\documentclass{amsart}

\theoremstyle{plain}
    \newtheorem{theorem}{Theorem}[section]
    \newtheorem{proposition}[theorem]{Proposition}
    \newtheorem{lemma}[theorem]{Lemma}
    \newtheorem{corollary}[theorem]{Corollary}
    \newtheorem{conjecture}[theorem]{Conjecture}
\theoremstyle{definition}
    \newtheorem{definition}[theorem]{Definition}
    
    \newtheorem{remark}[theorem]{Remark}

\usepackage{amsmath}
\usepackage{amssymb}
\usepackage{mathrsfs} 
\usepackage{times}
\usepackage{amscd}
\usepackage[all]{xy}

\begin{document}

\title[Regulator of Fermat motives]{On the regulator of Fermat motives and generalized hypergeometric functions}
\author{Noriyuki Otsubo} 
\address{Department of Mathematics and Informatics, Chiba University, 
Yayoicho 1-33, Inage, Chiba, 263-8522 Japan. }
\email{otsubo@math.s.chiba-u.ac.jp}
\date{September 3, 2009}
\subjclass{19F27, 33C65}
\begin{abstract}
We calculate the Beilinson regulators of motives associated to Fermat curves  
and express them by special values of generalized hypergeometric functions.  
As a result, we obtain surjectivity results of the regulator,     
which support the Beilinson conjecture on special values of $L$-functions.  
\end{abstract}

\maketitle

\numberwithin{equation}{section}
\renewcommand{\labelenumi}{\rm{(\roman{enumi})}}

\def\endpiece{xxx}
\def\mkop#1{\expandafter\def\csname #1\endcsname{\operatorname{#1}}}
\def\xmakeop#1,{\def\temp{#1}\ifx\temp\endpiece\else\mkop{#1}\expandafter\xmakeop\fi}
\def\makeop[#1]{\xmakeop#1,xxx,}
\makeop[Aut,alt,arg,End,Ext,Hom,sym,Mor,Ker,Im,Coker,Cone,id,Sm,Var,Gal,Spec,tor,div,CH,Pic,DM,alb,Corr,Chow,Ind,ord,rank,sgn,Tot,tr,Tr,Re,Res,Jac]
\def\mkmathrm#1{\expandafter\def\csname #1\endcsname{\mathrm{#1}}}
\def\xmakemathrm#1,{\def\temp{#1}\ifx\temp\endpiece\else\mkmathrm{#1}\expandafter\xmakemathrm\fi}
\def\makemathrm[#1]{\xmakemathrm#1,xxx,}
\makemathrm[Betti,dR,Hodge,pr,prim]
\def\et{\mathrm{\acute{e}t}}

\def\mkBbb#1{\expandafter\def\csname #1\endcsname{\mathbb{#1}}}
\def\xmakeBbb#1,{\def\temp{#1}\ifx\temp\endpiece\else\mkBbb{#1}\expandafter\xmakeBbb\fi}
\def\makeBbb[#1]{\xmakeBbb#1,xxx,}
\makeBbb[A,C,F,H,P,Q,R,Z]
\def\Gm{{\Bbb G}_{m}}

\def\a{\alpha}\def\b{\beta}\def\g{\gamma}\def\d{\delta}\def\pd{\partial}\def\io{\iota}
\def\l{\lambda}\def\om{\omega}\def\s{\sigma}
\def\t{\tau}\def\th{\theta}\def\x{\xi}\def\z{\zeta}\def\vphi{\varphi}
\def\Om{\Omega}\def\vG{\varGamma}\def\vL{\varLambda}\def\vO{\varOmega}
\def\nb{\nabla}

\def\ol#1{\overline{#1}}\def\ul#1{\underline{#1}}\def\wt#1{\widetilde{#1}}\def\wh#1{\widehat{#1}}
\def\os#1#2{\overset{#1}{#2}}\def\us#1#2{\underset{#1}{#2}}
\def\ot{\otimes}\def\op{\oplus}
\def\da{\downarrow}\def\ua{\uparrow}\def\ra{\rightarrow}\def\lra{\longrightarrow}
\def\isom{\os{\simeq}{\ra}}\def\lisom{\os{\simeq}{\lra}}

\def\sB{\mathscr{B}}\def\sD{\mathscr{D}}\def\sH{\mathscr{H}}
\def\sL{\mathscr{L}}\def\sM{\mathscr{M}}
\def\sO{\mathscr{O}}\def\sP{\mathcal{P}}
\def\sX{\mathscr{X}}\def\sY{\mathscr{Y}}\def\sV{\mathscr{V}}

\def\Mod{\operatorname{\mathsf{Mod}}}
\def\Fr{\mathrm{Fr}}
\def\El{{E_\ell}}
\def\angle#1{\langle #1 \rangle}
\def\Fp{{\F_p}}\def\Qp{{\Q_p}}
\def\Zl{{\Z_\ell}}\def\Ql{{\Q_\ell}}

\section{Introduction}

Fermat varieties have been touchstones for various conjectures in number theory and algebraic geometry. 
For example, they have provided evidences for the conjectures of Weil, Hasse-Weil, Hodge, Tate, Bloch, etc.  
Another most fascinating conjecture is the Beilinson conjecture \cite{beilinson} on special values of the $L$-functions of 
motives over number fields. 
It integrates former conjectures of Tate, Birch-Swinnerton-Dyer, Bloch and Deligne, 
and gives us a beautiful perspective on the mysterious but strong connections between the analysis 
($L$-function) and the geometry (cohomology) of motives. 
Unfortunately, we have only limited results on the conjecture and no general approach seems to be known. 
The aim of this paper is to study the conjecture for motives associated to Fermat curves. 

To an algebraic variety, or more generally to a motive $M$ over a number field, 
its $L$-function $L(M,s)$ is defined by a Dirichlet series convergent on a complex right half plane. 
Conjecturally in general, it has an analytic continuation to the whole complex plane and 
satisfies a functional equation with respect to $s \leftrightarrow 1-s$. 
The Beilinson conjecture explains the behavior of the $L$-function at an integer 
 in terms of the regulator map, that is, a canonical map
$$r_\sD \colon H_\sM^\bullet(M,\Q(r))_\Z \lra H_\sD^\bullet(M_\R,\R(r))$$
from the integral part of the motivic cohomology to the real Deligne cohomology 
(see \S 4 for definitions). 
The conjecture asserts, under an assumption on $r$, firstly that $r_\sD \ot_\Q \R$ is an isomorphism,  
which implies by the functional equation that  
$$\dim_\Q H_\sM^\bullet(M,\Q(r))_\Z = \ord_{s=1-r} L(M^\vee,s),$$
where $M^\vee$ is the dual motive. 
For an integer $n$, let $L^*(M,n)$ denote the first non-vanishing Taylor coefficient of $L(M,s)$ at $s=n$. 
Then the second assertion is that  
$$\det(r_\sD) = L^*(M^\vee,1-r)$$
in $\R^*/\Q^*$, 
where the determinant is taken with respect to a canonical $\Q$-structure on the Deligne cohomology.

For example, if $M=M^\vee=\Spec k$, the spectrum of a number field, 
then $L(M,s)$ is the Dedekind zeta function $\z_k(s)$. 
The regulator map for $r=1$ is the classical regulator map
$$\sO_k^* \ot_\Z \Q \lra \prod_{v | \infty }\R$$
given by the logarithms of the absolute values for infinite places $v$. 
In fact, to obtain the isomorphism in this case, we need to add $\Q$ on the left-hand side which maps diagonally.
Then the conjecture reduces to the classical unit theorem of Dirichlet and the class number formula. 
The generalization to all $r \geq 1$ is due to Borel. 

Let $X_N$ be the Fermat curve of degree $N$ over a number field $k$ defined by the homogeneous equation
$$x_0^N+y_0^N=z_0^N.$$
The regulator map we study is:  
$$r_\sD \colon H^2_{\sM}(X_N,\Q(2))_\Z \lra H^2_{\sD}(X_{N,\R},\R(2)).$$
According to the conjecture, the dimension of the motivic cohomology group is 
$[k:\Q]$ times $\mathrm{genus}(X_N)=(N-1)(N-2)/2$. 
An element of the motivic cohomology group is given by a Milnor $K_2$-symbol on the function field. 
In \cite{ross}, Ross showed that the regulator image of the element
$$e_N=\{1-x,1-y\} \in H^2_{\sM}(X_N,\Q(2))_\Z,$$
where $x=x_0/z_0$, $y=y_0/z_0$ are the affine coordinates, 
is non-trivial. 
There are also relevant studies of Ross \cite{ross-cr} and Kimura \cite{k-kimura} (see \S 4.10, \S4.12). 
 
The corresponding $L$-function is $L(h^1(X_N),s)$ at $s=0$ (or at $s=2$ by the functional equation). 
Suppose for simplicity that $k$ contains $\mu_N$, the group of $N$-th roots of unity. 
As was proved by Weil \cite{weil-numbers}, \cite{weil-jacobi}, 
the $L$-function decomposes into the Jacobi-sum Hecke $L$-functions (see \S3.5) as
$$L(h^1(X_N),s)=\prod_{(a,b) \in I_N} L(j_N^{a,b},s)$$ 
where $I_N=\{(a,b) \mid a,b \in \Z/N\Z, a,b,a+b \neq 0\}$, 
hence satisfies all the desired analytic properties. 

In the study of Fermat varieties, the decomposition in the category of motives with 
coefficients is extremely useful. 
The terminology ``Fermat motive" appeared in Shioda's paper \cite{shioda}, 
although the idea goes back to the paper of Weil \cite{weil-jacobi} (see \S 3.7). 
When $\mu_N \subset k$, the group $\mu_N \times \mu_N$ acts on $X_N$ 
and using this action, we have a decomposition 
$$h^1(X_N) = \bigoplus_{(a,b) \in I_N} X_N^{a,b}$$
in the category $\sM_{k,\Q(\mu_N)}$ of motives over $k$ with coefficients in $\Q(\mu_N)$ (see \S2.8), 
which corresponds to the above decomposition of the $L$-function (Theorem \ref{fermat-L}). 

By projecting $e_N$, we obtain an element $$e_N^{a,b} \in H_\sM^2(X_N^{a,b},\Q)_\Z$$ for each $(a,b) \in I_N$. 
Our main result Theorem \ref{main-theorem} calculates the image of $e_N^{a,b}$ 
under each $v$-component 
$r_{\sD,v}$ of $r_\sD$, where $v$ is an infinite place of $k$. 
Define a  hypergeometric function of two variables by
$$F(\a,\b; x, y) = \sum_{m,n \geq 0} \frac{(\a,m)(\b,n)}{(\a+\b+1,m+n)} x^m y^n$$
where $(\a,m) = \a(\a+1)\cdots (\a+m-1)$ is the Pochhammer symbol. 
This is a special case of Appell's $F_3$, one of his four hypergeometric functions of two variables 
\cite{appell}. 
Then the regulator is expressed by special values  
$$\wt{F}(\a,\b) := \frac{\vG(\a)\vG(\b)}{\vG(\a+\b+1)}F(\a,\b;1,1)$$
for $\a, \b \in \frac{1}{N}\Z$, at the point $(x,y)=(1,1)$ which lies on the boundary of the domain of convergence. 
Notice that the period of $X_N^{a,b}$ is essentially the Beta value $B(\tfrac{a}{N},\tfrac{b}{N})$, 
whose inverse is related to a value of Gauss' hypergeometric function as  
$$ F(\a,\b,\a+\b+1;1) = \frac{\a+\b}{\a\b} B(\a,\b)^{-1}.$$
The value $\wt{F}(\a,\b)$ can also be written by the value at $x=1$ of 
Barnes' generalized hypergeometric function ${}_3F_2$ of one variable (see \S 4.10). 
We shall show the non-vanishing of $r_{\sD,v}(e_N^{a,b})$ by using the integral representation of $F_3$ (see \S 4.9).  
Since each target $H_\sD^2(X_{N,v}^{a,b},\R(2))$ is one-dimensional, 
we obtain the surjectivity of $r_{\sD,v}$ for each $X_N^{a,b}$, hence for $X_N$ (Corollary \ref{corollary-1}). 

For a general number field $k$ not necessarily containing $\mu_N$, we also have a motivic decomposition
$$h^1(X_N) = \bigoplus_{[a,b]_k} X_N^{[a,b]_k},$$
where $[a,b]_k$ is the orbit of $(a,b)$ by the action of 
$\Gal(k(\mu_N)/k) \subset (\Z/N\Z)^*$.  
The study of $X_N^{[a,b]_k}$ is essentially the same as that of $X_N^{a,b}$
and  we can also derive results for $X_N^{[a,b]_k}$.  

If $N=3$, $4$, $6$ and $k\subset\Q(\mu_N)$, then there is only one infinite place and we obtain 
the surjectivity of the whole $r_\sD$ (Corollary \ref{corollary-3}). 
In general, however, we do not have enough elements for the Beilinson conjecture.  
An attempt is to use the action of the symmetric group of degree $3$ acting on $X_N$ as permutations of the coordinates.  
In this way, we obtain at most $3$ elements from each $e_N^{a,b}$, and we shall show that 
they are actually enough for the surjectivity of the whole $r_{\sD}$ if $N=5, 7$ and $k\subset\Q(\mu_N)$,  
with a restriction on $(a,b)$ when $N=7$ (Theorem \ref{N=5}, Proposition \ref{N=7}). 

This paper is constructed as follows. 
In \S 2, we first recall briefly the necessary materials on motives and fix our notations. 
Then we define motives $X_N^{a,b}$, $X_N^{[a,b]_k}$ associated to Fermat curves 
and study the relations among them.  
In \S 3, after recalling the definition of the $L$-function of a motive with coefficients, 
we calculate the $L$-functions of our motives and derive basic properties.  
At the end, we compare our $L$-functions with the Artin $L$-functions of Weil. 
Finally in \S 4, we first recall the Beilinson regulator and the Beilinson conjecture for motives with coefficients. 
Then we define elements in the motivic cohomology groups and study the Deligne cohomology groups 
of our motives.   
The main results are stated in \S 4.7 and proved in \S 4.8, \S4.9 after introducing Appell's hypergeometric function.  
In \S 4.10, we introduce Barnes' hypergeometric function and discuss some variants. 
At the end, we calculate the action of the symmetric group and give applications for $N=5$ and $7$.   

\medskip
A part of this work was done when the author was visiting l'Institut de Math\'ematiques de Jussieu from 2004 to 2006, 
supported by the JSPS Postdoctoral Fellowships for Research Abroad. I would like to thank them for their  
hospitality and support. 
I would like to thank sincerely Bruno Kahn for his hospitality and enlightening discussions. 
Finally, I am grateful to Seidai Yasuda for valuable discussions.

\section{Fermat Motives}

\subsection{Motives}

We recall briefly the definition of the category of pure motives modulo rational equivalences 
(``Chow motives"). For more details, see \cite{scholl} and its references. 

For a field $k$, let $\sV_k$ be the category of smooth projective $k$-schemes. 
For $X \in \sV_k$ and a non-negative integer $n$, let $\CH^n(X)$ (resp. $\CH_n(X)$) 
be the Chow group 
of codimension-$n$ (resp. dimension-$n$) algebraic cycles on $X$ modulo rational equivalences. 
For example, $\CH^1(X)$ is the Picard group.  
Recall that for a flat (resp. proper) morphism 
$f \colon X \ra Y$, we have the pull-back (resp. push-forward) map 
$$f^*\colon \CH^n(Y)\lra \CH^n(X),\quad 
f_* \colon \CH_n(X) \lra \CH_n(Y).$$
If $f$ is flat and finite of degree $d$, we have $f_* \circ f^* = d$. 
In particular, $f^*$ (resp. $f_*$) is injective (resp. surjective) modulo torsion. 

For $X$, $Y \in \sV_k$, the group of {\em correspondences} of degree $r$ 
from $X$ to $Y$ is defined by
\begin{equation*}\begin{split}
\Corr^r(X,Y) & = \bigoplus_i \Q \ot_\Z \CH^{\dim X_i +r}(X_i \times Y)\\
& = \bigoplus_j \Q \ot_\Z \CH_{\dim Y_j -r}(X \times Y_j)
\end{split}\end{equation*}
where $X_i$ (resp. $Y_j$) are the irreducible components of $X$ (resp. $Y$). 
For a morphism $f\colon Y \ra X$, let $\vG_f \subset X \times Y$ be the transpose of its graph. 
Then it defines an element of $\Corr^0(X,Y)$.   
The composition of correspondences 
$$\Corr^r(X,Y) \ot \Corr^s(Y,Z) \lra \Corr^{r+s}(X,Z)$$ 
is defined by 
$$f \ot g \longmapsto g \circ f = {\pr_{XZ}}_*(\pr_{XY}^*(f) \cdot \pr_{YZ}^*(g)),$$
where $\pr_{**}$ is the projection from $X \times Y \times Z$ to the indicated factors, and $\cdot$ is the intersection product. In particular, we have 
$$\vG_f \circ \vG_g = \vG_{g \circ f}.$$ 
The class of the diagonal $\Delta_X = \vG_{\id_X}$ is the identity for the composition. 

The category $\sM_k = \sM_{k,\Q}$ of {\em motives over $k$ with $\Q$-coefficients} is defined as follows. 
The objects are triples $(X,p,m)$ where $X \in \sV_k$, 
$m \in \Z$, and 
$p \in \Corr^0(X,X)$ is an idempotent, that is, $p \circ p = p$. The morphisms are defined by 
\begin{equation*}
\Hom_{\sM_k}((X,p,m),(Y,q,n)) = q \circ \Corr^{n-m}(X,Y) \circ p.
\end{equation*}
We simply write $(X,p)$ instead of $(X,p,0)$, and $h(X)$ instead of $(X,\varDelta_X)$. 
Then, $h$ defines a {\em contravariant} functor 
\begin{equation*}
h \colon \sV_k^\mathrm{opp} \lra \sM_k; \quad X \longmapsto h(X), \ f \longmapsto \vG_f.
\end{equation*}
For a field extension $E$ of $\Q$, the category $\sM_{k,E}$ of {\em motives with $E$-coefficients}  is defined: it 
has the same objects as $\sM_k$, and the morphisms 
$$\Hom_{\sM_{k,E}}(M,N) = E \ot_\Q \Hom_{\sM_k}(M,N).$$
When $E \subset E'$, we regard $\sM_{k,E}$ as a subcategory of $\sM_{k,E'}$.  

The category $\sM_{k,E}$ is an additive $E$-linear category with the direct sum   
$$(X,p,m) \oplus (Y,q,n) = (X \sqcup Y, p \oplus q, m+n),$$ 
and the zero object $\mathbf{0}=h(\phi)$. 
It is a peudo-abelian category, that is, 
every projector (i.e. $f \in \End_{\sM_{k,E}}(M)$ such that $f \circ f =f$) has an image. 
For example, 
$$(X,p)=\Im(p\colon h(X) \ra h(X)).$$
On $\sM_{k,E}$, there exists a natural tensor product $\ot$ such that 
$$h(X) \ot h(Y) = h(X \times Y).$$
The identity for $\ot$ is the {\em unit motive}
$\mathbf{1} = h(\Spec k)$. 
The {\em Lefschetz motive} is defined by 
$\mathbf{L} = h(\Spec k, \varDelta_X,-1)$.
Then we have 
$$(X,p,m) \ot \mathbf{L}^{\ot n} = (X,p,m-n).$$
For a motive $M=(X,p,m)$ with $\dim X =d$, its {\em dual motive} 
is defined by  
\begin{equation*}
M^\vee = (X,{}^t\! p,d-m)
\end{equation*}
where ${}^t\! p$ is the transpose of $p$.  
For an integer $r$, the $r$-th {\em Tate twist} of $M$ is defined by
$$M(r)= (X,p,m+r) = M \ot \mathbf{L}^{\ot(-r)}.$$

For a morphism $f \colon X \ra Y$, we have the pull-back  
\begin{equation*}
f^* := \vG_f \colon h(Y) \lra h(X).
\end{equation*}
On the other hand, for irreducible $X$ and $Y$, 
we have the push-forward  
\begin{equation*}
f_* := \,^t \vG_f\colon h(X) \lra h(Y)(\dim Y - \dim X). 
\end{equation*}
Suppose that $X$, $X'$, $Y$, $Y' \in \sV_k$ are irreducible, and let $f \in \Corr^r(X,Y)$. Then, 
for morphisms $\a \colon X' \ra X$, $\b \colon Y' \ra Y$, we have
\begin{equation}\label{corr-pull}
\b^*  \circ f \circ \a_* =(\a \times \b)^* f, 
\end{equation}
and for morphisms $\a\colon X \ra X'$, $\b \colon Y \ra Y'$, we have
\begin{equation}\label{corr-push}
\b_*  \circ f \circ \a^* =(\a \times \b)_* f. 
\end{equation}
If $f\colon X \ra Y$ is a finite morphism of degree $d$, 
we have 
$f_* \colon h(X) \ra h(Y)$, and the formulae \eqref{corr-pull}, \eqref{corr-push} lead to: 
\begin{equation*}
f^* \circ f_* = [X \times_Y X]  \ \in \End(h(X))
\end{equation*}
and
\begin{equation*}
f_* \circ f^* = d [\Delta_Y] \ \in \End(h(Y)).
\end{equation*}
In particular, $f^*$ (resp. $f_*$) is injective (resp. surjective). 

\subsection{Motives of curves}

In the case of curves, we have the so-called Chow-K\"unneth decomposition, which is still conjectural in general. 

Let $f \colon X \ra \Spec k$ be a smooth irreducible projective curve, 
and suppose for simplicity that it has a $k$-rational point $x$. 
Define correspondences $e^i \in \Corr^0(X,X)$ by  
\begin{equation*}
e^0 = \{x\} \times X, \quad e^2 = X \times \{x\}, \quad e^1= \varDelta_X - e^0 -e^2.
\end{equation*}
One sees easily that $e^0$, $e^2$ are idempotents and that $e^0 \circ e^2 = e^2 \circ e^0 =0$, 
hence $e^1$ is also an idempotent orthogonal to $e^0$ and $e^2$. 
The $i$-th cohomological motive of $X$ defined by 
\begin{equation*}
h^i(X) =(X,e^i).
\end{equation*}
By definition, we have a decomposition (depending on the choice of $x$)  
$$h(X) \simeq h^{0}(X)\oplus h^{1}(X) \oplus h^{2}(X).$$

Since $e^0=f^* \circ x^*$, the map $f^*\colon \mathbf{1} \ra h(X)$ induces an isomorphism 
$\mathbf{1} \os{\simeq}{\lra} h^0(X)$ whose inverse is given by $x^*$. 
Similarly, since $e^2=x_* \circ f_*$, the map $f_*\colon h(X) \ra \mathbf{L}$ induces an isomorphism 
$h^2(X) \os{\simeq}{\lra} \mathbf{L}$ whose inverse is given by $x_*$. 
If $X = \P^1$, we have $h(\P^1) \simeq \mathbf{1} \oplus \mathbf{L}$. 

\subsection{Functorialities}

Let $K/k$ be a field extension, and 
$$\varphi_{K/k}\colon \Spec K \lra \Spec k$$
be the structure morphism. 
Then we have the ``scalar extension"  functor
\begin{equation*}
\sV_k \lra \sV_K; \quad X \longmapsto X_K := X \times_k  K. 
\end{equation*}
The pull-back on the Chow group $\CH^*(X \times_k Y) \ra \CH^*(X_K \times_K Y_K)$ 
defines a homomorphism 
$$\Corr^r(X,Y) \lra \Corr^r(X_K,Y_K); \quad f \longmapsto f_K,$$
which is injective. Therefore, the above functor extends to a faithful functor 
\begin{equation*}
{\varphi_{K/k}^*}\colon \sM_{k,E} \lra \sM_{K,E}; \quad (X,p,m) \longmapsto (X_K, p_K,m)
\end{equation*}

On the other hand, for a finite separable extension $K/k$,  
we have Grothendieck's ``scalar restriction" functor 
\begin{equation*}
\sV_K \lra \sV_k
\end{equation*}
which sends $X \ra \Spec K$ to the composite 
$$X_{|k} := X \ra \Spec K \ra \Spec k.$$
The push-forward
$\CH^*(X \times_K Y) \ra \CH^*(X_{|k} \times_k Y_{|k})$
defines a homomorphsim  
$$\Corr^r(X,Y) \lra \Corr^r(X_{|k},Y_{|k}); \quad f \longmapsto f_{|k}$$
and induces a functor
\begin{equation*}
\varphi_{K/k *}\colon \sM_{K,E} \lra \sM_{k,E}; \quad (X,p,m) \longmapsto (X_{|k},p_{|k},m),
\end{equation*}
which is left and right adjoint to $\vphi_{K/k}^*$. 

\subsection{Fermat curves}
Let $k$ be a field and $N$ be a positive integer prime to $\mathrm{char}(k)$. 
Let $X_N=X_{N,k}$ be the smooth projective curve over $k$ 
defined by the homogeneous equation 
\begin{equation}\label{equation-fermat} 
x_{0}^N+y_{0}^N=z_{0}^N.
\end{equation}
It has genus $(N-1)(N-2)/2$. 
Define a closed subscheme by 
$$Z_N = X_N \cap \{z_0=0\}$$ 
and let $U_N = X_N - Z_N$ be the open complement. 
The affine equation is written as 
\begin{equation*}
x^{N}+y^{N}=1 \quad (x=x_{0}/z_{0}, y=y_{0}/z_{0}).
\end{equation*}

If $N'$ divides $N$, with $N=N'd$, we have a $k$-morphism
\begin{equation*}
\pi_{N/N',k} \colon X_{N} \lra X_{N'}; \quad 
(x_{0}:y_{0}:z_{0}) \longmapsto (x^{d}_{0}:y^{d}_{0}:z^{d}_{0}) 
\end{equation*}
which is finite of degree $d^2$. It respecs $Z_*$, $U_*$, and \'etale over $U_{N'}$. 
On the other hand, for a field extention $K/k$, we have a canonical morphism
$$\pi_{N,K/k} \colon X_{N,K} \lra X_{N,k}. $$
We denote the composition as 
$$\pi_{N/N',K/k} \colon X_{N,K} \lra X_{N',k}.$$
By the evident relation 
$\pi_{N'/N'', K/k} \circ \pi_{N/N',L/K} =\pi_{N/N'', L/k}$, 
the curves $X_{N,k}$ for various $N$ and $k$ form a projective system. 

\subsection{Group actions}

Fix an algebraic closure $\ol k$ of $k$.  For $N$ prime to $\mathrm{char}(k)$, put 
\begin{equation*}
K_N=k(\mu_N).
\end{equation*}
For each $N$, fix a primitive $N$-th root of unity $\zeta_N \in K_N$ in such a way that 
$\z_{Nd}^d=\z_N$. 

Define finite groups by 
\begin{equation*}
G_N=\Z/N\Z \oplus \Z/N\Z, \quad H_N=(\Z/N\Z)^*. 
\end{equation*}
Then, $H_N$ acts (from the left) on $G_N$ by the multiplication on both factors  
and we put
\begin{equation*}
\vG_N=G_N \rtimes H_N.
\end{equation*}
We denote an element $(r,s) \in G_N$ also by $g_N^{r,s}$, and write the addition multiplicatively, i.e. 
$$g_N^{r,s} g_N^{r',s'} = g_N^{r+r',s+s'}.$$  
Let $H_{N,k} \subset H_N$ be the image of the injective homomorphism
$\Gal(K_N/k) \ra H_N$
which maps $\sigma$ to the unique element $h$ such that $\sigma(\zeta_N)=\zeta_N^h$. 
Finally, define a subgroup of $\vG_N$ by
\begin{equation*}
\vG_{N,k}= G_{N} \rtimes H_{N,k}. 
\end{equation*}

Now we define an action of $\vG_{N,k}$ on $X_{N,K_N}$. 
Throughout this paper, we let groups act on schemes from the right, so that they induce right actions 
on rational points, homology groups, etc. and left actions on functions, differential forms, 
cohomology groups, etc. 

First, let $G_N$ act on $X_{K_N}$ by 
\begin{equation*}
g_N^{r,s}(x_0:y_0:z_0) = (\z_N^r x_0: \z_N^s y_0:z_0).
\end{equation*}
Secondly, the action of $H_{N,k}=\Gal(K_N/k)$ on $K_N$ induces an action on $\Spec K_N$,   
hence, by the base-change, an action on $X_{N',K_N}$ for any $N'$. 
Finally, since the above actions satisfy 
$hg = h(g)h$ for any $g \in G_N$, $h \in H_{N,k}$, 
they extend to an action of $\vG_{N,k}$ on $X_{N,K_N}$. 
Summarizing, we have the following commutative diagram with the indicated automorphism groups: 
\begin{equation}\label{diagram-1}
\xymatrix{
X_{N,K_N} \ar[rr]^{G_N} \ar[d]_{H_{N,k}} \ar[rrd]^{\vG_{N,k}} && X_{1,K_N} \ar[d]^{H_{N,k}}\\
X_{N,k}\ar[rr] && X_{1,k}.
}\end{equation}

For $N'|N$, the canonical surjective homomorphisms
$$G_{N} \lra G_{N'} , \quad H_{N,k} \lra H_{N',k}, \quad \vG_{N,k} \lra \vG_{N',k}$$ 
are compatible with the morphisms 
$$\pi_{N/N',K_N}, \quad \pi_{N,K_{N}/K_{N'}}, \quad \pi_{K_{N}/K_{N'},N/N'},$$
respectively. 
We shall use the notation 
$$G_{N/N'} := \Ker(G_{N} \ra G_{N'}) =\Aut(X_{N,K_N}/X_{N',K_N}).$$
 
\subsection{Index sets}

We say that an element $(a,b) \in G_N$ is {\em primitive} if $\gcd(N,a,b)=1$, and 
let $G_N^\prim \subset G_N$ be the subset of primitive elements. 
If we put
$$d=\gcd(N,a,b), \quad N'=N/d, \quad a'=a/d, \quad b'=b/d, $$
then $(a',b') \in G_{N'}^\prim$. 
For $(a,b) \in G_{N}$, let $[a,b]_{k}$ denote its $H_{N,k}$-orbit. 
Then the map 
$$G_{N'} \lra G_N ; \quad (a',b') \mapsto (a'd,b'd)$$ 
induces bijections
$$G_N \simeq \bigsqcup_{N' | N} G_{N'}^\prim, \quad
H_{N,k}\backslash G_N \simeq \bigsqcup_{N' \mid N} H_{N',k}\backslash G_{N'}^\prim. $$
Since it induces a bijection 
$[a',b']_k \os{\simeq}{\ra}  [a,b]_k$, we have
$$\sharp[a,b]_{k}=\sharp[a',b']_{k}= \sharp H_{N',k}=[K_{N'}:k].$$

Define a subset of $G_N$ by  
$$I_N=\left\{(a,b) \in G_{N}  \mid a, b, a+b \neq 0\right\} 
$$
and put $I_N^\prim = I_N \cap G_N^\prim$. 
Then, $I_N$ and $I_N^\prim$ are stable under the action of $H_{N,k}$. 
Note that  
$\sharp I_N = (N-1)(N-2)$, twice the genus of $X_N$.  
We have also bijections
$$I_N \simeq \bigsqcup_{N' \mid N} I_{N'}^\prim,  \quad 
H_{N,k}\backslash I_N \simeq \bigsqcup_{N' \mid N} H_{N',k}\backslash I_{N'}^\prim. $$

\subsection{Projectors}

For an integer $N$, put  
\begin{equation*}
E_N=\Q(\mu_N) \subset \ol\Q 
\end{equation*}
and for each $N$, fix a primitive $N$-th root of unity $\xi_N \in \ol \Q$ in such a way that 
$\xi_{Nd}^d=\xi_N$.  

\begin{definition}
For $(a,b) \in G_{N}$, let $\theta_N^{a,b} \colon G_N \ra E_N^*$ be the character  defined by 
\begin{equation*}
\theta_{N}^{a,b}(g_N^{r,s}) = \xi_{N}^{ar+bs}. 
\end{equation*}
Any character of $G_N$ is uniquely written in this form.
If $N=N'd$, $a=a'd$ and $b=b'd$, then $\theta_N^{a,b}$ is the pull-back of $\theta_{N'}^{a',b'}$ by 
the natural homomorphism $G_N \ra G_{N'}$. 
\end{definition}

\begin{definition}
For $(a,b) \in G_N$, define an element of the group ring $E_N[G_N]$ by  
\begin{equation*}
p_N^{a,b} = \frac{1}{N^2} \sum_{g \in G_N} \theta_N^{a,b}(g^{-1}) g.
\end{equation*}
Evidently, 
\begin{equation}\label{property-p^{a,b}}
\sum_{(a,b)\in G_{N}}p_{N}^{a,b} =1, \quad
p_{N}^{a,b} p_{N}^{c,d} = \begin{cases} 
	p_{N}^{a,b} & \text{if $(a,b)=(c,d)$,} \\
	0 & \text{otherwise.} \end{cases} \\ 
\end{equation}
\end{definition}

\begin{definition}
For a class $[a,b]_k \in H_{N,k} \backslash G_N $, define an element of $E_N[G_{N}]$ by  
\begin{equation*}
p_{N}^{[a,b]_k} = \sum_{(c,d) \in [a,b]_k} p_{N}^{c,d}. 
\end{equation*}
Then we have 
\begin{equation}\label{property-p^{[a,b]}}
\sum_{[a,b]_k\in H_{N,k} \backslash G_N} p_{N}^{[a,b]_k}  =1, \quad
p_{N}^{[a,b]_k} p_{N}^{[c,d]_k}   = \begin{cases} 
	p_{N}^{[a,b]_k} &  \text{if $[a,b]_k=[c,d]_k$,} \\
	0 & \text{otherwise.} \end{cases} \
\end{equation}
\end{definition}

It is easy to prove: 
\begin{lemma}\label{projector-level-change} 
Let $N=N'd$. Under the natural homomorphism 
$E_N[G_N] \ra E_N[G_{N'}]$, 
\begin{enumerate}
\item 
$p_N^{a,b} \longmapsto\begin{cases}
p_{N'}^{a',b'} & \text{if $(a,b)=(a'd,b'd)$ for some $(a',b') \in G_{N'}$},\\
0 & \text{otherwise}, 
\end{cases}
$
\item 
$p_N^{[a,b]_k} \longmapsto 
\begin{cases}
p_{N'}^{[a',b']_k} & 
\text{if $[a,b]_k=[a'd,b'd]_k$ for some $[a',b']_k\in H_{N,k}\backslash G_{N'}$},\\
0 & \text{otherwise}.
\end{cases}
$
\end{enumerate}
\end{lemma}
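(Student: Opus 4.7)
The plan is to prove (i) by a direct character-orthogonality computation on the defining formula of $p_N^{a,b}$, and then deduce (ii) by summing over the orbit. Write $N = N'd$ and let $\pi\colon G_N \twoheadrightarrow G_{N'}$ denote the natural surjection, whose kernel has order $d^2$. I would parametrize each fiber $\pi^{-1}(g_{N'}^{u,v})$ by the elements $g_N^{u+N'i,\,v+N'j}$ for $0 \le i, j < d$ and rewrite
$$\pi(p_N^{a,b}) = \frac{1}{N^2}\sum_{h \in G_{N'}}\Biggl(\sum_{g \in \pi^{-1}(h)} \theta_N^{a,b}(g^{-1})\Biggr)\, h.$$

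The inner sum factors as $\xi_N^{-au-bv}\bigl(\sum_{i=0}^{d-1}(\xi_N^{N'})^{-ai}\bigr)\bigl(\sum_{j=0}^{d-1}(\xi_N^{N'})^{-bj}\bigr)$. By the chosen normalization one has $\xi_N^d = \xi_{N'}$, and so $\xi_N^{N'}$ is a primitive $d$-th root of unity; character orthogonality on $\Z/d\Z$ therefore makes each geometric sum equal to $d$ when $d$ divides the exponent and $0$ otherwise. When $d \mid a$ and $d \mid b$, writing $a = a'd$ and $b = b'd$, the prefactor becomes $\xi_N^{-a'du - b'dv} = \xi_{N'}^{-a'u - b'v} = \theta_{N'}^{a',b'}((g_{N'}^{u,v})^{-1})$, and using $N^2 = d^2 (N')^2$ this collapses to $\pi(p_N^{a,b}) = p_{N'}^{a',b'}$. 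This is precisely (i).

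To deduce (ii), I would sum (i) over $[a,b]_k$, observing first that the divisibility conditions $d \mid a$, $d \mid b$ are preserved by the $H_{N,k}$-action by units modulo $N$, so either every term in the expansion of $p_N^{[a,b]_k}$ hits the nonvanishing case of (i) or none does. In the nonvanishing case, it remains to check that $(c,e) \mapsto (c/d,\,e/d)$ induces a bijection $[a,b]_k \to [a',b']_k$ of orbits. This is the only point requiring care: although $H_{N,k} \twoheadrightarrow H_{N',k}$ may have nontrivial kernel, any $h \equiv 1 \pmod{N'}$ satisfies $(h-1)c \equiv (h-1)e \equiv 0 \pmod N$ whenever $d \mid c$ and $d \mid e$, since $N' \cdot d = N$. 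Hence $H_{N,k}$ acts on the divisible-by-$d$ subset, identified with $G_{N'}$, through $H_{N',k}$ and coincides with the natural $H_{N',k}$-action on $G_{N'}$; the orbits therefore correspond bijectively, and (ii) follows termwise from (i).
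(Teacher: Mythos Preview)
Your proof is correct; the paper itself omits the argument entirely, stating only that the lemma is easy to prove. Your direct fiberwise computation for (i) and the orbit-bijection argument for (ii) are exactly the natural verifications one would expect, and the care you take with the kernel of $H_{N,k}\to H_{N',k}$ acting trivially on the $d$-divisible subset is the one nontrivial point.
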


\begin{definition}
Let $E_{N,k}$ be the subfield of $E_N$ fixed by $H_{N,k}$ viewed as a subgroup of $\Gal(E_N/\Q) \simeq H_N$.  
\end{definition}

Extend by linearity the action of $H_{N,k}$ on $G_N$ to the group ring 
$E_N[G_N]$ (notice: $H_{N,k}$ does not act on $E_N$). 
\begin{lemma}\label{projector-smaller}
For any $(a,b) \in G_N$ and $h \in H_{N,k}$, we have{\rm :} 
\begin{enumerate}
\item $h(p_N^{a,b})=p_N^{h^{-1}a,h^{-1}b}$, 
\item $h(p_N^{[a,b]_k})=p_N^{[a,b]_k}$, 
\item $p_N^{[a,b]_k}\in E_{N,k}[G_N]$.  
\end{enumerate}
\end{lemma}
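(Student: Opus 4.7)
The plan is to derive all three parts from direct character-sum manipulations, after carefully distinguishing the two different actions of $H_{N,k}$ involved: its action on $G_N$ (used in (i) and (ii), coming from multiplication in both factors) and its Galois action on $E_N$ (needed in (iii), given by $\sigma_h(\xi_N)=\xi_N^h$ under the identification $H_{N,k}\subset H_N\simeq\Gal(E_N/\Q)$).

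For (i), I would apply $h$ to the defining sum $p_N^{a,b}=\frac{1}{N^2}\sum_{g\in G_N}\theta_N^{a,b}(g^{-1})g$ and reindex by $g'=h(g)=h\cdot g$. Since $h$ is a bijection of $G_N$, this rewrites the expression as $\frac{1}{N^2}\sum_{g'}\theta_N^{a,b}(h^{-1}(g')^{-1})g'$. Writing $g'=g_N^{r,s}$ and using that $h^{-1}$ acts by multiplication on both factors, one checks $\theta_N^{a,b}(g_N^{h^{-1}r,h^{-1}s})=\xi_N^{ah^{-1}r+bh^{-1}s}=\theta_N^{h^{-1}a,h^{-1}b}(g_N^{r,s})$, giving the claim. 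Part (ii) is then immediate: applying $h$ to $p_N^{[a,b]_k}=\sum_{(c,d)\in[a,b]_k}p_N^{c,d}$ and using (i) gives $\sum_{(c,d)\in[a,b]_k}p_N^{h^{-1}c,h^{-1}d}$, and since $h^{-1}\in H_{N,k}$ permutes the $H_{N,k}$-orbit $[a,b]_k$, the sum is unchanged.

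For (iii), I would show that $p_N^{[a,b]_k}$ is fixed under the Galois action of $H_{N,k}=\Gal(E_N/E_{N,k})$ on coefficients in $E_N[G_N]$, so that its coefficients lie in the fixed field $E_{N,k}$. A computation parallel to (i) — but with $h$ acting on the character values $\xi_N^{-ar-bs}$ rather than on the group element — yields $\sigma_h(p_N^{a,b})=p_N^{ha,hb}$: indeed, $\sigma_h(\theta_N^{a,b}(g_N^{-r,-s}))=\xi_N^{-h(ar+bs)}=\theta_N^{ha,hb}(g_N^{-r,-s})$. Summing over the orbit as in (ii), $\sigma_h(p_N^{[a,b]_k})=\sum_{(c,d)\in[a,b]_k}p_N^{hc,hd}=p_N^{[a,b]_k}$, so each coefficient of $p_N^{[a,b]_k}$ is $H_{N,k}$-invariant.

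There is no real obstacle; the argument is a bookkeeping exercise in character sums. The one point requiring care is to keep the two $H_{N,k}$-actions conceptually distinct in (iii), and to observe that although (i) and the calculation in (iii) produce formally similar identities ($p_N^{h^{-1}a,h^{-1}b}$ versus $p_N^{ha,hb}$), they reflect different structures (action on $G_N$ versus Galois action on $E_N$); both nonetheless permute the set of projectors indexed by an $H_{N,k}$-orbit, which is what makes $p_N^{[a,b]_k}$ doubly invariant.
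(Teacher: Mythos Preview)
Your argument is correct and matches the paper's approach for (i) and (ii). For (iii), the paper packages the same computation slightly differently: instead of verifying Galois invariance, it rewrites $p_N^{[a,b]_k}=\frac{\sharp[a,b]_k}{\sharp H_{N,k}}\sum_{h\in H_{N,k}}p_N^{ha,hb}$ and identifies the coefficient of each $g$ as $\Tr_{E_N/E_{N,k}}(\theta_N^{a,b}(g^{-1}))$. This uses exactly your identity $\sigma_h(\theta_N^{a,b}(g^{-1}))=\theta_N^{ha,hb}(g^{-1})$, so the two arguments are the same underneath; the trace formulation has the minor bonus of naming the coefficients explicitly, while your invariance check is perhaps cleaner conceptually.
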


\begin{proof}
(i) is easy and (ii) follows from (i). (iii) follows from 
$$
p_N^{[a,b]_k} 
= \frac{\sharp[a,b]_k}{\sharp H_{N,k}} \sum_{h \in H_{N,k}} p_N^{ha,hb} =\frac{\sharp[a,b]_k}{\sharp H_{N,k}} \frac{1}{N^2}
 \sum_{g \in G_N} \Tr_{E_N/E_{N,k}}(\theta_N^{a,b}(g^{-1})) g. 
$$
\end{proof}

\begin{remark}
In fact, $p_N^{[a,b]_k}\in E_{N',k}[G_N]$ where $N'=\mathrm{gcd}(N,a,b)$.
\end{remark}

\subsection{Fermat motives}

As a base point, we choose  
$$x=(0 \colon 1 \colon1) \  \in X_N(k)$$ 
so that it is compatible under the morphisms $\pi_{N/N',K/K'}$ for various degrees and base fields. 

The action of $G_N$ on $X_{N,K_N}$ induces an action (from the left) 
on $h(X_{N,K_N})$, and by linearity we obtain an $E_N$-algebra homomorphism
$$E_N[G_N] \lra \End_{\sM_{K_N,E_N}}(h(X_{N,K_N})). $$
By abuse of notation, the image of an element of the group ring will be denoted by the same letter. 
For example, we just denote by $g$ instead of $g^*$ or $\vG_g$. 

\begin{definition}
For $(a,b) \in G_N$, define{\rm :} 
$$X_{N}^{a,b} = (X_{N,K_N}, p_{N}^{a,b})\ \in \sM_{K_N,E_N}.$$
Then, by \eqref{property-p^{a,b}}, we have a decomposition 
\begin{equation*}
h(X_{N,K_N}) \simeq  \bigoplus_{(a,b)\in G_{N}} X_{N}^{a,b}. 
\end{equation*}
\end{definition}

\begin{proposition}\label{decomposition-X_K}
We have isomorphisms in  $\sM_{K_N,E_N}${\rm :}
\begin{enumerate}
	\item $X_N^{0,0} \simeq \mathbf{1} \oplus \mathbf{L}$, 
	\item $X_N^{a,b} \simeq {\bf 0}$ \/ if only one of $a$, $b$, $a+b$ is \/ $0$,  
	\item $h^1(X_{N,K_N}) \simeq \bigoplus_{(a,b) \in I_N} X_N^{a,b}$. 
\end{enumerate}
\end{proposition}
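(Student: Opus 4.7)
All three parts exploit the quotient maps $X_{N,K_N}\to X_{N,K_N}/H\simeq\P^1$ for suitable subgroups $H\subset G_N$. For (i), the Galois cover $\pi:=\pi_{N/1,K_N}\colon X_{N,K_N}\to X_{1,K_N}\simeq\P^1$ has degree $N^2$ and group $G_N$, so $\pi_*\pi^*=N^2\cdot\id_{h(\P^1)}$ and $\pi^*\pi_*=[X_{N,K_N}\times_{\P^1}X_{N,K_N}]=\sum_{g\in G_N}g=N^2p_N^{0,0}$; thus $\pi^*$ and $N^{-2}\pi_*$ induce mutually inverse isomorphisms $h(\P^1_{K_N})\simeq X_N^{0,0}$, giving $X_N^{0,0}\simeq\mathbf{1}\oplus\mathbf{L}$. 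For (ii), in each of the three cases I choose a subgroup on which $\theta_N^{a,b}$ is trivial, with quotient $\simeq\P^1$: namely $H=\langle g_N^{0,1}\rangle,\,\langle g_N^{1,0}\rangle$, or $\langle g_N^{1,1}\rangle$ according as $b=0$, $a=0$, or $a+b=0$, with respective quotient maps $(x_0\!:\!y_0\!:\!z_0)\mapsto(x_0\!:\!z_0),\,(y_0\!:\!z_0),\,(x_0\!:\!y_0)$, each of degree $N$. The argument of (i) applied to such a quotient $\phi\colon X_{N,K_N}\to\P^1$ realizes $X_N^{a,b}$ as a summand of $h(\P^1)\simeq\mathbf{1}\oplus\mathbf{L}$ (using $p_N^{a,b}e_H=p_N^{a,b}$ where $e_H=N^{-1}\sum_{h\in H}h=N^{-1}\phi^*\phi_*$). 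Since every automorphism of $\P^1$ acts trivially on $\End(\mathbf{1})\oplus\End(\mathbf{L})=\Q\oplus\Q$, the induced projector on $h(\P^1)$ is the scalar $N^{-2}\sum_g\theta_N^{a,b}(g^{-1})=0$, giving $X_N^{a,b}=\mathbf{0}$.

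\textbf{Plan for (iii).} The group-ring identity $\sum_{(a,b)\in G_N}p_N^{a,b}=1$ yields $\varDelta_{X_N}=\sum_{(a,b)}p_N^{a,b}$ in $\End(h(X_{N,K_N}))$. Combined with $\varDelta=e^0+e^1+e^2$ and the vanishing from (ii), the proposition reduces to the cycle-level identity $p_N^{0,0}=e^0+e^2$ in $\CH^1(X_{N,K_N}\times X_{N,K_N})\otimes\Q$. Pulling back $\varDelta_{\P^1}=e^0_{\P^1}+e^2_{\P^1}$ via $\pi\times\pi$, and accounting for the ramification of $\pi$ over $\pi(x)\in\P^1$ (index $N$ at each of the $N$ geometric preimages), gives
$$N^2p_N^{0,0}=\pi^*\pi_*=(\pi\times\pi)^*\varDelta_{\P^1}=N[\{x_0=0\}]\times X_{N,K_N}+N\,X_{N,K_N}\times[\{x_0=0\}].$$
The crucial geometric input is that $x=(0\!:\!1\!:\!1)$ is a flex of $X_N$: its tangent line $\{y_0=z_0\}$ meets $X_N$ only at $x$, with multiplicity $N$. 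The rational function $(y_0-z_0)/x_0$ on $X_{N,K_N}$ then has divisor $N[x]-[\{x_0=0\}]$, whence $[\{x_0=0\}]=N[x]$ in $\CH^1(X_{N,K_N})$. Substituting yields $p_N^{0,0}=[x]\times X_{N,K_N}+X_{N,K_N}\times[x]=e^0+e^2$, so $e^1=\sum_{(a,b)\in I_N}p_N^{a,b}$ and $h^1(X_{N,K_N})\simeq\bigoplus_{(a,b)\in I_N}X_N^{a,b}$.

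\textbf{Main obstacle.} The subtlety lies entirely in (iii): the decomposition $h(X_{N,K_N})\simeq(\mathbf{1}\oplus\mathbf{L})\oplus\bigoplus_{I_N}X_N^{a,b}\simeq\mathbf{1}\oplus h^1(X_{N,K_N})\oplus\mathbf{L}$ obtained formally from (i), (ii) and the Chow-K\"unneth decomposition does not by itself imply $h^1\simeq\bigoplus X_N^{a,b}$, because cancellation of $\mathbf{1}\oplus\mathbf{L}$ in the Chow-motive category is not automatic. The flex-point identity $[\{x_0=0\}]\sim N[x]$ is precisely what promotes the abstract isomorphism to the cycle-level equality $p_N^{0,0}=e^0+e^2$, making the identification canonical; without the happy accident that the chosen base point is a flex, a more delicate argument (or a different equivalence relation) would be required.
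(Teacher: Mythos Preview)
Your proof is correct and takes a genuinely different, more structural route than the paper's.

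For (i) and (ii), the paper works by direct divisor computations on $X_{N,K_N}\times X_{N,K_N}$: it shows $e^0\circ p_N^{0,0}=e^0$ and $e^2\circ p_N^{0,0}=e^2$ via explicit rational functions such as $(1-y_1)/x_1$, and proves $p_N^{a,0}=0$ by exhibiting $(x_1-\zeta_N^r x_2)/(x_1-\zeta_N^{r'}x_2)$ to show that $\sum_s\varGamma_{g_N^{r,s}}$ is independent of $r$. You instead invoke the quotient maps to $\P^1$ and the structural fact that $\End_{\sM}(h(\P^1))=\Q\oplus\Q$, on which every automorphism of $\P^1$ acts trivially (since every graph has bidegree $(1,1)$). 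This is cleaner and more conceptual: it explains \emph{why} (ii) holds rather than verifying it, and generalises immediately to any curve dominated by a $G$-cover of $\P^1$.

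For (iii), the paper computes $e^0\circ p_N^{a,b}$ and $e^2\circ p_N^{a,b}$ for $(a,b)\in I_N$ and then deduces $e^1=\sum_{I_N}p_N^{a,b}$. You instead prove the single cycle identity $p_N^{0,0}=e^0+e^2$ by pulling back $\Delta_{\P^1}=e^0_{\P^1}+e^2_{\P^1}$ and using the linear equivalence $[\{x_0=0\}]\sim N[x]$ coming from $\div((y_0-z_0)/x_0)$. Amusingly, your function $(y_0-z_0)/x_0$ is exactly the paper's $(1-y_1)/x_1$ restricted to one factor, so the underlying divisor relation is the same; you have simply reorganised the argument so that the one relation on $X_N$ does all the work, rather than several relations on $X_N\times X_N$. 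Your ``Main obstacle'' paragraph correctly identifies why an abstract Krull--Schmidt argument does not suffice and why the cycle-level equality is needed.
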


\begin{proof}
Let $f \colon X_{N,K_N} \ra \Spec K_N$ be the structure morphism. 
For $g \in G_N$, we have 
$$e^0 \circ g = \vG_{x \circ f} \circ \vG_g = \vG_{g \circ x \circ f} = \vG_{g(x) \circ f},  $$
the graph of the constant morphism with value $g(x)$. 
Since 
$$g_N^{r,s}(x)=(0\colon \z_N^s\colon 1)=g_N^{0,s}(x),$$ 
we obtain
\begin{equation}\label{e^0}
e^0 \circ p_N^{a,b} 
= \frac{1}{N^2} \left(\sum_r \xi_N^{-ar}\right) \left(\sum_{s} \xi_N^{-bs} \vG_{g_N^{0,s}(x) \circ f}\right). 
\end{equation}
In particular, we have 
\begin{align*}
e^0-e^0 \circ p_N^{0,0} = \frac{1}{N} \Bigl(N\vG_{x\circ f} - \sum_s \vG_{g_N^{0,s}(x) \circ f}\Bigr)
= \frac{1}{N} \div\left(\frac{1-y_1}{x_1}\right)=0 
\end{align*}
where $(x_i,y_i)$ ($i=1,2$) are the affine coordinates of the $i$-th component of 
$X_{N,K_N} \times X_{N,K_N}$.  
Since 
$e^2 \circ g = e^2$
for any $g \in G_N$, we have
\begin{equation}\label{e^2}
e^2 \circ p_N^{a,b} = \frac{1}{N^2} \left(\sum_{r,s} \xi_N^{-(ar+bs)}\right) e^2. 
\end{equation}
In particular, we have
$e^2 \circ p_N^{0,0} = e^2$. 
Therefore, we have
\begin{multline*}
e^1 \circ p_N^{0,0} = (1 - e^0 - e^2) \circ p_N^{0,0} \\
= p_N^{0,0} - e^0 - e^2
= \frac{1}{N^2} \div\left(\frac{y_1^N-y_2^N}{(1-y_1)^N (1-y_2)^N}\right)=0, 
\end{multline*} 
hence (i) is proved.

If $a \neq 0$ and $b=0$, then we have
$$p_N^{a,0} = \frac{1}{N^2} \sum_r \xi^{-ar} \sum_s \vG_{g_N^{r,s}}.$$ 
Since
$$\sum_s\vG_{g_N^{r,s}} - \sum_s\vG_{g_N^{r',s}} 
= \div\left(\frac{x_1-\z_N^rx_2}{x_1-\z_N^{r'}x_2}\right)=0, $$
$\sum_s \vG_{g_N^{r,s}}$ does not depend on $r$, hence we obtain $p_N^{a,0} =0$. 
The other cases of (ii) are similarly proved. 

Finally, if $(a,b) \in I_N$, then by \eqref{e^0} and \eqref{e^2}, 
we have $e^0\circ p_N^{a,b} = e^2 \circ p_N^{a,b} =0$, 
hence
\begin{align*}
\sum_{(a,b) \in I_N} p_N^{a,b} = e^1 \circ \sum_{(a,b) \in I_N} p_N^{a,b} 
= e^1 \circ \sum_{(a,b) \in G_N} p_N^{a,b} = e^1, 
\end{align*}and (iii) follows. 
\end{proof}

Now, we define Fermat motives over $k$. 
For any $E$, an element of $E[G_N]$ fixed by the action of $H_{N,k}$ 
defines a cycle on $X_{N,K_{N,k}} \times_{K_N} X_{N,K_N}$ {\em defined over $k$}, 
i.e. in the image of 
$$
(\pi_N\times_{K_N}\pi_N)^*\colon E \ot_\Z \CH^1(X_N \times_k X_N) 
\lra 
E \ot_\Z \CH^1(X_{N,K_N} \times_{K_N} X_{N,K_N}),
$$
where $\pi_N = \pi_{N,K_N/k}$. To make the situation clear, we denote the canonical morphisms as:
\begin{equation}\label{morphism-product}
\xymatrix{
X_{N,K_N} \times_{K_N} X_{N,K_N} \ar@{^{(}->}[r] \ar[rd]_{\pi_N\times_{K_N} \pi_N \ \ } & X_{N,K_N} \times_k X_{N,K_N}\ar[d]^{\pi_N\times_k\pi_N} \\
& X_N \times_k X_N . 
}
\end{equation}
Note that $\pi_N\times_{K_N} \pi_N$ and $\pi_N\times_{k} \pi_N$ 
are finite morphisms of degree $\sharp H_{N,k}$
and $\sharp H_{N,k}^2$, respectively. 
In particular, $(\pi_N\times_{K_N}\pi_N)^*$ is injective and its left-inverse is given by $(\sharp H_{N,k})^{-1} (\pi_N\times_{K_N}\pi_N)_*$. 
Since the intersection product is compatible with the pull-back, we obtain 
an $E$-algebra homomorphism
$$E[G_N]^{H_{N,k}} \lra E \ot_\Z \CH^1(X_{N} \times_k X_N) = \End_{\sM_{k,E}}(h(X_N)).$$

By Lemma \ref{projector-smaller} (iii), $p_N^{[a,b]_k}$ defines an element of 
$\End_{\sM_{k,E_{N,k}}} (h(X_N))$, which we also denote by the same letter.   
Since 
$$(\pi_N\times_{K_N}\pi_N)^*(\pi_N\times_{K_N}\pi_N)_* p_N^{a,b}= \sum_{h\in H_{N,k}} p_N^{ha,hb},$$
we have
\begin{equation}\label{definition-p_N^{[a,b]_k}}
(\pi_N\times_{K_N}\pi_N)_* p_N^{a,b}= \frac{\sharp H_{N,k}}{\sharp[a,b]_k} \, p_N^{[a,b]_k}.
\end{equation}

\begin{definition}
For $[a,b]_k \in H_{N,k} \backslash G_N$,  define{\rm :}  
$$X_N^{[a,b]_k} = (X_{N}, p_{N}^{[a,b]_k}) \ \in \sM_{k, E_{N,k}}.$$
Then, by \eqref{property-p^{[a,b]}}, we have a decomposition
\begin{equation*}
 h(X_{N}) \simeq \bigoplus_{[a,b]_k \in H_{N,k} \backslash G_N} X_{N}^{[a.b]_k}.
\end{equation*}
\end{definition}

\begin{proposition}\label{decomposition-X} 
We have isomorphisms in $\sM_{k,E_{N,k}}${\rm :}
\begin{enumerate}
	\item $X_N^{[0,0]_k} \simeq \mathbf{1} \oplus \mathbf{L}$, 
	\item $X_N^{[a,b]_k} \simeq  {\bf 0}$ \ if only one of $a$, $b$, $a+b$ is $0$,  
	\item $h^1(X_N) \simeq \bigoplus_{[a,b]_k \in H_{N,k}\backslash I_N} X_N^{[a,b]_k}$. 
\end{enumerate}
\end{proposition}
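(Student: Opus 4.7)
The plan is to deduce all three statements from Proposition \ref{decomposition-X_K} by descending from $\sM_{K_N,E_N}$ to $\sM_{k,E_{N,k}}$. The key tool is the composite functor $\pi_N^* \colon \sM_{k,E_{N,k}} \to \sM_{K_N,E_N}$ given by scalar extension along $\pi_{N,K_N/k}$ followed by coefficient extension from $E_{N,k}$ to $E_N$. Both steps are faithful---the first by the injectivity of pull-back on Chow groups recalled in \S 2.3, the second because $E_N$ is a free $E_{N,k}$-module---so the composite is faithful. Under $\pi_N^*$ the idempotent $p_N^{[a,b]_k} \in E_{N,k}[G_N]^{H_{N,k}}$ is sent to the same element viewed in $E_N[G_N]$, which by definition equals $\sum_{(c,d)\in[a,b]_k} p_N^{c,d}$; by the orthogonality \eqref{property-p^{a,b}} this yields
$$\pi_N^*\,X_N^{[a,b]_k} \ \simeq\ \bigoplus_{(c,d)\in[a,b]_k} X_N^{c,d} \quad \text{in } \sM_{K_N,E_N}.$$

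For (ii), the condition that exactly one of $a,b,a+b$ vanishes is preserved under the $H_{N,k}$-action on $G_N$, so it holds for every $(c,d)\in[a,b]_k$; by Proposition \ref{decomposition-X_K}(ii) each summand on the right vanishes, so $\pi_N^* X_N^{[a,b]_k} \simeq \mathbf{0}$, and faithfulness combined with pseudo-abelianness of $\sM_{k,E_{N,k}}$ yields $X_N^{[a,b]_k} \simeq \mathbf{0}$. For (i), since $(0,0)$ is $H_{N,k}$-fixed we have $p_N^{[0,0]_k} = p_N^{0,0} = \frac{1}{N^2}\sum_{g\in G_N} g$, a correspondence already defined on $X_N \times_k X_N$. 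The identities $e^0 \circ p_N^{0,0} = e^0$, $e^2 \circ p_N^{0,0} = e^2$, and $e^1 \circ p_N^{0,0} = 0$ derived in the proof of Proposition \ref{decomposition-X_K}(i) involve only cycles over $k$ (since $x\in X_N(k)$), so by the injectivity of $(\pi_N\times_{K_N}\pi_N)^*$ recalled after \eqref{morphism-product} they descend to the corresponding equalities in $\End_{\sM_k}(h(X_N))$. Hence $p_N^{0,0} = e^0 + e^2$ over $k$, and $X_N^{[0,0]_k} \simeq h^0(X_N) \oplus h^2(X_N) \simeq \mathbf{1} \oplus \mathbf{L}$.

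Finally, (iii) follows by comparing the decomposition $h(X_N) \simeq \bigoplus_{[a,b]_k\in H_{N,k}\backslash G_N} X_N^{[a,b]_k}$ given by the definition with the Chow--K\"unneth decomposition $h(X_N) \simeq \mathbf{1} \oplus h^1(X_N) \oplus \mathbf{L}$ of \S 2.2: the class $[0,0]_k$ contributes $\mathbf{1} \oplus \mathbf{L}$ by (i), the classes where exactly one of $a,b,a+b$ vanishes contribute $\mathbf{0}$ by (ii), and the remaining summands indexed by $H_{N,k}\backslash I_N$ must constitute $h^1(X_N)$. The principal subtlety throughout is the descent step---ensuring that identities among cycles proved over $K_N$ genuinely come from cycles on $X_N\times_k X_N$---but this is dispatched uniformly by the injectivity of pull-back along the finite morphism $\pi_N\times_{K_N}\pi_N$.
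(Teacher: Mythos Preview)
Your proof is correct and follows essentially the same strategy as the paper's: both use the injectivity of $\End_{\sM_{k,E_{N,k}}}(h(X_N)) \to \End_{\sM_{K_N,E_N}}(h(X_{N,K_N}))$ (equivalently, faithfulness of $\pi_N^*$) to reduce the computation of $e^i \circ p_N^{[a,b]_k}$ to the calculations already done in Proposition~\ref{decomposition-X_K}. The paper's proof is a three-line sketch of exactly this idea, while you spell out the descent more carefully; in particular your observation that (i) and (ii) force the projector identity $e^1 = 1 - p_N^{[0,0]_k} - 0 = \sum_{[a,b]_k \in H_{N,k}\backslash I_N} p_N^{[a,b]_k}$, which justifies the ``remaining summands'' step in (iii), is implicit in the paper but worth making explicit as you do.
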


\begin{proof}
Since 
$$\End_{\sM_{k,E_{N,k}}}(h(X_N)) \lra \End_{\sM_{K_N,E_N}}(h(X_{N,K_N}))$$
is injective, 
we can compute $e^i \circ p_N^{[a,b]_k}$ in the latter ring. 
Then the proof reduces to 
Proposition \ref{decomposition-X_K}. 
\end{proof}
 
\begin{remark}\label{fermat-dual}
Since $\,^tp_N^{a,b} = p_N^{-a,-b}$, we have 
\begin{equation*}
(X_N^{a,b})^\vee = X_N^{-a,-b}(1), \quad (X_N^{[a,b]_k})^\vee = X_N^{[-a,-b]_k}(1). 
\end{equation*}
\end{remark}

\begin{remark}\label{C_N}
We can also define similarly a motive $X_N^{[a,b]}$
for each $H_N$-orbit $[a,b]$ of $(a,b)\in G_N$. 
Then one shows that $X_N^{[a,b]} \in \sM_{k}$. 
If $\Gal(K_N/k)=H_N$,  e.g. $k=\Q$, then $X_N^{[a,b]_k} =X_N^{[a,b]}$. 
For integrs $0<a,b<N$, let $C_N^{a,b}$ be the projective smooth curve whose affine 
equation is given by 
\begin{equation*}
v^N=u^a(1-u)^b 
\end{equation*}
(it has singularities possibly at $u=0,1,\infty$).  There exists a morphism 
\begin{equation*}
 \psi \colon X_N \lra C_N^{a,b}; \quad (x,y) \longmapsto (u,v)=(x^N,x^ay^b). 
\end{equation*}
Suppose that $N$ is a prime number and  $(a,b) \in I_N$. 
Then one shows that $\psi$ induces an isomorphism in $\sM_{k}$: 
\begin{equation*}
X_N^{[a,b]} \simeq h^1(C_N^{a,b}).
\end{equation*}
\end{remark}

\subsection{Relations among Fermat motives}

When $(a,b)$ is not primitive, our motives $X_N^{a,b}$, $X_N^{[a,b]_k}$ 
come from motives of lower degree. 
Let $N=N'd$ and use the following abbreviated notations:
\begin{equation*}
\xymatrix{
X_{N,K_N} \ar[dd]_{\pi_{N}} \ar[r]^{\pi_{K_N}} & X_{N',K_N} \ar[d]^{\pi_{K_N/K_{N'}}}  \\
& X_{N',K_{N'}} \ar[d]^{\pi_{N'}}\\
X_{N} \ar[r]^{\pi_k} & X_{N'}. 
}\end{equation*}
Consider the homomorphisms on Chow groups with coefficients in $E_N$ (resp. $E_{N,k}$) 
induced by the $K_N$-morphism (resp. $k$-morphism) 
$$
\pi_{K_N}\times\pi_{K_N} \colon X_{N,K_N} \times X_{N,K_N}  \lra X_{N',K_N} \times X_{N',K_N}, 
$$
resp. 
$$
\pi_{k}\times\pi_{k} \colon X_{N} \times X_{N}  \lra X_{N'} \times X_{N'}.
$$

\begin{lemma}\label{projector-functoriality} 
Let the notations as above. 
\begin{enumerate}
\item For $(a,b) \in G_N$, we have
$$
(\pi_{K_N}\times\pi_{K_N})_*p_{N}^{a,b}  = 
\begin{cases}
d^2 p_{N',K_N}^{a',b'} & \text{if $(a,b)=(a'd,b'd)$ for some $(a',b')\in G_{N'}$,}\\
0 & \text{otherwise}.
\end{cases}
$$
\item For $(a',b') \in G_{N'}$, we have 
$(\pi_{K_N}\times\pi_{K_N})^*p_{N',K_N}^{a',b'} = d^2 p_N^{a'd,b'd}$. 
\item For $(a,b) \in G_N$, we have
$$
(\pi_k\times\pi_k)_*p_{N}^{[a,b]_k} = 
\begin{cases}
d^2 p_{N'}^{[a',b']_k} & \text{if $[a,b]_k=[a'd,b'd]_k$ for some $(a',b')\in G_{N'}$,}\\
0 & \text{otherwise}.
\end{cases}
$$
\item For $(a',b') \in G_{N'}$, we have 
$(\pi_k\times\pi_k)^*p_{N'}^{[a',b']_k} = d^2 p_N^{[a'd,b'd]_k}$. 
\end{enumerate}
\end{lemma}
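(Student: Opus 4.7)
The plan is to establish parts (i) and (ii) by explicit computation over $K_N$ using the group-ring description of the projectors, and then deduce (iii) and (iv) by Galois descent from $K_N$ down to $k$.

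For (i), the key geometric input is the identity $(\pi_{K_N}\times\pi_{K_N})_*\vG_g = d^2\,\vG_{\bar g}$ for $g\in G_N$, where $\bar g$ denotes its image in $G_{N'}$. Indeed, the commutative relation $\pi_{K_N}\circ g = \bar g\circ\pi_{K_N}$ (a direct check on coordinates) shows that $\pi_{K_N}\times\pi_{K_N}$ maps $\vG_g$ surjectively onto $\vG_{\bar g}$, and projection to either factor identifies this map with $\pi_{K_N}$ itself, of degree $d^2$. Substituting into $p_N^{a,b}$ and regrouping the sum by $\bar g$, the inner character sum over $\Ker(G_N\to G_{N'})$ becomes $\sum_{(t,u)\in(\Z/d)^2}\xi_N^{-(at+bu)N'}$, which equals $d^2$ when $d\mid a$ and $d\mid b$ and vanishes otherwise (since $\xi_N^{N'}$ is a primitive $d$-th root of unity). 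Using $\xi_N^d = \xi_{N'}$ and $N=dN'$, the resulting expression collapses to $d^2\,p_{N',K_N}^{a',b'}$, in agreement with Lemma~\ref{projector-level-change}.

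For (ii), I would derive $(\pi_{K_N}\times\pi_{K_N})^*\vG_{\bar g}=\sum_{g\mapsto\bar g}\vG_g$ from formula \eqref{corr-pull} together with the identity $\pi_{K_N}^*\circ(\pi_{K_N})_*=\sum_{h\in G_{N/N'}}h^*$ (the cycle-theoretic computation of $[X_{N,K_N}\times_{X_{N',K_N}}X_{N,K_N}]$, whose generic components are the graphs $\vG_h$ indexed by the covering group). Substituting into $p_{N',K_N}^{a',b'}$ and converting $\xi_{N'}^{-a'r-b's}=\xi_N^{-a'dr-b'ds}$ then yields (ii) at once.

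Parts (iii) and (iv) follow by Galois descent from the Cartesian square whose vertical arrows are $v=\pi_{K_N}\times\pi_{K_N}$, $v'=\pi_k\times\pi_k$ and whose horizontal arrows $u,u'$ are the finite flat base-change morphisms. The flat base-change identity ${u'}^*\circ v'_* = v_*\circ u^*$ together with the defining descent property $u^*p_N^{[a,b]_k} = p_N^{[a,b]_k}$ (as elements of the group ring) reduces (iii) to computing $v_*\bigl(\sum_{(c,c')\in[a,b]_k}p_N^{c,c'}\bigr)$. By (i) this vanishes unless every orbit element is divisible by $d$ (equivalently, $d\mid a$ and $d\mid b$), in which case it equals $d^2\sum p_{N',K_N}^{c/d,c'/d} = d^2\,{u'}^*\bigl(p_{N'}^{[a',b']_k}\bigr)$, and injectivity of ${u'}^*$ descends the identity to $X_{N'}\times_k X_{N'}$. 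Part (iv) is dual: apply $u^*$ to both sides, use flat base change, the formula of (ii), and the injectivity of $u^*$. The main obstacle in (iii) is the bookkeeping of orbit sizes, specifically the bijection $[a,b]_k\leftrightarrow[a',b']_k$ induced by division by $d$; this follows from the surjectivity $H_{N,k}\twoheadrightarrow H_{N',k}$ and the compatibility of the $H_{N,k}$-action with the reduction $G_N\to G_{N'}$, which together ensure that the orbit-size normalizations in \eqref{definition-p_N^{[a,b]_k}} on the two levels cancel precisely against the factor $d^2$.
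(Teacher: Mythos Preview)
Your proof is correct and, for parts (i), (ii) and (iv), essentially identical to the paper's. For (iii) you take a slightly different but equally valid route: you pull back along $u'^*$ to $X_{N',K_N}\times_{K_N}X_{N',K_N}$, invoke proper base change on the Cartesian square, apply (i) termwise to the orbit sum, and descend by injectivity of $u'^*$; the paper instead writes $p_N^{[a,b]_k}=m^{-1}(\pi_N\times_{K_N}\pi_N)_*p_N^{a,b}$ via \eqref{definition-p_N^{[a,b]_k}} and pushes forward through the three-step factorization $X_{N,K_N}\to X_{N',K_N}\to X_{N',K_{N'}}\to X_{N'}$. Your approach avoids the intermediate field $K_{N'}$ at the cost of needing the bijection $[a,b]_k\leftrightarrow[a',b']_k$ explicitly, while the paper absorbs this bookkeeping into the normalization constant $m$.

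One small remark: your final sentence about ``orbit-size normalizations in \eqref{definition-p_N^{[a,b]_k}} \dots\ cancel precisely against the factor $d^2$'' does not match the argument you actually gave. In your route via $u^*$ and $u'^*$ there are no normalization constants at all---the factor $d^2$ comes straight from (i) and survives unchanged; the only thing you need is that $(c,c')\mapsto(c/d,c'/d)$ is a bijection $[a,b]_k\to[a',b']_k$, which you correctly justified. That sentence seems to anticipate the paper's push-forward route rather than your own, and can simply be dropped.
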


\begin{proof}
Since the degree of $X_{N,K_N}$ over $X_{N',K_N}$ is $d^2$, we have 
$$(\pi_{K_N}\times\pi_{K_N})_*\vG_{g_N^{r,s}} = d^2 \vG_{g_{N'}^{r,s}},$$ 
and (i) follows from Lemma \ref{projector-level-change} (i). 
On the other hand, (ii) follows easily from  
$$(\pi_{K_N}\times\pi_{K_N})^* \vG_{g_{N'}^{r,s}} 
= \sum_{g_N^{r,s} \mapsto g_{N'}^{r',s'}} \vG_{g_N^{r,s}}.$$ 

Put $m=\sharp H_{N,k} / \sharp[a,b]_k$. Then, using similar notations as \eqref{morphism-product}, 
we have:  
\begin{align*}
&(\pi_k\times\pi_k)_*p_{N}^{[a,b]_k} \\
&= m^{-1} (\pi_k\times\pi_k)_* (\pi_N\times_{K_N}\pi_N)_* p_{N}^{a,b} \\
&= m^{-1} (\pi_{N'}\times_{K_{N'}}\pi_{N'})_* (\pi_{K_N/K_{N'}}\times_{K_N}\pi_{K_N/K_{N'}})_*
(\pi_{K_N}\times\pi_{K_N})_* p_{N}^{a,b} \\
& \us{{\rm (i)}}{=} \begin{cases}
m^{-1}d^2   (\pi_{N'}\times_{K_{N'}}\pi_{N'})_* (\pi_{K_N/K_{N'}}\times_{K_N}\pi_{K_N/K_{N'}})_* p_{N',K_N}^{a',b'}, \\
0.
\end{cases}
\end{align*}
Using 
$(\pi_{K_N/K_{N'}}\times_{K_N}\pi_{K_N/K_{N'}})_* p_{N',K_N}^{a',b'} = [K_N:K_{N'}] p_{N'}^{a',b'}$, 
\eqref{definition-p_N^{[a,b]_k}} and $\sharp [a,b]_k = \sharp [a',b']_k$, 
we obtain (iii). 

Finally, (iv) follows from the injectivity of $(\pi_N\times_{K_N}\pi_N)^*$ and
\begin{align*}
&(\pi_N\times_{K_N}\pi_N)^*(\pi_k\times\pi_k)^*p_{N'}^{[a',b']_k} \\
&= (\pi_{K_N}\times\pi_{K_N})^*(\pi_{K_N/K_{N'}}\times_{K_N}\pi_{K_N/K_{N'}})^*(\pi_{N'}\times_{K_{N'}}\pi_{N'})^*p_{N'}^{[a',b']_k} \\
& = \sum_{(e',f') \in [a',b']_k}  (\pi_{K_N}\times\pi_{K_N})^*p_{N',K_N}^{e',f'} 
\us{{\rm (ii)}}{=} \sum_{(e',f') \in [a',b']_k} d^2 p_N^{e'd,f'd} \\
& = d^2 \sum_{(e,f) \in [a'd,b'd]_k} p_N^{e,f} 
 = d^2 (\pi_N\times_{K_N}\pi_N)^* p_N^{[a,b]_k}. 
\end{align*}
\end{proof}

\begin{proposition}\label{prop-level}
Let $N=N'd$, $(a',b') \in G_{N'}$ and $(a,b) = (a'd, b'd) \in G_N$. 
Then we have{\rm :} 
\begin{enumerate} 
\item $X_N^{a,b} \simeq \varphi_{K_N/K_{N'}}^* X_{N'}^{a',b'}$ \  in $\sM_{K_N,E_N}$, 
\item $X_N^{[a,b]_k} \simeq X_{N'}^{[a',b']_k}$ \ in $\sM_{k,E_{N,k}}$. 
\end{enumerate}
\end{proposition}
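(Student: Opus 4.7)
The plan is to construct mutually inverse morphisms explicitly using the finite degree-$d^2$ covers, namely $\pi_{K_N}\colon X_{N,K_N}\to X_{N',K_N}$ for (i) and $\pi_k\colon X_N\to X_{N'}$ for (ii).

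For (i), I take as candidates
$$\a=\tfrac{1}{d^2}(\pi_{K_N})_*\colon X_N^{a,b}\lra\vphi_{K_N/K_{N'}}^*X_{N'}^{a',b'},\qquad \b=\pi_{K_N}^*\colon \vphi_{K_N/K_{N'}}^*X_{N'}^{a',b'}\lra X_N^{a,b}.$$
To see these descend to morphisms between the projector-cut summands one must verify the intertwining identities $(\pi_{K_N})_*\circ p_N^{a,b}=p_{N',K_N}^{a',b'}\circ(\pi_{K_N})_*$ and $\pi_{K_N}^*\circ p_{N',K_N}^{a',b'}=p_N^{a,b}\circ\pi_{K_N}^*$; via \eqref{corr-pull} and \eqref{corr-push} these translate into equalities of cycles on $X_{N,K_N}\times X_{N',K_N}$ and follow from Lemma~\ref{projector-functoriality}(i),(ii).

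For the compositions, $\a\circ\b=\tfrac{1}{d^2}(\pi_{K_N})_*\pi_{K_N}^*=\tfrac{1}{d^2}\cdot d^2\,\Delta_{X_{N',K_N}}$, which restricted to the target gives the identity $p_{N',K_N}^{a',b'}$. Next, $\b\circ\a=\tfrac{1}{d^2}\pi_{K_N}^*(\pi_{K_N})_*=\tfrac{1}{d^2}[X_{N,K_N}\times_{X_{N',K_N}}X_{N,K_N}]=\tfrac{1}{d^2}\sum_{g\in G_{N/N'}}g$. The standard calculation $g\cdot p_N^{a,b}=\theta_N^{a,b}(g)\,p_N^{a,b}$ combined with the fact that for $g=g_N^{r,s}\in G_{N/N'}$ (so $N'\mid r,s$) and $(a,b)=(a'd,b'd)$ one has $\theta_N^{a,b}(g)=\xi_N^{a'dr+b'ds}=1$, yields $\b\circ\a|_{X_N^{a,b}}=\tfrac{1}{d^2}\cdot d^2\, p_N^{a,b}=p_N^{a,b}$, the identity on $X_N^{a,b}$.

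For (ii) the same argument works verbatim with $\pi_k$ in place of $\pi_{K_N}$ and Lemma~\ref{projector-functoriality}(iii),(iv) in place of (i),(ii). Alternatively, one can reduce to (i) by base-changing to $K_N$, decomposing $p_N^{[a,b]_k}=\sum_{(c,d)\in[a,b]_k}p_N^{c,d}$, and exploiting the injection $\End_{\sM_{k,E_{N,k}}}(h(X_N))\hookrightarrow\End_{\sM_{K_N,E_N}}(h(X_{N,K_N}))$ used in the proof of Proposition~\ref{decomposition-X}. The main technical point in the whole argument is the cycle identity $[X_{N,K_N}\times_{X_{N',K_N}}X_{N,K_N}]=\sum_{g\in G_{N/N'}}\vG_g$, which requires careful handling of the ramification of $\pi_{K_N}$ over $Z_{N'}$; once this is settled, the rest is bookkeeping with projectors.
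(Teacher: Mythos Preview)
Your approach is essentially the paper's: the mutually inverse isomorphisms are $\pi^*$ and $d^{-2}\pi_*$, and the heart of the argument is that $\pi^*\pi_*=\sum_{g\in G_{N/N'}}g$ acts as $d^2$ on the $(a,b)$-piece because $\theta_N^{a,b}|_{G_{N/N'}}=1$. Two minor remarks: the intertwining identities do not fall out of Lemma~\ref{projector-functoriality} quite as immediately as you indicate---the paper checks commutativity of the relevant square only after composing with the injective map $\circ\,\pi_*=(\pi\times\id)^*$, and it is \emph{that} composite to which Lemma~\ref{projector-functoriality}(ii) applies; and the fiber-product identity you flag as needing care over $Z_{N'}$ is in fact routine, since the cover is generically \'etale Galois and one is comparing closures of cycles that agree on a dense open.
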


\begin{proof}
(i) By definition, $\varphi_{K_N/K_{N'}}^* X_{N'}^{a',b'} = (X_{N',K_N}, p_{N',K_N}^{a',b'})$. 
Consider the following commutative diagram with $\pi=\pi_{K_N}$: 
\begin{equation}\label{four-1}
\xymatrix{
h(X_{N',K_N})\ar[r]^{\pi^*} \ar[d]_{p_{N',K_N}^{a',b'}} & 
h(X_{N,K_N}) \ar[r]^{\pi_*} \ar[d]_{p_N^{a,b}} & 
h(X_{N',K_N}) \ar[r]^{\pi^*} \ar[d]_{p_{N',K_N}^{a',b'}} & 
h(X_{N,K_N})  \ar[d]_{p_N^{a,b}} \\
h(X_{N',K_N})\ar[r]^{\pi^*} & h(X_{N,K_N}) \ar[r]^{\pi_*}  & h(X_{N',K_N})  \ar[r]^{\pi^*}  & h(X_{N,K_N}).  
}\end{equation}
Let us show that the commutativity of the first square.  
Since $\circ \pi_* = (\pi \times \id)^*$ by \eqref{corr-pull} and is injective, 
it suffices to show the commutativity after applying it. 
First, we have
$$\pi^*\circ p_{N',K_N}^{a',b'} \circ \pi_* =(\pi \times \pi)^* p_{N',K_N}^{a',b'} = d^2p_N^{a,b}$$
by Lemma \ref{projector-functoriality} (ii). 
On the other hand, using \eqref{corr-pull} and \eqref{corr-push}, we have 
$$p_N^{a,b} \circ \pi^* \circ \pi_* = (\pi \times \id)^*(\pi \times \id)_*p_N^{a,b}
= p_N^{a,b} \circ \sum_{g\in G_{N/N'}} g .$$ 
Since $\theta_N^{a,b}(g)=1$ for $g \in G_{N/N'}$, we have $p_N^{a,b} \circ g= p_N^{a,b}$, 
hence the commutativity is proved.  
The commutativity of the second square is the ``transpose" of the first one: 
$$\pi_* \circ p_N^{a,b}={}^t\!(p_N^{-a,-b} \circ \pi^* )
= {}^t\!(\pi^*\circ p_{N',K_N}^{-a',-b'}) =p_{N',K_N}^{a',b'} \circ \pi_*.$$ 
Therefore, $\pi^*$ maps $X_{N',K_N}^{a',b'}$ to $X_N^{a,b}$ and 
$\pi_*$ maps $X_N^{a,b}$ to $X_{N',K_N}^{a',b'}$. 
Recall that $\pi_* \circ \pi^* = d^2$. On the other hand, we have 
$$\pi^*\circ\pi_*\circ p_N^{a,b}= \pi^*\circ p_{N',K_N}^{a',b'}\circ \pi_* = d^2 p_N^{a,b}.$$ 
Therefore, 
$$\pi^*\colon X_{N',K_N}^{a',b'} \lra X_N^{a,b}, \quad d^{-2}\pi_*\colon X_N^{a,b} \lra X_{N',K_N}^{a',b'}$$ 
are isomorphisms inverse to each other.  

(ii) Consider the commutative diagram with $\pi=\pi_{k}$: 
\begin{equation}\label{four-2}
\xymatrix{
h(X_{N'})\ar[r]^{\pi^*} \ar[d]_{p_{N'}^{[a',b']_k}} & h(X_{N}) \ar[r]^{\pi_*} \ar[d]_{p_N^{[a,b]_k}} 
& h(X_{N'}) \ar[r]^{\pi^*} \ar[d]_{p_{N'}^{[a',b']_k}} & h(X_{N})  \ar[d]_{p_N^{[a,b]_k}} \\
h(X_{N'})\ar[r]^{\pi^*} & h(X_{N}) \ar[r]^{\pi_*}  & h(X_{N'})  \ar[r]^{\pi^*}  & h(X_{N}). 
}\end{equation}
Similarly as above using Lemma \ref{projector-functoriality} (iv), the commutativity of the first square is reduced to show
\begin{equation*}\label{p-pi-pi}
p_N^{[a,b]_k} \circ \pi^* \circ \pi_* = d^2 p_N^{[a,b]_k}.
\end{equation*}
We can compute the composition after applying the faithful functor $\vphi_{K_N/k}^*$. 
Then we are reduced to the calculation of (i). The second square is again the ``transpose" of the first one.   The rest of the proof is parallel to (i).   
\end{proof}

Together with Propositions \ref{decomposition-X_K} and \ref{decomposition-X}, we obtain:  
\begin{corollary}\label{cor-level}
We have isomorphisms{\rm :} 
\begin{enumerate}
	\item $h^1(X_{N,K_N}) \simeq \bigoplus_{N'|N} \bigoplus_{(a',b') \in I_{N'}^\prim} 
	\varphi_{K_N/K_{N'}}^*X_{N'}^{a',b'}$ \ in $\sM_{K_N,E_N}$,
	\item $h^1(X_N) \simeq \bigoplus_{N'|N} \bigoplus_{[a',b']_k \in H_{N',k} 
	\backslash I_{N'}^\prim}  X_{N'}^{[a',b']_k}$ \ in $\sM_{k,E_{N,k}}$. 
\end{enumerate}
\end{corollary}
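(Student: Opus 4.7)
The proof is essentially a bookkeeping argument that glues together three previously established facts. I would organize it as follows.

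First, for part (i), I would start from the decomposition
$$h^1(X_{N,K_N}) \simeq \bigoplus_{(a,b) \in I_N} X_N^{a,b}$$
provided by Proposition \ref{decomposition-X_K} (iii). The next step is to rewrite the index set $I_N$ using the bijection
$$I_N \simeq \bigsqcup_{N' \mid N} I_{N'}^{\mathrm{prim}}$$
recorded in \S 2.6, which is obtained by sending a primitive $(a',b') \in I_{N'}^{\mathrm{prim}}$ to $(a'd, b'd) \in I_N$ where $d = N/N'$. This identifies each $(a,b) \in I_N$ with a unique pair $((a',b'), N')$ where $(a',b')$ is primitive of exact level $N'$. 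Finally, Proposition \ref{prop-level} (i) gives an isomorphism $X_N^{a,b} \simeq \varphi_{K_N/K_{N'}}^* X_{N'}^{a',b'}$ in $\sM_{K_N,E_N}$ for each such pair; substituting these term-by-term yields the desired decomposition.

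For part (ii), the strategy is identical with orbits in place of elements. I would start from Proposition \ref{decomposition-X} (iii), which gives
$$h^1(X_N) \simeq \bigoplus_{[a,b]_k \in H_{N,k}\backslash I_N} X_N^{[a,b]_k},$$
then reindex via the bijection
$$H_{N,k}\backslash I_N \simeq \bigsqcup_{N' \mid N} H_{N',k}\backslash I_{N'}^{\mathrm{prim}}$$
also noted in \S 2.6 (the map $(a',b') \mapsto (a'd, b'd)$ intertwines the $H_{N',k}$- and $H_{N,k}$-actions since the composition $H_{N,k} \to H_{N',k}$ is surjective, so orbits correspond bijectively). The final substitution uses Proposition \ref{prop-level} (ii), namely $X_N^{[a,b]_k} \simeq X_{N'}^{[a',b']_k}$ in $\sM_{k,E_{N,k}}$, applied term-by-term.

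There is no real obstacle here; the only point requiring minor care is to verify that the reparametrization in (ii) is well-defined and compatible with the $H$-actions at the two levels, so that the orbit decomposition really corresponds under the bijection of index sets. Once this compatibility is pointed out, both statements follow from the cited propositions by direct substitution, and no further computation is needed.
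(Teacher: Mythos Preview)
Your proposal is correct and matches the paper's approach exactly: the corollary is stated immediately after Proposition~\ref{prop-level} with the one-line justification ``Together with Propositions~\ref{decomposition-X_K} and~\ref{decomposition-X}, we obtain,'' which is precisely the combination of decompositions, index-set bijections from \S2.6, and the level-change isomorphisms that you spell out.
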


Next, the relation between $X_N^{a,b}$ and $X_{N}^{[a,b]_k}$ is as follows. 
\begin{proposition}\label{prop-field} \ 
\begin{enumerate}
\item For $(a,b) \in G_N$, we have an isomorphism in $\sM_{K_N,E_N}${\rm :} 
$$\varphi_{K_N/k}^* X_N^{[a,b]_k} \simeq \bigoplus_{(c,d) \in[a,b]_k} X_N^{c,d}.$$
\item For $(a,b) \in G_N^\prim$,  we have an isomorphism in $\sM_{k,E_N}${\rm :} 
$$\varphi_{K_N/k*}X_N^{a,b} \simeq X_N^{[a,b]_k}.$$ 
\end{enumerate}
\end{proposition}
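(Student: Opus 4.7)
The plan is to prove both parts by explicit cycle computations, paralleling the argument of Proposition \ref{prop-level}.

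For (i), recall that the functor $\vphi_{K_N/k}^*$ is realized on correspondences by the cycle pull-back along $\pi_N\times_{K_N}\pi_N \colon X_{N,K_N}\times_{K_N}X_{N,K_N}\to X_N\times_k X_N$ from diagram \eqref{morphism-product}. Applying $(\pi_N\times_{K_N}\pi_N)^*$ to \eqref{definition-p_N^{[a,b]_k}} and invoking the identity
$(\pi_N\times_{K_N}\pi_N)^*(\pi_N\times_{K_N}\pi_N)_*p_N^{a,b} = \sum_{h\in H_{N,k}} p_N^{ha,hb}$
from the text preceding that equation, together with the fact that each orbit element $(c,d)\in[a,b]_k$ is hit by $\sharp H_{N,k}/\sharp[a,b]_k$ elements of $H_{N,k}$, one obtains
$$(\pi_N\times_{K_N}\pi_N)^* p_N^{[a,b]_k} = \sum_{(c,d)\in[a,b]_k} p_N^{c,d}.$$
The right-hand side is a sum of mutually orthogonal idempotents by \eqref{property-p^{a,b}}, which yields the decomposition in (i).

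For (ii), I construct mutually inverse morphisms in $\sM_{k,E_N}$. Let $\pi$ denote the $k$-morphism $\pi_N\colon X_{N,K_N}\to X_N$, $\iota$ the inclusion of \eqref{morphism-product}, and $(p_N^{a,b})_{|k}:=\iota_*p_N^{a,b}$, the cycle realizing $\vphi_{K_N/k*}X_N^{a,b}$. Set
$$\phi = \pi_*\circ(p_N^{a,b})_{|k},\qquad \psi = (p_N^{a,b})_{|k}\circ\pi^*.$$
Using \eqref{corr-push} with $\a=\b=\pi$, together with the factorization $(\pi\times_k\pi)\circ\iota=\pi_N\times_{K_N}\pi_N$ and \eqref{definition-p_N^{[a,b]_k}}, one finds $\phi\circ\psi = (\sharp H_{N,k}/\sharp[a,b]_k)\,p_N^{[a,b]_k}$. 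The primitivity of $(a,b)$ forces the stabilizer in $H_{N,k}$ to be trivial, hence $\sharp[a,b]_k=\sharp H_{N,k}$ and $\phi\circ\psi = p_N^{[a,b]_k}$.

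For the reverse composition, the \'etale Galois-cover structure of $\pi$ gives $X_{N,K_N}\times_{X_N}X_{N,K_N}=\bigsqcup_{h\in H_{N,k}}\vG_h$, so $\pi^*\circ\pi_*=\sum_{h\in H_{N,k}}h$. Combined with the conjugation identity $h\circ(p_N^{a,b})_{|k}\circ h^{-1} = (p_N^{h^{-1}a,h^{-1}b})_{|k}$ (the cycle-level version of Lemma \ref{projector-smaller}(i), which follows from $hg_N^{r,s}h^{-1}=g_N^{hr,hs}$ in $\vG_{N,k}$) and the orthogonality of the $p_N^{c,d}$, only the term $h=1$ survives the sum (again by primitivity), giving $\psi\circ\phi = (p_N^{a,b})_{|k}$. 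The two identities then show that $\phi$ and $\psi$ restrict to mutually inverse isomorphisms between $\vphi_{K_N/k*}X_N^{a,b}$ and $X_N^{[a,b]_k}$; well-definedness is automatic, since for instance $p_N^{[a,b]_k}\circ\phi = \phi\circ\psi\circ\phi = \phi\circ(p_N^{a,b})_{|k} = \phi$.

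The principal obstacle is the computation of $\psi\circ\phi$ in (ii): it requires both (1) the Galois-cover decomposition of $\pi^*\circ\pi_*$ as a sum over $H_{N,k}$, and (2) the conjugation action of $H_{N,k}$ on the projectors $p_N^{a,b}$ via the index permutation. Primitivity of $(a,b)$ is used essentially only at the final step, to guarantee a trivial stabilizer so that a single term survives the orthogonality.
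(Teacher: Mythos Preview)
Your proof is correct and follows essentially the same strategy as the paper. For (i) the paper simply says ``clear from the definitions,'' and your explicit verification that the pulled-back projector splits as $\sum_{(c,d)\in[a,b]_k}p_N^{c,d}$ is exactly what underlies that remark. For (ii) you and the paper use the same pair of maps $p_N^{a,b}\circ\pi^*$ and $\pi_*\circ p_N^{a,b}$ (the paper suppresses the ${}_{|k}$), compute $\phi\circ\psi$ via \eqref{corr-push} and \eqref{definition-p_N^{[a,b]_k}}, and compute $\psi\circ\phi$ via the Galois decomposition $\pi^*\circ\pi_*=\sum_{h}h$ together with the intertwining of $h$ with the projectors; primitivity enters at the identical spot (trivial stabilizer). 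Your treatment of well-definedness via $p_N^{[a,b]_k}\circ\phi=\phi\circ\psi\circ\phi=\phi$ is a bit slicker than the paper's square-chasing, but the substance is the same.
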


\begin{proof}
(i) is clear from the definitions. 

We prove (ii). Recall that $\vphi_{K_N/k *}X_N^{a,b} = ({X_{N,K_N}},{p_N^{a,b}})$, 
viewed as a scheme and a correspondence over $k$. 
Consider the following diagram in $\sM_{k,E_N}$ with $\pi=\pi_N$ viewed as a $k$-morphism: 
\begin{equation*}\label{four-3}
\xymatrix{
h(X_N) \ar[d]_{p_N^{[a,b]_k}}\ar[r]^{p_N^{a,b}\circ\pi^*}  & h(X_{N,K_N}) \ar[r]^{\pi_*\circ p_N^{a,b}} \ar[d]_{p_N^{a,b}} & h(X_N) \ar[r]^{p_N^{a,b}\circ\pi^*} \ar[d]_{p_N^{[a,b]_k}} & h(X_{N,K_N}) \ar[d]_{p_N^{a,b}} \\
h(X_N)\ar[r]^{p_N^{a,b}\circ \pi^*}
& h(X_{N,K_N}) \ar[r]^{\pi_*\circ p_N^{a,b}} & h(X_N)  \ar[r]^{p_N^{a,b}\circ\pi^*}  & h(X_{N,K_N}) .
}\end{equation*}
It suffices to show the commutativity of the first square after applying $\circ \pi_*$. 
Then, using \eqref{corr-pull} and \eqref{corr-push}, we have  
\begin{align*}
p_N^{a,b} \circ \pi^*\circ p_N^{[a,b]_k} \circ \pi_* 
= p_N^{a,b} \circ (\pi\times\pi)^* p_N^{[a,b]_k}  
= p_N^{a,b} \circ \sum_{h \in H_{N,k}} h \circ p_N^{a,b} \circ \sum_{h \in H_{N,k}} h \\
= \sum_{h \in H_{N,k}} (h \circ p_N^{ha,hb}) \circ p_N^{a,b} \circ \sum_{h \in H_{N,k}} h 
= p_N^{a,b} \circ \sum_{h \in H_{N,k}} h  
=p_N^{a,b} \circ p_N^{a,b} \circ \pi^*\circ  \pi_* .
\end{align*}
The second square is again the ``transpose" of the first one.

Now, since $(a,b)$ is primitive, we have by definition
\begin{equation}\label{pi-p-pi}
\pi_* \circ p_N^{a,b} \circ\pi^* = (\pi \times \pi)_* p_N^{a,b} = p_N^{[a,b]_k},
\end{equation}
hence $\pi_* \circ p_N^{a,b}\circ\pi^* \circ p_N^{[a,b]_k} = p_N^{[a,b]_k}$. 
On the other hand, we have
$$p_N^{a,b}\circ\pi^* \circ \pi_* \circ p_N^{a,b}
= p_N^{a,b} \circ \sum_{h \in H_{N,k}} h \circ p_N^{a,b} 
= \sum_{h \in H_{N,k}} h p_N^{ha,hb} \circ p_N^{a,b} = p_N^{a,b}.$$
Therefore, 
$$p_N^{a,b}\circ \pi^* \colon  X_N^{[a,b]_k} \lra \vphi_{K_N/k *}X_N^{a,b}, \quad
\pi_*  \colon \vphi_{K_N/k *}X_N^{a,b} \lra X_N^{[a,b]_k}$$ 
are isomorphisms inverse to each other. 
\end{proof}

Combining Corollary \ref{cor-level} (ii) and Proposition \ref{prop-field} (ii) we obtain: 
\begin{corollary}
We have an isomorphism in $\sM_{k,E_N}${\rm :} 
$$h^1(X_N) \simeq \bigoplus_{N'|N} \bigoplus_{[a',b']_k \in H_{N',k}\backslash I_{N'}^\prim }
\vphi_{K_{N'}/k *} X_{N'}^{a',b'},$$
where $(a',b')$ is any representative of $[a',b']_k$. 
\end{corollary}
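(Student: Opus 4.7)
The plan is to simply chain together the two results cited just before the corollary, being careful about the coefficient fields.

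First, I would invoke Corollary \ref{cor-level} (ii), which already gives
$$h^1(X_N) \simeq \bigoplus_{N'|N} \bigoplus_{[a',b']_k \in H_{N',k}\backslash I_{N'}^\prim} X_{N'}^{[a',b']_k}$$
in $\sM_{k,E_{N,k}}$. Since $E_{N,k} \subset E_N$, the subcategory convention set up in \S 2.1 (or equivalently, extension of scalars from $\Q$-coefficients) lets me view this same isomorphism as an isomorphism in $\sM_{k,E_N}$.

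Next, for each divisor $N'$ of $N$ and each class $[a',b']_k \in H_{N',k}\backslash I_{N'}^\prim$, I pick a representative $(a',b')$. Because $(a',b')$ is primitive, Proposition \ref{prop-field} (ii) applied at level $N'$ yields an isomorphism
$$\varphi_{K_{N'}/k*}X_{N'}^{a',b'} \lisom X_{N'}^{[a',b']_k}$$
in $\sM_{k,E_{N'}}$, hence a fortiori in $\sM_{k,E_N}$ (since $E_{N'}\subset E_N$). Substituting these isomorphisms into the right-hand side of the displayed decomposition of $h^1(X_N)$ gives the desired formula.

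The only thing I would make sure to check is that the resulting motive on the right does not depend (up to canonical isomorphism) on the choice of representative $(a',b')$ within a class $[a',b']_k$: if $(c',d') = (ha', hb')$ for some $h \in H_{N',k}$, then Lemma \ref{projector-smaller} (i) gives $h(p_{N'}^{a',b'}) = p_{N'}^{h^{-1}a', h^{-1}b'}$, and composing with the Galois action exhibits an isomorphism $\varphi_{K_{N'}/k*}X_{N'}^{a',b'}\simeq \varphi_{K_{N'}/k*}X_{N'}^{c',d'}$ in $\sM_{k,E_N}$, so any choice of representative works. There is no real obstacle here: the statement is essentially a bookkeeping combination of the two previous results, the subtlest point being the implicit passage of coefficients from $E_{N,k}$ (resp.\ $E_{N'}$) up to $E_N$.
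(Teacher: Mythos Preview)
Your proposal is correct and follows exactly the approach indicated in the paper, which simply states that the corollary is obtained by combining Corollary \ref{cor-level} (ii) and Proposition \ref{prop-field} (ii). Your added care about the coefficient fields and the independence of the choice of representative is a welcome elaboration on what the paper leaves implicit.
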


\section{$L$-functions of Fermat motives}
  
\subsection{$\ell$-adic realization of motives}

For a scheme $X$ and a prime number $\ell$ invertible on $X$, 
let $\mu_{\ell^n}$ be the \'etale sheaf of $\ell^n$-th roots of unity. 
For an integer $m$, we write $\Z/\ell^n\Z(m)  =\mu_{\ell^n}^{\ot m}$. 
The {\em $\ell$-adic \'etale cohomology group} is defined by  
$$
H_\et^i(X,\Ql(m)) = \Ql \ot_\Zl  \varprojlim_{n} H_\et^i(X,\Z/\ell^n\Z(m)).  
$$
Let $k$ be a field with $\mathrm{char}(k) \neq \ell$, and $\ol k$ be an algebraic closure of $k$. 
If $X \in \sV_k$, then
$$H_\ell^i(X)(m) := H^i_\et(X_{\ol k},\Ql(m)). $$
is a finitely generated $\Ql$-module on which 
 the absolute Galois group $\Gal(\ol k/k)$ acts continuously. 
If $X$ is a projective smooth curve over $k$, then 
$H^1_\ell(X)(1)$ is isomorphic to the $\ell$-adic Tate module of the Jacobian variety of $X$.

Let $X$, $Y \in \sV_k$ with $d = \dim X$, $d'=\dim Y$. 
For a correspondence  $f \in \Corr^r(X,Y) = \Q \ot_\Z \CH^{d +r}(X \times Y)$, 
let 
$$[f] \in H_\ell^{2(d+r)}(X \times Y)(d+r)$$ 
denote the cycle class of $f$. 
Consider the composition: 
\begin{multline*}
H_\ell^i(X)(m) \os{\mathrm{pr}_X^*}{\lra} H_\ell^i(X \times Y)(m) \\
\os{\cup [f]}{\lra} H_\ell^{i+2(d +r)}(X \times Y)(m+d+r)  \os{\mathrm{pr}_{Y*} }{\lra} H_\ell^{i+2r}(Y)(m+r),   
\end{multline*}
which we also denote by $f$. 
Here, $\cup$ is the cup product and 
the homomorphism $\mathrm{pr}_{Y*}$ is the dual of 
\begin{equation*}
H_\ell^{2d'-i-2r}(Y)(d'-m-r)  \os{\mathrm{pr}_Y^*}{\lra} 
H_\ell^{2d'-i-2r}(X \times Y)(d'-m-r)
\end{equation*}
via the Poincar\'e duality. 

For a motive $M=(X,p,m) \in \sM_{k,E}$, its $\ell$-adic cohomology is defined by
\begin{equation*}
H_\ell^i(M) = p (E \ot_\Q H_\ell^i(X)(m)).
\end{equation*}
Then, $H_\ell = \oplus_i H_\ell^i$ extends to the covariant {\em $\ell$-adic realization functor} 
\begin{equation*}
H_\ell \colon \sM_{k,E} \lra \Mod_{E_\ell[\Gal(\ol k/k)]}
\end{equation*}
to the category of modules over 
$$E_\ell :=E\ot_\Q \Ql$$ 
with Galois action. 
If $X \in \sV_k$ is a curve, then we have 
$H_\ell(h^i(X)) = H^i_\ell(X)$. 
Note that 
$$H_\ell(M(r))= H_\ell(M)(r) := H_\ell(M) \ot_\Ql \Ql(1)^{\ot r}$$
where $\Ql(1)$ is a one-dimensional $\Ql$-vector space on which $\Gal(\ol k/k)$ acts via the 
$\ell$-adic cyclotomic character. 

For any (resp. finite separable) extension $k'/k$ in $\ol k$,   
we have commutative diagrams of functors 
\begin{equation}\label{diagram-realization}
\begin{split}
\xymatrix{
\sM_{k,E} \ar[r]^(.4){H_\ell} \ar[d]_{\vphi_{k'/k}^*} &  \Mod_{\El[\Gal(\ol k/k)]} \ar[d]^{\Res_{k'/k}} \\ 
\sM_{k',E} \ar[r]^(.4){H_\ell} &   \Mod_{\El[\Gal(\ol k/k')]}
} 
\quad
\xymatrix{
\sM_{k',E} \ar[r]^(.4){H_\ell} \ar[d]_{\vphi_{k'/k,*}} & \Mod_{\El[\Gal(\ol k/k')]} \ar[d]^{\Ind_{k'/k}} \\
\sM_{k, E} \ar[r]^(.4){H_\ell}  &  \Mod_{\El[\Gal(\ol k/k)]} 
}
\end{split}\end{equation}
where $\Res_{k'/k}$ is the restriction of the Galois action, and 
$$\Ind_{k'/k}(V) =  \El[\Gal(\ol k/k)]  \ot_{\El[\Gal(\ol k/k')]} V$$
is the induced Galois module.

\subsection{$L$-functions of motives}

Let $k$ be a number field and $\sO_k$ be its integer ring. 
For a finite place $v$ of $k$, let $\sO_v$, $k_v$ be the completion of $\sO_k$, $k$, respectively, 
and $\F_v$ be the residue field; put $N(v)=\sharp \F_v$. 
Let 
$I_v \subset D_v \subset \Gal(\ol k/k)$
be the inertia and the decomposition subgroups at $v$, respectively, and 
$\Fr_v \in \Gal(\ol{\F_v}/\F_v) \simeq D_v/I_v$
be the geometric Frobenius of $\F_v$, i.e. the inverse of the 
the $N(v)$-th power Frobenius map.  

Let $E$ be a number field. For a prime number $\ell$, we have a natural decomposition 
\begin{equation*}
E_\ell = \prod_{\l | \ell} E_\l,
\end{equation*}
where $\l$ runs through the places of $E$ over $\ell$, and $E_\l$ is the completion.  
For an $E_\ell$-module $V$, let 
\begin{equation*}
V = \bigoplus_{\l |\ell} V_\l, \quad V_\l := E_\l \ot_{E_\ell} V
\end{equation*}
be the corresponding decomposition into $E_\l$-modules.  

Let $M=(X,p,m) \in \sM_{k,E}$ be a motive. For each finite place $v$ of $k$, 
choose $\ell \neq \mathrm{char}(\F_v)$ and a place $\l | \ell$ of $E$.  
Then, the zeta polynomial of $M$ at $v$ is defined by 
\begin{equation*}
P_v(M,T) = \det\left(1-\Fr_vT; H_\l(M)^{I_v}\right)  \ \in E_\l[T]
\end{equation*}
where we write $H_\l(M)= H_\ell(M)_\l$.  

\begin{conjecture}[$\ell$-independence]\label{l-independence}
Let $M \in \sM_{k,E}$. For any finite place $v$ of $k$, 
$P_v(M,T) \in \sO_E[T]$, and is independent of the choice of $\ell$ and $\l$.
\end{conjecture}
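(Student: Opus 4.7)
Although Conjecture \ref{l-independence} is open in general, the plan is to verify it in the cases relevant to this paper, namely for the Fermat motives $X_N^{a,b}$ and $X_N^{[a,b]_k}$ together with their Tate twists and direct sums. First, I would reduce to the primitive case over $K_N$: by Propositions \ref{prop-level} and \ref{prop-field}, together with the compatibility of $H_\ell$ with $\vphi_{K_N/K_{N'}}^*$ and $\vphi_{K_N/k*}$ in \eqref{diagram-realization}, every Fermat motive has $\ell$-adic realization obtained from that of a primitive $X_{N'}^{a',b'}$ by restriction or induction of Galois modules. Both operations preserve integrality and $\ell$-independence of local factors: restriction trivially, and induction because $P_v$ of an induced representation factors as a product of $P_w$'s over the places $w$ of $K_{N'}$ above $v$. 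It therefore suffices to treat $X_N^{a,b}$ for $(a,b) \in I_N^\prim$ as a motive over $K_N$.

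Next, at a finite place $w$ of $K_N$ with $w \nmid N$ (good reduction), smooth and proper base change shows that $I_w$ acts trivially on $H^1_\l(X_N^{a,b})$. By Weil's classical computation, the geometric Frobenius $\Fr_w$ acts on this one-dimensional $E_{N,\l}$-line as multiplication by a Jacobi sum
\[
j_N^{a,b}(w) = -\sum_{u \in \F_w^*,\, u \neq 1} \chi_w^a(u)\,\chi_w^b(1-u),
\]
where $\chi_w \colon \F_w^* \to \mu_N \subset E_N^*$ is the $N$-th power residue character normalized via the fixed $\zeta_N$. This value lies in $\sO_{E_N}$, being a sum of roots of unity, and is defined intrinsically in terms of residue field arithmetic, hence is manifestly independent of $\ell$ and $\l$. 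Thus $P_w(X_N^{a,b},T) = 1 - j_N^{a,b}(w)T$ has the required properties.

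The main obstacle is the bad reduction case $w \mid N$, where even the existence of $\ell$-independent local factors is open for general motives. In the Fermat case the problem becomes tractable: one uses an explicit semistable model of $X_{N,\sO_w}$, whose special fiber and residual $G_N$-action are classical. The inertia-invariants on each $\theta_N^{a,b}$-isotypic component of $H^1_\l$ can then be read off from the components of the special fiber together with the monodromy filtration, and via the tower structure used in Proposition \ref{prop-level} the analysis reduces to the analogous problem on lower-degree Fermat curves together with character sums over finite cyclic quotients. This exhibits $P_w(X_N^{a,b},T)$ as a polynomial whose coefficients are determined by finite combinatorial and character-theoretic data with values in $\sO_{E_N}$, hence independent of $\ell$. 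Descending back to $k$ via the induction step of the first paragraph then completes the verification for all Fermat motives considered in this paper.
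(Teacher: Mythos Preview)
Your reduction to the primitive case over $K_N$ and your treatment of good places $w\nmid N$ are essentially the paper's approach: good reduction plus the identification of $\Fr_w$ with multiplication by a Jacobi sum on the one-dimensional $E_{N,\l}$-line is exactly what Theorem~\ref{fermat-polynomial} and the discussion around \eqref{zeta-zeta} give.

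The bad reduction step, however, is both different from the paper and too vague to count as a proof. You propose to work with an explicit semistable model of $X_N$ over $\sO_w$ and to read off the inertia invariants from the components of the special fiber and the monodromy filtration. Semistable models of Fermat curves over $p$-adic fields are genuinely nontrivial objects, and even granting one, you have not said what the resulting polynomial $P_w(X_N^{a,b},T)$ actually is, only that it would be ``determined by finite combinatorial and character-theoretic data''. That is not enough to conclude $\ell$-independence without carrying the computation through.

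The paper's argument (Proposition~\ref{bad-realization}) is much simpler and avoids semistable models entirely. It uses the naive singular model defined by the same Fermat equation over $\sO_w$. Writing $N=p^eN'$ with $p=\mathrm{char}(\F_v)$, the reduced special fiber is canonically $X_{N',\F_v}$, and since a nilpotent thickening induces an isomorphism on \'etale cohomology, one gets $H^1_\et(X_{N,\ol\F_v},\Ql)\simeq H^1_\et(X_{N',\ol\F_v},\Ql)$. Combined with the general surjection $H^1_\et(X_{N,\ol\F_v},\Ql)\twoheadrightarrow H^1_\et(X_{N,\ol k},\Ql)^{I_v}$ and the good reduction of $X_{N'}$, this forces $H^1_\ell(X_N)^{I_v}=\bigoplus_{(a',b')\in I_{N'}}H_\ell(X_N^{a'p^e,b'p^e})$. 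Hence for primitive $(a,b)$ the inertia invariants vanish and $P_w(X_N^{a,b},T)=1$, which is trivially in $\sO_{E_N}[T]$ and independent of $\ell$.
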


Assume the conjecture for $M$. 
For each embedding $\s \colon E \hookrightarrow \C$, the {\em $L$-factor} at $v$ is  
defined by
\begin{equation*}
L_v(\s,M,s) = \s P_v(M,N(v)^{-s})^{-1}, 
\end{equation*}
and the {\em $L$-function} is defined by
\begin{equation*}
L(\s,M,s) = \prod_{v} L_v(\s,M,s). 
\end{equation*}
We denote by $L(M,s)$ the system $(L(\s,M,s))_\s$, which may be viewed as an $E_\C$-valued function,   
where
\begin{equation*}
E_\C := E \ot_\Q \C = \prod_{\s\colon E \hookrightarrow \C} \C.
\end{equation*}
Note the relation
\begin{equation*}
L(M(r),s)=L(M,s+r). 
\end{equation*}
We also define the $L$-function $L(h^i(M),s)$ of the (conjectural) $i$-th motive 
to be the $L$-function associated to its ``realization" $H^i_\ell(M)$. 

We shall use the following proposition, which follows from \eqref{diagram-realization}.   
\begin{proposition}\label{induce-L}
Let $k'/k$ be a finite extension of number fields and 
suppose that  a motive $M \in \sM_{k',E}$ satisfies Conjecture \ref{l-independence}. 
Then,  
$\vphi_{k'/k *}M$ satisfies Conjecture \ref{l-independence}
and we have $$L(\vphi_{k'/k *}M, s) = L(M,s).$$
\end{proposition}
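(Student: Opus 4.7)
The plan is to deduce everything from the commutative diagram \eqref{diagram-realization}, which identifies
$$H_\ell(\vphi_{k'/k *}M) \simeq \Ind_{k'/k} H_\ell(M)$$
as $E_\ell[\Gal(\ol k/k)]$-modules. Once this identification is in hand, the statement reduces to the classical Artin-style computation of local $L$-factors of an induced Galois representation, applied $\ell$-adically.

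Fix a finite place $v$ of $k$, a prime $\ell \neq \mathrm{char}(\F_v)$, and a place $\l$ of $E$ above $\ell$. The main step is the local factorization
$$P_v(\vphi_{k'/k *}M, T) = \prod_{w | v} P_w(M, T^{f(w/v)}),$$
where $w$ ranges over the places of $k'$ above $v$ and $f(w/v)$ is the residue degree. To prove this, I would apply Mackey's decomposition to the restriction $\Res_{D_v} \Ind_{k'/k} H_\l(M)$: a choice of double coset representatives for $D_v \backslash \Gal(\ol k/k) / \Gal(\ol k/k')$ corresponds bijectively to the places $w \mid v$ of $k'$, and the corresponding summand is the induction from $D_v \cap \Gal(\ol k/k')^{\sigma_w}$ (a copy of the decomposition group at $w$) to $D_v$ of an appropriate conjugate of $H_\l(M)$. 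A standard calculation on induced modules then shows that on the $I_v$-invariants of each summand, $\Fr_v$ acts with characteristic polynomial $P_w(M, T^{f(w/v)})$, giving the displayed identity.

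From this local identity, $\ell$-independence is immediate: by hypothesis each $P_w(M, T)$ lies in $\sO_E[T]$ and is independent of $\ell$ and $\l$, hence so is the product over $w \mid v$, which equals $P_v(\vphi_{k'/k *}M, T)$. For the $L$-function, substituting $T = N(v)^{-s}$ and using $N(w) = N(v)^{f(w/v)}$ gives
$$L_v(\s, \vphi_{k'/k *}M, s) = \prod_{w | v} L_w(\s, M, s)$$
for each embedding $\s \colon E \hookrightarrow \C$. Taking the product over finite places $v$ of $k$ and regrouping by places $w$ of $k'$ yields $L(\vphi_{k'/k *}M, s) = L(M, s)$.

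The only nontrivial point is the Mackey computation of the Frobenius action on the $I_v$-invariants of the induced module; this is the exact analogue of the invariance of Artin $L$-functions under induction and transfers verbatim to the $\ell$-adic setting because the argument is purely representation-theoretic. Everything else, including the $\ell$-independence conclusion, is formal once the local factorization is established.
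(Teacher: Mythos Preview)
Your proposal is correct and follows exactly the approach the paper indicates: the paper's proof is the single line ``which follows from \eqref{diagram-realization}'', and what you have written is precisely the standard Artin-formalism computation (Mackey decomposition, identification of double cosets with places $w\mid v$, and the local identity $P_v(\vphi_{k'/k *}M,T)=\prod_{w\mid v} P_w(M,T^{f(w/v)})$) that this reference is meant to invoke. There is nothing to add; your version simply spells out the representation-theoretic details that the paper leaves implicit.
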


If $X \in \sV_k$, it has good reduction at almost all $v$, 
i.e. there exists a proper smooth model over $\sO_v$ with generic fiber $X \times_k k_v$; 
denote the special fiber by $X_{\F_v}$.  
We say that 
$M=(X,p,m)$ has good reduction at $v$ 
if $X$ and all the components of a cycle representing $p$ have good reductions.   
For such $M$ and $v$, $H_\ell(M)$ is unramified at $v$, i.e. the action of $I_v$ is trivial, 
and there exists an isomorphism
$$H_\ell(M) \simeq H_\ell(M_{\F_v})$$
of $\Gal(\ol{\F_v}/\F_v)$-modules (see \cite{sga4demi}). 
Therefore, 
\begin{equation}\label{zeta-zeta}
P_v(M,T) = P(M_{\F_v},T):= \det(1-\Fr_{\F_v}T; H_\l(M_{\F_v}))
\end{equation}
and Conjecture \ref{l-independence} holds \cite{deligne-weil1}. 

Suppose that $H_\ell(M)=H^i_\ell(M)$. 
Then, by the Weil conjecture proved by Deligne \cite{deligne-weil1}, for good $v$, 
$P(M_{\F_v},T)$  is of pure weight $w=i-2m$, i.e. any complex conjugate of a reciprocal root has complex absolute value $N(v)^{w/2}$. 
Therefore, $L(M,s)$ except for the bad factors 
converges absolutely for $\Re(s)> w/2+1$ and has no zero nor pole in the region. 
Further, the weight-monodromy conjecture \cite{deligne-weil2} implies that the bad factors either
have no zero nor pole in the same region (cf. \cite{schneider}).  

\subsection{Jacobi sums} 

Let $K$ be a finite field of characteristic $p$ with $q$ elements which contains 
the $N$-th roots of unity, i.e. $N \mid q-1$.  
Fix an isomorphism 
\begin{equation}\label{mumu} 
\mu_N(K) \os{\sim}{\lra} \mu_N(E_N).
\end{equation}
Composing the $\frac{q-1}{N}$-th power map 
$K^* \ra \mu_N(K)$
with \eqref{mumu}, we obtain a character 
\begin{equation*}
\chi_N \colon K^* \lra \mu_N(E_N)
\end{equation*}
of exact order $N$. 
We extend $\chi_N^a$ to the whole $K$ by setting $\chi_N^a(0)=0$. 
For $(a,b) \in G_N$, the {\em Jacobi sum} is defined by 
\begin{equation*}
j(\chi_N^a,\chi_N^b) = - \sum_{x,y \in K, x+y=1} \chi_N^a(x) \chi_N^b(y) \ \in E_N.
\end{equation*}

Fix an algebraic closure $\ol K$ of $K$ and let $K_n/K$ be the subextension of degree $n$. 
The character $\chi_{N,n} \colon K_n^* \ra E_N^*$ is defined as above via 
$\mu_N(K_n) = \mu_N(K) \simeq \mu_N(E_N)$, or equivalently, $\chi_{N,n} =\chi_N \circ N_{K_n/K}$. 
Then, for any $(a,b) \in I_N$, we have the Davenport-Hasse relation
\begin{equation}\label{davenport-hasse}
j(\chi_{N,n}^a,\chi_{N,n}^b) = j(\chi_N^a,\chi_N^b)^n.
\end{equation}

\subsection{Zeta polynomials of Fermat motives}

Let $k$ be a finite field of characteristic $p \nmid N$, $\ol k$ be an algebraic closure of $k$ and 
put $K=K_N=k(\mu_N)$.   
By choosing $\z_N \in K_N$ and $\xi_N \in E_N$, the motives 
$X_N^{a.b} \in \sM_{K_N,E_N}$ and $X_N^{[a,b]_k} \in \sM_{k,E_{N,k}}$
are defined. Note that $H_{N,k} \subset (\Z/N\Z)^*$ is the cyclic subgroup generated by $\sharp k$. 
Fix the isomorphism \eqref{mumu} by assigning $\z_N$ to $\xi_N$. 
Then the character $\chi_N$ and the Jacobi sums are defined. 

We calculate the zeta polynomials of $X_N^{a,b}$ and $X_N^{[a,b]_k}$. 
We only consider the case $(a,b) \in I_N$, whereas the other cases are  
obvious by Propositions \ref{decomposition-X_K} and \ref{decomposition-X}.  
The key is the Grothendieck's fixed point formula (cf. \cite{sga4demi}): 
for an endomorphism $F$ of $X$ over $k$, we have 
\begin{equation}\label{fixed-point-formula}
\sharp X(\ol k)^F = \sum_i (-1)^i \Tr(F; H^i_\ell(X)).
\end{equation}

\begin{theorem}\label{fermat-polynomial}\ 
\begin{enumerate}
\item If $(a,b) \in I_N$, then $P(X_N^{a,b}, T) = 1- j(\chi_N^a,\chi_N^b) T$. 
\item If $(a,b) \in I^\prim_N$, then $P(X_N^{[a,b]_k}, T) = 1- j(\chi_N^a,\chi_N^b) T^{\sharp[a,b]_k}$. 
\end{enumerate}
In particular, these do not depend on the choice of $\l$ and belong to $\sO_{E_{N,k}}[T]$. 
\end{theorem}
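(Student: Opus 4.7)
My plan for (i) is to apply the Grothendieck--Lefschetz trace formula to the morphisms $F_X\circ g_N^{r,s}$ on $X_{N,\ol{K_N}}$ (where $F_X$ denotes the absolute $q$-th power Frobenius, $q=\sharp K_N$), and then use character orthogonality on $G_N$ to extract the $p_N^{a,b}$-component. By Proposition~\ref{decomposition-X_K}(iii), $H_\l(X_N^{a,b})$ is the image of $p_N^{a,b}$ on $E_\l\ot_\Ql H^1_\ell(X_{N,K_N})$; the $\sharp I_N$ distinct projectors $\{p_N^{a,b}\}_{(a,b)\in I_N}$ decompose this rank-$2g=\sharp I_N$ module, so each summand is one-dimensional and the zeta polynomial takes the form $1-\alpha T$ with $\alpha:=\Tr(\Fr_v;H_\l(X_N^{a,b}))$. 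It therefore remains to identify $\alpha$ with $j(\chi_N^a,\chi_N^b)$.

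Since every $g=g_N^{r,s}$ acts trivially on $H^0\simeq\Ql$ and on $H^2\simeq\Ql(-1)$, the fixed-point formula \eqref{fixed-point-formula} gives
\[
\Tr(\Fr_v\,g;H^1_\ell) = q+1-\sharp\{P\in X_N(\ol{K_N}) : F_X(g(P))=P\}.
\]
Averaging with the weights $\tfrac{1}{N^2}\xi_N^{-ar-bs}$ defining $p_N^{a,b}$ extracts $\alpha$, and because $(a,b)\in I_N$ forces $(a,b)\neq(0,0)$, the ``$q+1$'' drops out by orthogonality. To compute the remaining weighted sum I would partition $X_N(\ol{K_N})$ into the generic locus $u:=x^N,\ v:=y^N\in\F_q^*$ with $u+v=1$, and the three boundary loci $x=0$, $y=0$, $z_0=0$. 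On the generic locus the fixed-point condition becomes $x^{q-1}=\z_N^{-r}$, $y^{q-1}=\z_N^{-s}$, and for each such $u$ there are $N$ (respectively $0$) solutions for $x$ according as $\chi_N(u)=\xi_N^{-r}$ or not; the identity $\sum_r\xi_N^{-ar}\mathbf{1}(\chi_N(u)=\xi_N^{-r})=\chi_N^a(u)$ then collapses the generic contribution to $-\sum_{u+v=1,\,u,v\neq 0}\chi_N^a(u)\chi_N^b(v)=j(\chi_N^a,\chi_N^b)$. The three boundary contributions depend only on $s$, only on $r$, or only on $s-r$ respectively, and are annihilated by summing against $\xi_N^{-ar-bs}$ thanks to $a\neq 0$, $b\neq 0$, and $a+b\neq 0$; this proves (i). For (ii), Proposition~\ref{prop-field}(ii) together with the right square of \eqref{diagram-realization} gives $H_\l(X_N^{[a,b]_k})\simeq\Ind_{\Gal(\ol k/K_N)}^{\Gal(\ol k/k)}H_\l(X_N^{a,b})$; for $(a,b)\in I_N^\prim$ the subgroup has index $d:=\sharp[a,b]_k=[K_N:k]$ and is generated by $\Fr_k^d=\Fr_{K_N}$, so the standard companion-matrix computation yields $\det(1-\Fr_k T;\Ind V)=1-\alpha T^d$ when $V$ is one-dimensional with $\Fr_{K_N}$-eigenvalue $\alpha$, giving (ii) upon substituting $\alpha=j(\chi_N^a,\chi_N^b)$.

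For the concluding ``in particular'' statement, the substitution $(x,y)\mapsto(x^{q_k},y^{q_k})$ preserves $x+y=1$ and identifies $j(\chi_N^{aq_k},\chi_N^{bq_k})$ with $j(\chi_N^a,\chi_N^b)$; coupled with the Galois identity $\s_{q_k}(j(\chi_N^a,\chi_N^b))=j(\chi_N^{aq_k},\chi_N^{bq_k})$ this shows $j\in\sO_{E_{N,k}}$, and $\ell$- and $\l$-independence is then manifest. The main obstacle will be the combinatorial bookkeeping in the fixed-point enumeration together with the sign conventions: one must track carefully whether $\Fr_v$ is geometric or arithmetic Frobenius, how the identification $\mu_N(K_N)\simeq\mu_N(E_N)$ fixed by $\z_N\leftrightarrow\xi_N$ interacts with the Lefschetz count, and how the contributions from the $N$ points at infinity and the degenerate solutions with $x=0$ or $y=0$ are annihilated by orthogonality on $G_N$, so that the leading character sum emerges as exactly $+j(\chi_N^a,\chi_N^b)$ rather than a conjugate or sign-twisted variant.
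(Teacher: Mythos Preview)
Your approach is essentially the paper's: apply the Grothendieck--Lefschetz trace formula to $\Fr_K\circ g$, average against the character defining $p_N^{a,b}$, split the fixed-point count into the affine locus (producing the Jacobi sum) and the three boundary loci $x=0$, $y=0$, $z_0=0$ (killed respectively by $a\neq 0$, $b\neq 0$, $a+b\neq 0$), and deduce (ii) from (i) via the induced-module identification of Proposition~\ref{prop-field}(ii). The one substantive difference is that you compute only $\Tr(\Fr_K;H_\l(X_N^{a,b}))$ and invoke one-dimensionality to conclude $P=1-\alpha T$, whereas the paper computes $\Tr(\Fr_K^n;-)$ for all $n\geq 1$ and then needs the Davenport--Hasse relation \eqref{davenport-hasse} to sum the logarithmic series. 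Your shortcut is cleaner, but the justification you give for rank one---``$\sharp I_N$ projectors decompose a rank-$\sharp I_N$ module, so each summand is one-dimensional''---is incomplete: orthogonal idempotents summing to the identity only force the ranks to \emph{sum} to $\sharp I_N$, not each to equal $1$. You can close this gap either by observing that your own trace computation yields $j(\chi_N^a,\chi_N^b)\neq 0$ for every $(a,b)\in I_N$ (hence each rank is $\geq 1$, hence $=1$), or by comparison with singular cohomology, as the paper remarks after the Corollary to this theorem.
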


\begin{proof}
(i) First, by taking the logarithm, we have
\allowdisplaybreaks
\begin{align*}
&\log \det\bigl(1-\Fr_{K} T; H_\l^1(X_N^{a,b})\bigr)  \\
&=\log  \prod_{i=0,1,2} \det\bigl(1-\Fr_{K} T; H_\l^i(X_N^{a,b})\bigr)^{(-1)^{i+1}}  \\
&= \log  \prod_{i=0,1,2} \det\bigl(1-\Fr_{K} p_N^{a,b} T; H_\l^i(X_{N,K})\bigr)^{(-1)^{i+1}}\\
&=  \sum_{i=0,1,2} (-1)^{i} \sum_{n \geq 1} \frac{1}{n} \Tr\bigl((\Fr_K p_N^{a,b})^n; H_\l^i(X_{N,K})\bigr) T^n\\
&=  \sum_{i=0,1,2} (-1)^{i} \sum_{n \geq 1} \frac{1}{n} \Tr\bigl(\Fr_K^n p_N^{a,b}; H_\l^i(X_{N,K})\bigr) T^n\\
&=  \sum_{n \geq 1} \frac{1}{n}  \left(\frac{1}{N^2} \sum_{g \in G_N} \theta_N^{a,b}(g^{-1})  \sum_{i=0,1,2} (-1)^{i} \Tr\bigl(\Fr_K^n g; H_\l^i(X_{N,K})\bigr) \right) T^n . \\
\end{align*}
Then by \eqref{fixed-point-formula},  
the alternating sum of the traces equals
$$\vL(\Fr_K^n g) := \sharp\bigl\{P \in X_{N}(\ol k) \bigm| \Fr_K^ng(P)=P\bigr\}.$$ 
This is devided as $\vL(\Fr_K^n g)=\vL_0(\Fr_K^n g)+\vL_1(\Fr_K^n g)$ 
with
\begin{align*}
&\vL_0(\Fr_K^n g) := \sharp\bigl\{P \in U_{N}(\ol k) \bigm| \Fr_K^ng(P)=P\bigr\}, \\
&\vL_1(\Fr_K^n g) :=  \sharp\bigl\{P \in Z_{N}(\ol k) \bigm| \Fr_K^ng(P)=P\bigr\},  
\end{align*}
where $U_N$, $Z_N \subset X_N$ are the subschemes defined in \S 2.4. 

Let $q = \sharp K$. If $g=g_N^{-r,-s}$, then we have 
\begin{align*}
& \vL_0(\Fr_K^n g) \\
&= \sharp \bigl\{(x,y) \in \ol k^2 \bigm| x^N+y^N=1, x^{q^n} = \z_N^r x, y^{q^n}=\z_N^s y \bigr\} \\
&= \sum_{u+v=1}  \sharp\bigl\{x \in \ol k \bigm| x^N=u, x^{q^n}=\z_N^r x\bigr\} \sharp\bigl\{y \in \ol k \bigm| y^N=v, y^{q^n}=\z_N^s y\bigr\}. 
\end{align*}
Here, the sum is taken over $u$, $v \in K_n$, the extension of $K$ of degree $n$, 
since 
$u^{q^n}=x^{Nq^n}=(\z_N^rx)^N=x^N=u$. 
If $u \neq 0$, then  $x^N=u$ and $x^{q^n}=\z_N^r x$ imply that 
$u^{\frac{q^n-1}{N}} =x^{q^n-1}=\z_N^r$. Conversely, if $u^{\frac{q^n-1}{N}} =\z_N^r$, 
then any solution of $x^N=u$ satisfies $x^{q^n}=\z_N^r x$. 
Therefore, we have 
\begin{equation*}
\sharp\bigl\{x \in \ol k \bigm| x^N=u, x^{q^n}=\z_N^r x\bigr\} = \begin{cases}
N \cdot  \delta \bigl(u^{\frac{q^n-1}{N}}=\z_N^r\bigr) & \text{if $u \neq 0$} \\
1 & \text{if $u=0$} 
\end{cases} 
\end{equation*}
where $\delta (\mathrm{P})$ is $1$ (resp. $0$) if the statement $\mathrm{P}$ is true (resp. false). 
It follows that 
\begin{multline*}
\vL_0(\Fr_K^n g) 
= N^2 \sum_{u,v \in K_n^*, u+v=1} \delta \bigl(u^{\frac{q^n-1}{N}}=\z_N^r\bigr) \delta \bigl(v^{\frac{q^n-1}{N}}=\z_N^s\bigr) \\ 
+ N \delta(r=0) + N \delta(s=0).  
\end{multline*}
On the other hand, 
\begin{align*}
& \vL_1(\Fr_K^n g)\\
& =\sharp \bigl\{(x_0 : y_0) \in \P^1(\ol k) \bigm| 
x_0^N+y_0^N=0, (x_0^{q^n}:y_0^{q^n}) = (\z_N^rx_0 : \z_N^s y_0) \bigr\}\\
&= \sharp \bigl\{w \in \ol k^* \bigm| w^N=-1, w^{q^n}=\z_N^{r-s}w \bigr\} \\
&= N \delta\bigl((-1)^{\frac{q^n-1}{N}}=\z_N^{r-s}\bigr).
\end{align*}

Now, by the definition of $\chi_{N,n}$,  we have 
$\chi_{N,n}(u)=\xi_N^r$ for $r$ such that $u^{\frac{q^n-1}{N}}=\z_N^r$. 
Therefore, we have 
\begin{align*}
&\sum_{r,s} \theta_N^{a,b}(g_N^{r,s}) \sum_{u,v \in K_n^*, u+v=1} 
\delta \bigl(u^{\frac{q^n-1}{N}}=\z_N^r\bigr) \delta \bigl(v^{\frac{q^n-1}{N}}=\z_N^s\bigr)  \\
&=\sum_{u,v \in K_n^*, u+v=1} \chi^a_{N,n}(u)\chi^b_{N,n}(v)
= -j(\chi_{N,n}^a,\chi_{N,n}^b)=-j(\chi_N^a,\chi_N^b)^n 
\end{align*}
by \eqref{davenport-hasse}. On the other hand, since $(a,b) \in I_N$, 
\begin{multline*}
\sum_{r,s} \theta_N^{a,b}(g_N^{r,s}) \left( \delta(r=0)+\delta(s=0)+
\delta\bigl((-1)^{\frac{q^n-1}{N}}=\z_N^{r-s}\bigr)\right) \\
= \sum_s \xi_N^{bs} + \sum_r \xi_N^{ar} + (-1)^{\frac{q^n-1}{N}a} \sum_s \xi_N^{(a+b)s} =0.
\end{multline*}
We obtained  
$$\frac{1}{N^2} \sum_{g \in G_N} \theta_N^{a,b}(g^{-1})\vL(\Fr_K^n g) 
=-j(\chi_N^a,\chi_N^b)^n $$
and hence
$$P(X_N^{a,b},T)= \exp  \sum_{n \geq 1} \frac{-j(\chi_N^a,\chi_N^b)^n}{n} T^n 
= 1-j(\chi_N^a,\chi_N^b) T. $$

The statement (ii) follows from (i), Proposition \ref{prop-field} (ii) and the general fact 
 $$P(\pi_{k'/k*}M',T)=P(M',T^{[k':k]})$$
for finite fields $k'/k$ and $M' \in \sM_{k',E}$. 
The final assertion follows since $j(\chi_N^{ap},\chi_N^{bp}) = j(\chi_N^a,\chi_N^b)$. 
\end{proof}

\begin{remark}
For $(a,b) \in I_N$ not necessarily primitive, let $N=N'd$, $a=a'd$, $b=b'd$, with $(a',b') \in I^\prim_{N'}$. Then we have
$$
P(X_N^{[a,b]_k},T)=P(X_{N'}^{[a',b']_k},T) =1-j(\chi_{N'}^{a'},\chi_{N'}^{b'})T^{\sharp[a,b]_k}
$$
(recall that $\sharp[a,b]_k=\sharp[a',b']_k$). 
\end{remark}

\begin{corollary}
If $(a,b) \in I_N$, then 
$H_\ell(X_N^{a,b})$ is a free $E_{N,\ell}$-module of rank $1$ and 
$H_\ell(X_N^{[a,b]_k})$ is a free $(E_{N,k})_\ell$-module of rank $\sharp[a,b]_k$.  
\end{corollary}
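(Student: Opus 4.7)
The plan is to read off both rank assertions directly from the degree of the zeta polynomial given in Theorem \ref{fermat-polynomial}. The key observation is that, because we are working over a finite field, the absolute Galois group is topologically generated by Frobenius and the inertia action is trivial. Hence for every place $\l\mid\ell$ of the coefficient field we have
\[
P_v(M,T) \;=\; \det\bigl(1-\Fr T;\,H_\l(M)\bigr),
\]
and since $E_{N,\l}$ and $(E_{N,k})_\l$ are fields (completions of number fields at finite places), the degree of this characteristic polynomial equals $\dim_{E_{N,\l}}H_\l(M)$ (resp.\ $\dim_{(E_{N,k})_\l}H_\l(M)$).

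For part (i), Theorem \ref{fermat-polynomial}(i) gives the polynomial $1-j(\chi_N^a,\chi_N^b)T$ of degree $1$, so $H_\l(X_N^{a,b})$ is one-dimensional over $E_{N,\l}$ for every $\l\mid\ell$. Using the decompositions $E_{N,\ell}=\prod_{\l\mid\ell}E_{N,\l}$ and $H_\ell(X_N^{a,b})=\bigoplus_{\l\mid\ell}H_\l(X_N^{a,b})$, we conclude that $H_\ell(X_N^{a,b})$ is free of rank $1$ over $E_{N,\ell}$. For part (ii), Theorem \ref{fermat-polynomial}(ii) gives a polynomial of degree $\sharp[a,b]_k$, and the identical argument shows that $H_\ell(X_N^{[a,b]_k})$ is free of rank $\sharp[a,b]_k$ over $(E_{N,k})_\ell$.

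There is no serious obstacle here; the corollary is essentially a reformulation of the degree count in Theorem \ref{fermat-polynomial}. The only subtlety worth flagging is that freeness over the semisimple rings $E_{N,\ell}$ and $(E_{N,k})_\ell$ — each a finite product of fields — is automatic once the correct dimension is known on each factor, since a module over a finite product of fields is a direct sum of vector spaces and is therefore free of well-defined rank.
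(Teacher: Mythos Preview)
Your argument is correct and is exactly the derivation the paper intends: the corollary is stated without proof immediately after Theorem \ref{fermat-polynomial}, and the ranks are read off as the degrees of the zeta polynomials over each local field $E_{N,\l}$ (resp.\ $(E_{N,k})_\l$), which then assemble to freeness over the product ring. The only small point worth adding is that Theorem \ref{fermat-polynomial}(ii) is stated only for primitive $(a,b)\in I_N^\prim$, so for the second assertion with arbitrary $(a,b)\in I_N$ you should also invoke the Remark following that theorem (or Proposition \ref{prop-level}(ii)), which reduces to the primitive case while preserving $\sharp[a,b]_k$.
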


\begin{remark}
This also follows from the corresponding result for the singular cohomology (cf. \cite{otsubo})
and  the comparison theorem between \'etale and singular cohomology (Artin's theorem). 
\end{remark}

\subsection{$L$-functions of Fermat motives}

Now, let $k$ be a number field and $K_N =k(\mu_N) \subset \ol k$ as before.   
By choosing $\z_N \in \mu_N(K_N)$ and $\xi_N \in \mu_N(E_N)$, 
the motives $X_N^{a,b} \in \sM_{K_N,E_N}$ and $X_N^{[a,b]_k} \in \sM_{k,E_{N,k}}$  
are defined.   

By Proposition \ref{decomposition-X_K}, the cases $(a,b) \not\in I_N$ are easy: 
$$L(X_N^{0,0},s) = \z_{K_N}(s)\z_{K_N}(s-1), \quad 
L(X_N^{[0,0]_k},s) = \z_{k}(s)\z_{k}(s-1), $$
and $L(X_N^{a,b},s) =L(X_N^{[a,b]_k},s) =1$ if only one of $a$, $b$, $a+b$ is $0$. 
For $(a,b) \in I_N$, we are reduced to the primitive case over $K_N$: if $(a,b)=(a'd,b'd)$ with $N=N'd$, $(a',b') \in I_{N'}^\prim$, 
then we have
\begin{equation*}
L(X_N^{a,b},s) = L(X_{N',K_N}^{a',b'},s), \quad
L(X_N^{[a,b]_k},s)  = L(X_{N'}^{[a',b']_k},s) = L(X_{N'}^{a',b'},s)
\end{equation*}
by Propositions \ref{prop-level}, \ref{prop-field} and \ref{induce-L}.    

If $v \nmid N$ is a place of $K_N$, we have a canonical isomorphism 
$$\mu_N(K_N) \os{\simeq}{\lra} \mu_N(\F_v).$$
By abuse of notation, we also denote the image of $\z_N$ by the same letter. 
Then, using $\z_N$ and $\xi_N$, the motive $X_{N,\F_v}^{a,b} \in \sM_{\F_v,E_N}$ is defined.  
The  character 
$$\chi_{N,v}\colon \F_v^* \lra E_N^*$$
and the Jacobi sum 
\begin{equation*}
j_{N}^{a,b}(v) := j(\chi_{N,v}^a,\chi_{N,v}^b) 
\end{equation*}
are defined as in \S 3.3. 
At $v \nmid N$, the motive $X_N^{a,b}$ has good reduction: 
$(X_N^{a,b})_{\F_v}=X_{N,\F_v}^{a,b}$. 
By \eqref{zeta-zeta} and Theorem \ref{fermat-polynomial}, we have: 

\begin{proposition} Let $(a,b) \in I_N$ and $v$ be a finite place of $K_N$ not dividing $N$. Then we have 
$$P_v(X_N^{a,b},T)= 1-j_N^{a,b}(v) T.$$
\end{proposition}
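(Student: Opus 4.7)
The proposition is a direct consequence of the good reduction property combined with Theorem \ref{fermat-polynomial}. My plan is to verify that, at a place $v \nmid N$, the motive $X_N^{a,b} \in \sM_{K_N, E_N}$ has good reduction with special fiber naturally identified with $X_{N,\F_v}^{a,b} \in \sM_{\F_v, E_N}$, and then invoke the zeta polynomial computation from \S 3.4.

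First, I would check good reduction. Since $v \nmid N$, the homogeneous equation \eqref{equation-fermat} defines a smooth projective scheme $\mathcal{X}_N$ over $\sO_v$ whose generic fiber is $X_{N,K_N} \times_{K_N} k_v$ and whose special fiber is $X_{N,\F_v}$. The canonical isomorphism $\mu_N(K_N) \os{\simeq}{\lra} \mu_N(\F_v)$ (well-defined precisely because $v \nmid N$) shows that the chosen $\z_N \in \mu_N(K_N)$ reduces to the $\z_N \in \mu_N(\F_v)$ used to define $X_{N,\F_v}^{a,b}$. Consequently, each element $g_N^{r,s} \in G_N$ extends to an automorphism of $\mathcal{X}_N$, so the graph $\vG_{g_N^{r,s}}$ and hence the projector $p_N^{a,b}$ (which is a $E_N$-linear combination of such graphs) extends to a cycle on $\mathcal{X}_N \times_{\sO_v} \mathcal{X}_N$ whose special fiber is the analogous projector over $\F_v$. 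Thus $X_N^{a,b}$ has good reduction at $v$ and $(X_N^{a,b})_{\F_v} \simeq X_{N,\F_v}^{a,b}$.

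Second, applying formula \eqref{zeta-zeta} gives
$$
P_v(X_N^{a,b}, T) = P(X_{N,\F_v}^{a,b}, T),
$$
and this value is independent of the auxiliary choices of $\ell$ and $\l$. By Theorem \ref{fermat-polynomial} (i), the right-hand side equals $1 - j(\chi_{N,v}^a, \chi_{N,v}^b)\, T = 1 - j_N^{a,b}(v)\, T$, yielding the claim.

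The main (and really the only) subtlety to handle carefully is ensuring the compatibility of the chosen roots of unity: the isomorphism $\mu_N(K_N) \simeq \mu_N(\F_v)$ must be the one used to build $\chi_{N,v}$ from $\chi_N$ via the fixed identification $\mu_N(\F_v) \simeq \mu_N(E_N)$ sending the reduction of $\z_N$ to $\xi_N$. Once this is set up consistently, the Jacobi sum appearing in Theorem \ref{fermat-polynomial} (i) is by definition $j_N^{a,b}(v)$, and the proposition follows without further computation.
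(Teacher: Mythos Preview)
Your proof is correct and follows exactly the same approach as the paper: the paper simply asserts (in the sentence preceding the proposition) that $X_N^{a,b}$ has good reduction at $v\nmid N$ with $(X_N^{a,b})_{\F_v}=X_{N,\F_v}^{a,b}$, and then invokes \eqref{zeta-zeta} and Theorem~\ref{fermat-polynomial}. You have spelled out the details of the good-reduction claim and the compatibility of the roots of unity more carefully than the paper does, but the argument is the same.
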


Now we determine the bad $L$-factors. 
\begin{proposition}\label{bad-realization}
Let $(a,b) \in I_N^\prim$ and $v$ be a place of $K_N$ dividing $N$. 
Then we have 
$H_\ell(X_{N}^{a,b})^{I_v}=0$  for any $\ell\neq \mathrm{char}(\F_v)$. 
In particular, $P_v(X_N^{a,b},T)=1$. 
\end{proposition}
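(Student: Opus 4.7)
My plan is to reduce the claim to the ramification of a Galois character and then invoke Weil's Jacobi-sum Hecke character.

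First, by the corollary preceding this proposition, $H_\ell(X_N^{a,b})$ is a free $E_{N,\ell}$-module of rank one. The $G_N$-action on $X_{N,K_N}$ is $K_N$-linear, hence commutes with $\Gal(\ol K_N/K_N)$; combined with the fact that the $\theta_N^{a,b}$-isotypic component is one-dimensional, this forces the Galois action to be given by a single character $\chi_\ell\colon\Gal(\ol K_N/K_N)\ra E_{N,\ell}^\times$. The vanishing $H_\ell(X_N^{a,b})^{I_v}=0$ is then equivalent to $\chi_\ell|_{I_v}\neq 1$, i.e.\ to ramification of $\chi_\ell$ at $v$.

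Next, for each $w\nmid N\ell$ Theorem~\ref{fermat-polynomial} gives $\chi_\ell(\Fr_w)=j_N^{a,b}(w)\in E_N$, and the Davenport--Hasse relation \eqref{davenport-hasse} shows that these values are algebraic and independent of $\ell$. By Weil \cite{weil-numbers}, \cite{weil-jacobi}, the compatible system $(\chi_\ell)_\ell$ is therefore the $\ell$-adic realization of the algebraic Hecke character $j_N^{a,b}$ of $K_N$, whose conductor is a priori supported only on places dividing $N$.

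The remaining task---and the main obstacle---is to show that $j_N^{a,b}$ is actually ramified at every $v\mid N$ when $(a,b)$ is primitive, something invisible to the Frobenius eigenvalues at good primes. My approach would be local: write the Jacobi sum $j(\chi_N^a,\chi_N^b)$ as a ratio of Gauss sums so that the local component of $j_N^{a,b}$ on $\sO_v^\times$ becomes expressible via $N$-th power residue symbols, and check that this character is non-trivial on units precisely when $\gcd(N,a,b)=1$. One then concludes that $\chi_\ell|_{I_v}\neq 1$, forcing $H_\ell(X_N^{a,b})^{I_v}=0$ and, for the rank-one $E_{N,\ell}$-module, $P_v(X_N^{a,b},T)=1$.
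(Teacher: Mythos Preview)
Your approach is genuinely different from the paper's and is valid in outline, but the decisive step is left as a sketch.

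The paper argues purely geometrically. It uses the surjection
\[
H^1_\et(X_{N,\ol\F_v},\Q_\ell)\twoheadrightarrow H^1_\et(X_{N,\ol k},\Q_\ell)^{I_v}
\]
coming from weight considerations, together with the fact that the reduced special fiber of $X_{N,\F_v}$ is $X_{N',\F_v}$ for $N'=N/p^e$ (the morphism $X_{N'}\to X_N$ sending $(x_0:y_0:z_0)$ to itself is a nil-immersion in characteristic $p$). A comparison with Frobenius then shows that $\pi_{N/N'}^*$ identifies $H^1_\ell(X_N)^{I_v}$ with the sum of the pieces $H_\ell(X_N^{a'p^e,b'p^e})$ pulled back from level $N'$. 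A primitive $(a,b)$ is, by definition, not of this form, so its component contributes nothing to the inertia invariants. No Hecke characters enter.

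Your route reduces the proposition to the ramification of the Hecke character $j_N^{a,b}$ at every $v\mid N$ when $(a,b)$ is primitive. This equivalence is correct, and the statement is indeed provable by a direct local analysis via Gauss sums and Stickelberger-type congruences. But you do not carry this out, and it is not a formality: Weil \cite{weil-jacobi} only shows the conductor \emph{divides} $N^2$, and pinning down that every prime of $N$ actually occurs requires real work (of the sort later done by Hasse, Iwasawa, Coleman--McCallum, and others). The paper's geometric argument sidesteps this computation entirely, which is its main virtue here; your approach, once completed, would in return yield finer conductor information and tie in more transparently with the $L$-function side.
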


\begin{proof}
Let $X_{N,\F_v}$ denote the special fiber at $v$ of the model defined by the same equation as $X_N$. 
By a similar argument as in \cite{deligne-weil2} (3.6), we have a canonical surjection 
\begin{equation*}
\xymatrix{ H_\et^1(X_{N,\ol \F_v}, \Ql) \ar@{->>}[r] & H_\et^1(X_{N,\ol k}, \Ql)^{I_v}
}
\end{equation*}
compatible with the action of $\Fr_v$. 
Let $p=\mathrm{char}(\F_v)$, $N=p^eN'$ with $e>0, p \nmid N'$ 
and consider the commutative diagram 
\begin{equation*}
\xymatrix{
H_\et^1(X_{N',\ol \F_v}, \Ql) \ar[r]^\simeq \ar[d]_{\pi_{N/N',\ol \F_v}^*}
& H_\et^1(X_{N',\ol k}, \Ql) \ar@{^{(}->}[d]_{\pi_{N/N',\ol k}^*}
\phantom{^{I_v}. }\\
H_\et^1(X_{N,\ol \F_v}, \Ql) \ar@{->>}[r]  & H_\et^1(X_{N,\ol k}, \Ql)^{I_v}. 
}
\end{equation*}
Since $X_{N',K_N}$ has good reduction at $v$, the upper horizontal map is an isomorphism. The right vertical map is injective by the norm argument: $\pi_{N/N *} \circ \pi_{N/N'}^* = p^{2e}$. 
Consider a morphism
$$f \colon X_{N',\F_v} \lra X_{N,\F_v}; \quad (x_0:y_0:z_0) \longmapsto (x_0:y_0:z_0),$$
which identifies $X_{N',\F_v}$ with the reduced scheme associated to $X_{N,\F_v}$. 
Since $f$ is defined by a nilpotent ideal, it induces an isomorphism on cohomology. 
The composite $f\circ \pi_{N/N',\ol \F_v}$ coincides with the base change of $(F_{(p)})^e$ where 
$F_{(p)}$ is the Frobenius endomorphism of $X_{N,\F_p}$ \cite{sga4demi}, Rapport, \S 1. 
The action of $F_{(p)}$ on cohomology coincides with that of $\Fr_{\F_p}$, hence is an isomorphism.  Therefore, $\pi_{N/N',\ol \F_v}^*$ is surjective and all the maps in the diagram are isomorphic. By Proposition \ref{prop-level}, (i), we obtain
$$H_\ell^1(X_{N})^{I_v} = \bigoplus_{(a',b')\in I_{N'}} H_\ell(X_N^{a'p^e,b'p^e}),$$
which finishes the proof. 
\end{proof}

We have proved: 
\begin{theorem}\label{fermat-L} \ 
\begin{enumerate}
\item For any $(a,b) \in G_N$, Conjecture \ref{l-independence} is true for $X_N^{a,b}$ and $X_N^{[a,b]_k}$.  
\item
For $(a,b) \in I_{N}^\prim$ and an embedding $\s\colon E_N \hookrightarrow \C$, we have
\begin{align*}
L(\s,X_N^{a,b},s)=
L(\s_{|E_{N,k}},X_N^{[a,b]_k},s)=
 \prod_{v \nmid N} \Bigl(1-\s\bigl( j_{N}^{a,b}(v)\bigr) N(v)^{-s}\Bigr)^{-1}. 
\end{align*}
\end{enumerate}
\end{theorem}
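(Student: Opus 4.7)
The plan is to assemble the local computations already established. For part (i), I would reduce every case to primitive $(a,b)\in I_N^\prim$, then invoke Theorem \ref{fermat-polynomial} at good primes and Proposition \ref{bad-realization} at primes dividing $N$. Part (ii) then follows by taking the Euler product.

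First I would dispose of the cases $(a,b)\notin I_N$. By Proposition \ref{decomposition-X_K} (resp.\ Proposition \ref{decomposition-X}), either $X_N^{a,b}\simeq\mathbf{0}$, so its $L$-function is $1$, or $(a,b)=(0,0)$ and $X_N^{0,0}\simeq\mathbf{1}\oplus\mathbf{L}$, whose $L$-function is $\z_{K_N}(s)\z_{K_N}(s-1)$; the analogous statement holds for $X_N^{[a,b]_k}$ over $k$. Conjecture \ref{l-independence} is classical in these cases. For non-primitive $(a,b)\in I_N$, write $(a,b)=(a'd,b'd)$ with $(a',b')\in I_{N'}^\prim$ and $N=N'd$. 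Proposition \ref{prop-level} gives $X_N^{a,b}\simeq\vphi_{K_N/K_{N'}}^*X_{N'}^{a',b'}$ and $X_N^{[a,b]_k}\simeq X_{N'}^{[a',b']_k}$, and the compatibility \eqref{diagram-realization} of realization with scalar extension transfers the local factors, their integrality, and $\ell$-independence from the primitive case.

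It remains to treat $(a,b)\in I_N^\prim$. For a finite place $v$ of $K_N$ with $v\nmid N$, both $X_N$ and the cycle representing $p_N^{a,b}$ have good reduction, so \eqref{zeta-zeta} together with Theorem \ref{fermat-polynomial} (i) gives
$$P_v(X_N^{a,b},T)=1-j_N^{a,b}(v)T \in \sO_{E_N}[T],$$
independent of $\ell$. For $v\mid N$, Proposition \ref{bad-realization} yields $P_v(X_N^{a,b},T)=1$. This establishes Conjecture \ref{l-independence} for $X_N^{a,b}$, and part (ii) for this motive then follows by taking the Euler product and applying $\sigma$. For $X_N^{[a,b]_k}$, I would apply Proposition \ref{prop-field} (ii), which identifies it with $\vphi_{K_N/k*}X_N^{a,b}$, and then invoke Proposition \ref{induce-L} to transfer both the equality of $L$-functions and $\ell$-independence; the local polynomials automatically lie in $\sO_{E_{N,k}}[T]$ since $X_N^{[a,b]_k}\in\sM_{k,E_{N,k}}$ (concretely, this reflects $j_N^{ha,hb}(v)=j_N^{a,b}(v^{h^{-1}})$ for $h\in H_{N,k}$).

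The main conceptual content has in fact already been done: the nontrivial inputs are Theorem \ref{fermat-polynomial}, obtained via the Grothendieck fixed-point formula and a direct Jacobi-sum recognition, and Proposition \ref{bad-realization}, where a Frobenius/nilpotent-thickening argument forces the bad Euler factors to be trivial. The present statement is essentially a bookkeeping assembly of these ingredients through the functoriality of $\ell$-adic realization (Propositions \ref{prop-level}, \ref{prop-field}, \ref{induce-L}).
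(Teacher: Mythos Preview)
Your proposal is correct and follows essentially the same route as the paper: the reduction of the non-$I_N$ and non-primitive cases via Propositions \ref{decomposition-X_K}, \ref{decomposition-X}, \ref{prop-level}, then the computation of the good Euler factors from \eqref{zeta-zeta} and Theorem \ref{fermat-polynomial}, the bad factors from Proposition \ref{bad-realization}, and finally the passage to $X_N^{[a,b]_k}$ via Proposition \ref{prop-field} (ii) and Proposition \ref{induce-L}, are exactly the steps the paper assembles in \S 3.5 before stating the theorem. As you observe, the statement is a bookkeeping summary of results already proved.
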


\begin{remark}\label{sigma-independence}
It follows that $L(X_N^{a,b},s)$ depens only on the class $[a,b]_k$ and 
is an $(E_{N,k})_\C$-valued function.  
In particular, if $H_{N,k} = H_N$ (e.g. $k=\Q$), then it is $\C$-valued, 
i.e.  $L(\s,X_N^{a,b},s)$ does not depend on $\s$. 
\end{remark}

\subsection{Functional equation}

Let $k$ be a number field and for an infinite place $v$ of $k$, let $k_v$ be its completion. 
Then, for $X \in \sV_k$,  we have 
$$X_\R := X \times_\Q \R = \bigsqcup_{v| \infty} X_v, \quad X_v :=X \times_k k_v$$  
regarded as $\R$-schemes.  
More generally, for a motive $M \in \sM_{k,E}$, we have 
$$M_\R:=\vphi_{\R/\Q}^*\vphi_{k/\Q *}M = \bigoplus_{v | \infty} M_v, \quad M_v:=\vphi_{k_v/\R *}\vphi_{k_v/k}^*M$$
in $\sM_{\R,E}$.  

For a motive $M=(X,p,m) \in \sM_{\R,E}$ over $\R$, the $i$-th singular cohomology is defined by
$$H^i(M(\C),\Q) := p\left(E \ot_\Q H^i(X(\C),\Q(m))\right)$$ 
(we use such a notation although  $M(\C)$ itself is not defined),   
where 
$$\Q(m):=(2\pi i)^m \Q.$$
It is a Hodge structure of pure weight $w=i-2m$, that is, 
there is a bigrading  
$$H^i(M(\C),\Q) \ot_\Q \C \simeq \bigoplus_{p+q=w}H^{p,q}(M)$$
as an $E_\C$-module such that the complex conjugation $c_\infty$ (on the coefficients) 
exchanges $H^{p,q}$ and $H^{q,p}$.  
Since $X$ is defined over $\R$, the complex conjugation $F_\infty$ called the {\em infinite Frobenius} acts on $X(\C)$ and hence on the cohomology, which also exchanges $H^{p,q}$ and $H^{q,p}$. 
The Hodge numbers are defined by
$$h^{p,q}(M)=\rank_{E_\C} H^{p,q}(M), \quad h_\pm^{p,p}(M) = \rank_{E_\C} H_\pm^{p,p}(M),$$
where  
$H_\pm^{p,p}(M) := H^{p,p}(M)^{F_\infty = \pm (-1)^p}$. 
Using the standard notations 
\begin{equation*}
\vG_\R(s) :=\pi^{-s/2}\vG(s/2),\quad \vG_\C(s) := \vG_\R(s)\vG_\R(s+1)=2(2\pi)^{-s}\vG(s),
\end{equation*}
we put   
\begin{equation*}
\vG(M,s) = \prod_{p<q} \vG_\C(s-p)^{h^{p,q}(M)} 
\prod_{p}  \vG_\R(s-p)^{h_+^{p,p}(M)} \vG_\R(s+1-p)^{h_-^{p,p}(M)} . 
\end{equation*}

Now, for a motive $M \in \sM_{k,E}$ over a number field, assume Conjecture \ref{l-independence}
and define the  {\em completed $L$-function} by
$$
\vL(M,s)=L(M,s)\vG(M_\R,s). 
$$
By the Poincar\'e duality
$H_\ell^i(M)^\vee \simeq H_\ell^{2d-i}(M^\vee)$, 
Conjecture \ref{l-independence} also holds for $M^\vee$. 

\begin{conjecture}[Hasse-Weil]\label{hasse-weil} 
$L(M,s)$ is continued to a meromorphic function on the whole complex plane  
and satisfies a functional equation 
$$\vL(M,s) = \varepsilon(M,s) \vL(M^\vee,1-s)$$
where $\varepsilon(M,s)$ is the product of a constant and an exponential function 
(see \cite{deligne-valeurs}, \cite{serre-facteurs}). 
\end{conjecture}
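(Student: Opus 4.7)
The strategy is to reduce the Hasse--Weil conjecture for our Fermat motives to Hecke's theorem on L-functions of Gr\"ossencharakters, via Weil's identification of Jacobi sums with a Hecke character. The task then splits into (a) a series of formal reductions using the structural propositions already established, (b) matching the Euler product with a Hecke L-function, and (c) verifying that the archimedean $\Gamma$-factors coming from the Hodge structure agree with those attached to the character.

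\textbf{Reductions.} First, $X_N^{0,0} \simeq \mathbf{1} \oplus \mathbf{L}$ by Proposition \ref{decomposition-X_K}, so its $L$-function is a product of shifted Dedekind zeta functions, for which the conjecture is classical (Hecke). If exactly one of $a,b,a+b$ vanishes, the motive is zero. For $(a,b) \in I_N$ not primitive, write $(a,b)=(a'd,b'd)$ with $(a',b')\in I_{N'}^{\mathrm{prim}}$; Proposition \ref{prop-level} identifies $X_N^{a,b}$ with $\vphi_{K_N/K_{N'}}^* X_{N'}^{a',b'}$, so by Proposition \ref{induce-L} the $L$-function is that of a primitive motive over $K_{N'}$. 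Similarly $X_N^{[a,b]_k}$ reduces to $X_{N'}^{[a',b']_k}$, and by Proposition \ref{prop-field}(ii) combined with Proposition \ref{induce-L},
$$L(X_{N'}^{[a',b']_k},s) = L(\vphi_{K_{N'}/k\,*} X_{N'}^{a',b'}, s) = L(X_{N'}^{a',b'}, s).$$
Thus the whole problem reduces to $M = X_N^{a,b}$ with $(a,b) \in I_N^{\mathrm{prim}}$ over $k = K_N$.

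\textbf{Identification as a Hecke $L$-function.} By Theorem \ref{fermat-L}(ii) and Proposition \ref{bad-realization}, for each embedding $\s \colon E_N \hookrightarrow \C$,
$$L(\s,X_N^{a,b},s) = \prod_{v \nmid N} \bigl(1 - \s(j_N^{a,b}(v)) N(v)^{-s}\bigr)^{-1},$$
with trivial Euler factors at $v \mid N$. Weil's theorem \cite{weil-numbers}, \cite{weil-jacobi} asserts precisely that the assignment $v \mapsto \s(j_N^{a,b}(v))$ on unramified primes extends to a Hecke Gr\"ossencharakter $\psi_N^{a,b}$ of $K_N$ with conductor dividing a power of $N$ and with infinity type determined by the pair $(a/N,b/N)$; the corresponding Hecke $L$-function therefore coincides with $L(\s,X_N^{a,b},s)$. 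For the dual, Remark \ref{fermat-dual} gives $(X_N^{a,b})^\vee = X_N^{-a,-b}(1)$, whose $L$-function is $L(X_N^{-a,-b}, s+1) = L(\overline{\psi_N^{a,b}}, s+1)$.

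\textbf{Functional equation.} Hecke's theorem gives meromorphic continuation to $\mathbb{C}$ of $L(\psi_N^{a,b},s)$ and an equation $\Lambda(\psi_N^{a,b},s) = W \cdot \Lambda(\overline{\psi_N^{a,b}},1-s)$ for the completed $L$-function. It remains to check that the $\Gamma$-factors $\vG(M_\R,s)$ of the motive agree with those of $\psi_N^{a,b}$. Since $X_N^{a,b}$ has rank one realization (Corollary after Theorem \ref{fermat-polynomial}), its Hodge structure is either of type $(1,0)+(0,1)$ after descent to $\R$, contributing a $\vG_\C(s)$-factor for each complex place of $K_N$, exactly as for the Hecke character. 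The compatibility of bad Euler factors is immediate since Proposition \ref{bad-realization} forces both sides to be trivial at $v \mid N$ (the ramification being absorbed into the conductor of $\psi_N^{a,b}$).

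\textbf{Main obstacle.} The analytic input (continuation and functional equation of Hecke $L$-series) is classical; the genuine work is Weil's identification of $v \mapsto j_N^{a,b}(v)$ with an algebraic Hecke character of prescribed infinity type, and the bookkeeping that matches the motivic gamma factors, conductor, and root number with the analytic ones. In particular, computing the Hodge decomposition of $X_N^{a,b}$ and the $F_\infty$-action precisely enough to distinguish $\vG_\R(s-p)$ from $\vG_\R(s+1-p)$ contributions is the most delicate step, and is effectively the content of \S 3.7 where the comparison with Weil's Artin $L$-functions is made.
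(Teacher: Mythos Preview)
The statement you are addressing is a \emph{conjecture} for general motives $M$, and the paper offers no proof of it in that generality. What the paper does, in the paragraph immediately following the conjecture and in Corollary~\ref{functional-equation}, is verify it for the Fermat motives $X_N^{a,b}$; your proposal is a proof of that special case, not of the conjecture itself. With that understood, your argument is correct and follows essentially the same route as the paper: reduce to primitive $(a,b)$ over $K_N$, invoke Weil's theorem that $v\mapsto j_N^{a,b}(v)$ is a Hecke Gr\"ossencharakter of conductor dividing $N^2$, and then appeal to Hecke's classical analytic continuation and functional equation.

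Two minor remarks. First, the Hecke functional equation for a weight-$1$ algebraic character relates $s$ and $2-s$, not $s$ and $1-s$; this matches the motivic statement because $(X_N^{a,b})^\vee=X_N^{-a,-b}(1)$ gives $L(M^\vee,1-s)=L(X_N^{-a,-b},2-s)$, so your shift is off by one but the bookkeeping is routine. Second, the Hodge numbers and the $F_\infty$-action are not handled in \S 3.7 (which is about the Artin $L$-function comparison) but in Remark~\ref{remark-h-number}, where one sees directly that each complex place of $K_N$ contributes $h^{1,0}=h^{0,1}=1$ and hence a $\vG_\C(s)$ factor, as you claim.
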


\begin{remark}
If $M=h^i(X)$, then by the hard Lefschetz theorem
$H_\ell^{2d-i}(X)(d) \simeq H_\ell^i(X)(i)$, 
the functional equation is also written as
$$\vL(h^i(X),s) = \varepsilon(h^i(X),s) \vL(h^i(X), i+1-s).$$ 
\end{remark}

Now, consider the Fermat motive $M=X_N^{a,b} \in \sM_{K_N,E_N}$, $(a,b) \in I_N$. 
Recall that $M^\vee = X_N^{-a,-b}(1)$ (Remark \ref{fermat-dual}). 
Weil \cite{weil-jacobi} proved that $j_N^{a,b}$ is a Hecke character of conductor dividing $N^2$, 
hence by Theorem \ref{fermat-L}, $L(X_N^{a,b},s)$ satisfies Conjecture \ref{hasse-weil}.  
As we shall see later (Remark \ref{remark-h-number}), for each infinite place $v$ of $K_N$, 
we have
\begin{equation}\label{h-number}
h^{0,1}(M_v)=h^{1,0}(M_v)=1
\end{equation} 
and the others are $0$. 
Therefore, we have
\begin{equation*}
\vL(X_N^{a,b},s)= L(X_N^{a,b},s)\vG_\C(s)^{r_2}
\end{equation*}
where $r_2 = [K_N:\Q]/2$ is the number of the complex places of $K_N$, so we obtain: 
\begin{corollary}\label{functional-equation}
Let $(a,b) \in I_N$. 
\begin{enumerate}
\item $L(X_N^{a,b},s)$ is analytically continued to an entire function on the whole complex plane. 
\item $\vL(X_N^{a,b},s) = \a\b^{s} \cdot \vL(X_N^{-a,-b}, 2-s)$ with some $\a, \b \in (E_N)_\C^*$. 
\item $L(X_N^{a,b},s)$ has zero of order $r_2$ at each non-positive integer. 
\end{enumerate}
\end{corollary}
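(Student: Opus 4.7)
The plan is to feed the Hodge data \eqref{h-number} into the general functional equation framework of \S 3.2 and then evaluate the resulting identity at non-positive integers. By the Weil reference already cited, $j_N^{a,b}$ is a non-trivial Hecke character of $K_N$ (it is non-trivial because $(a,b)\in I_N$, which in particular forces $N\geq 3$), so Hecke's classical theory furnishes an entire continuation of $L(X_N^{a,b},s)$ and verifies Conjecture \ref{hasse-weil} for $M=X_N^{a,b}$; this settles (i).

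For (ii), since $\mu_N\not\subset\R$ when $N\geq 3$, every infinite place of $K_N$ is complex and $[K_N:\Q]=2r_2$. Plugging $h^{0,1}(M_v)=h^{1,0}(M_v)=1$ from \eqref{h-number} into the definition of $\vG(M_\R,s)$ contributes exactly one factor of $\vG_\C(s)$ per complex place, giving $\vG(M_\R,s)=\vG_\C(s)^{r_2}$. By Remark \ref{fermat-dual} we have $M^\vee=X_N^{-a,-b}(1)$, so the same Hodge calculation applied to $M^\vee$ (whose nonzero Hodge numbers sit at $(-1,0)$ and $(0,-1)$), combined with the twist relation $L(M(r),s)=L(M,s+r)$, yields $\vL(M^\vee,1-s)=\vL(X_N^{-a,-b},2-s)$, and the functional equation of Conjecture \ref{hasse-weil} rewrites as $\vL(X_N^{a,b},s)=\a\b^s\cdot\vL(X_N^{-a,-b},2-s)$, with $\a\b^s$ the constant-times-exponential $\varepsilon$-factor.

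For (iii), I would evaluate this functional equation at $s=-n$ with $n\geq 0$. Since $X_N^{-a,-b}$ has pure weight $1$, the Euler product $\prod_v(1-\s(j_N^{-a,-b}(v))N(v)^{-s})^{-1}$ (with trivial bad factors by Proposition \ref{bad-realization}) converges absolutely for $\Re(s)>3/2$ to a nonzero value for each embedding $\s\colon E_N\hookrightarrow\C$; at $s=2+n\geq 2$ both this value and $\vG_\C(2+n)^{r_2}$ are finite and nonzero, so $\vL(X_N^{-a,-b},2+n)\in(E_N)_\C^*$, and by the functional equation $\vL(X_N^{a,b},-n)\in(E_N)_\C^*$ as well. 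Since $\vG_\C(s)=2(2\pi)^{-s}\vG(s)$ has a simple pole at each non-positive integer, $\vG_\C(s)^{r_2}$ has a pole of order exactly $r_2$ at $s=-n$; the only way for $L(X_N^{a,b},s)\,\vG_\C(s)^{r_2}$ to take a finite nonzero value there is for $L(X_N^{a,b},s)$ to vanish to order exactly $r_2$. The only non-trivial input still to be supplied is the Hodge identity \eqref{h-number}, which the author defers to \S 4; given it together with Weil's theorem on Jacobi-sum Hecke characters already cited, what remains is routine bookkeeping with Gamma factors and absolute convergence, and I do not expect any serious obstacle.
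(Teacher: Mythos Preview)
Your argument is correct and is exactly the approach the paper takes: Weil's theorem on Jacobi-sum Hecke characters supplies the entire continuation and functional equation, the Hodge data \eqref{h-number} (deferred to Remark \ref{remark-h-number}) yields $\vG(M_\R,s)=\vG_\C(s)^{r_2}$, and reading off the order of vanishing at non-positive integers from the pole of $\vG_\C(s)^{r_2}$ against the nonvanishing of the right-hand side in the convergence region is precisely what the paper's terse ``so we obtain'' is encoding. You have simply made explicit what the paper leaves to the reader.
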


\begin{remark}\label{epsilon}
If $N=p$ is a prime number and $K_p=\Q(\mu_p)$, then 
the $\varepsilon$-factor is classically known by Hasse \cite{hasse} (cf. \cite{gross-rohrlich}):   
$$\varepsilon(X_p^{a,b},s)=\pm (p^{p-2+f})^{s-1}$$
with $f=1$ or $2$, easily calculated from $(a,b)$. 
The sign (root number) is determined by Gross-Rohrlich \cite{gross-rohrlich}. 
\end{remark}

\subsection{Artin $L$-functions}

In \cite{weil-jacobi}, Weil interpreted the Jacobi-sum Hecke $L$-function as an Artin $L$-function. 
Our motivic $L$-function is regarded as a rephrasing of it.  
Though not necessary in the sequel, we explain the relation between them.  
Although the representation $D_{a,b}$ of Weil (loc. cit.) is not written explicitly, our $\rho_N^{[a,b]_k}$ below should correspond to it.  

Let $\sX$ be a scheme of finite type over $\Spec \Z$ and $|\sX|$ be the set of its closed points. 
For $x \in |\sX|$, let $\kappa(x)$ be its (finite) residue field and put $N(x) = \sharp \kappa(x)$. 
The {\em Hasse zeta function} of $\sX$ is defined by 
\begin{equation*}
\z(\sX,s) = \prod_{x \in |\sX|} (1- N(x)^{-s})^{-1}.
\end{equation*}
If $\sX$ is of Krull dimension $d$, $\z(\sX,s)$ converges absolutely for $\Re(s)>d$ 
and defines a holomorphic function in the region.   

Let $\sX \ra \sY$ be a finite flat covering of schemes of finite type over $\Z$ which is 
generically \'etale and Galois with Galois group $G$. 
Let $\rho$ be a complex representation of $G$ and $\chi$ be its character.   
The {\em Artin $L$-function} $L(\sX/\sY,\rho,s)$ is defined by
\begin{equation*}
\log L(\sX/\sY,\rho,s) = \sum_{y \in |Y|} \sum_{n=1}^\infty \frac{\chi(y^n) N(y)^{-s}}{n}
\end{equation*}
(see \cite{serre-zeta}).
If $\rho$ is the unit representation (resp. the regular representation), then it reduces to 
$\z(\sY,s)$ (resp. $\z(\sX,s)$). 

Now, let $\sX_{N,k}$ be the Fermat scheme of degree $N$ over $\sO_k$ 
defined by the same equation \eqref{equation-fermat},  
and consider the diagram similar to \eqref{diagram-1}. 
By the basic functorialities \cite{serre-zeta}, we have    
\begin{align*}
\z(\sX_{N,k},s) 
& = L(\sX_{N,K_N}/\sX_{N,k}, 1_{H_{N,k}},s)\\
& =L(\sX_{N,K_N}/\sX_{1,k}, \Ind_{H_{N,k}}^{\vG_{N,k}} 1_{H_{N,k}}, s).  
\end{align*}
It is not difficult to determine the irreducible decomposition  of $\Ind_{H_{N,k}}^{\vG_{N,k}} 1_{H_{N,k}}$. 
For $(a,b) \in G_N$,  there is a unique $(a',b')\in G_{N'}^\prim$ with $N=N'd$ such that $(a,b)=(a'd,b'd)$. 
Put 
\begin{align*}
\rho_N^{[a,b]_k} = \Res_{\vG_{N',k}}^{\vG_{N,k}} \Ind_{G_{N'}}^{\vG_{N',k}} \s\theta_{N'}^{a',b'}, 
\end{align*}
where 
$$\s\theta_N^{a,b} \colon G_N \os{\theta_N^{a,b}}{\lra} E_N^*  \os{\s}{\hookrightarrow} \C^*$$ 
is the composition. 
Then one shows that $\rho_N^{[a,b]_k}$ is irreducible and  
\begin{equation*}
\Ind_{H_{N,k}}^{\vG_{N,k}} 1_{H_{N,k}} = \bigoplus_{[a,b]_k \in H_{N,k}\backslash G_N} \rho_N^{[a,b]_k}. 
\end{equation*}
Therefore, we obtain  
\begin{equation*}
\z(\sX_{N,k},s)=\prod_{[a,b]_k \in H_{N,k}\backslash G_N} L(\sX_{N,K_N}/\sX_{1,k}, \rho_N^{[a,b]_k},s). 
\end{equation*}
Further, we have
\begin{align*}
L(\sX_{N,K_N}/\sX_{1,k},\rho_N^{[a,b]_k},s) 
&= L(\sX_{N',K_{N'}}/\sX_{1,k}, \Ind_{G_{N'}}^{\vG_{N',k}} \s\theta_{N'}^{a',b'},s)\\
&=L(\sX_{N',K_{N'}}/\sX_{1,K_{N'}}, \s\theta_{N'}^{a',b'},s). 
\end{align*}
\begin{proposition}
For $(a,b) \in I_{N}^\prim$,  we have
\begin{equation*}
L(\sX_{N,K_{N}}/\sX_{1,K_{N}}, \s\theta_{N}^{a,b},s) = L(\s,X_{N}^{a,b},s)^{-1}. 
\end{equation*}
\end{proposition}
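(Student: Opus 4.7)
The plan is to match Euler factors at each finite prime $v$ of $K_N$, reducing the identity to the point-counting computation already performed in Theorem \ref{fermat-polynomial}. First I would take logarithms and group closed points $y$ of $\sX_{1,K_N}$ by the underlying prime $v$ of $K_N$; for $v \nmid N$ the closed points of the fiber $\sX_{1,\F_v} \simeq \P^1_{\F_v}$ at which the $G_N$-cover $\sX_{N,K_N}/\sX_{1,K_N}$ is unramified correspond to Frobenius orbits of $u \in \overline{\F_v} \setminus \{0, 1\}$ (with $u = x_0/z_0$). At the three ramified closed points $u = 0, 1, \infty$, the inertia subgroups of $G_N$ are generated by $g_N^{1,0}$, $g_N^{0,1}$, $g_N^{1,1}$ respectively; since $(a,b) \in I_N^\prim$ forces $a, b, a+b \not\equiv 0 \pmod{N}$, the character $\s\theta_N^{a,b}$ is nontrivial on each, so these closed points contribute trivially to the Artin $L$-function.

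Next I would identify $\Fr_y$ for an unramified closed point $y$ of degree $d$ over $\F_v$ corresponding to $u_0 \in \F_{v^d} \setminus \{0,1\}$: lifting $y$ to a geometric point $P = (x_0, y_0, 1) \in X_N(\overline{\F_v})$ with $x_0^N = u_0$, $y_0^N = 1-u_0$, the Frobenius acts via $\Phi^d(P) = g_N^{r,s}\cdot P$ where $u_0^{(N(y)-1)/N} = \z_N^r$ and $(1-u_0)^{(N(y)-1)/N} = \z_N^s$, i.e., by the $N$-th power residue symbols of $u_0$ and $1-u_0$. Combined with the definition of $\chi_{N,v}$ in \S3.3 this yields $\s\theta_N^{a,b}(\Fr_y) = \s\bigl(\chi_{N,v,d}^a(u_0)\,\chi_{N,v,d}^b(1-u_0)\bigr)$ with $\chi_{N,v,d} := \chi_{N,v}\circ N_{\F_{v^d}/\F_v}$. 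Reindexing from Frobenius orbits to $\F_{v^m}$-valued points (using $\chi_{N,v,m}(u_0) = \chi_{N,v,d}(u_0)^{m/d}$ for $u_0 \in \F_{v^d} \subset \F_{v^m}$), the contribution of primes above $v$ to $\log L(\sX_{N,K_N}/\sX_{1,K_N}, \s\theta_N^{a,b}, s)$ becomes
$$\sum_{m \geq 1}\frac{1}{m N(v)^{ms}} \sum_{u \in \F_{v^m}\setminus\{0,1\}} \s\bigl(\chi_{N,v,m}^a(u)\,\chi_{N,v,m}^b(1-u)\bigr).$$
By the definition of the Jacobi sum (with the minus sign of \S3.3) and the Davenport--Hasse relation \eqref{davenport-hasse}, the inner sum equals $-\s j_N^{a,b}(v)^m$, so summing gives $\log(1 - \s j_N^{a,b}(v) N(v)^{-s}) = -\log L_v(\s,X_N^{a,b},s)$ by Theorem \ref{fermat-L}(ii). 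For $v \mid N$, Proposition \ref{bad-realization} gives $L_v(\s,X_N^{a,b},s) = 1$; the Artin local factor at such $v$ is also trivial by the same reduction to the prime-to-$p$ part of $N$ used in that proposition, where $p = \mathrm{char}\,\F_v$.

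The principal obstacle is the precise identification of $\Fr_y$ as an element of $G_N$ in terms of the fixed primitive root $\z_N$, which requires harmonizing the choices of $\z_N \in \mu_N(K_N)$, the isomorphism \eqref{mumu}, and the $G_N$-action fixed in \S2.5. Once these are aligned, the remainder is essentially a re-encoding of the trace calculation of Theorem \ref{fermat-polynomial} in the logarithmic form of the Artin $L$-function, and the inverse appearing on the right-hand side of the proposition reflects precisely the minus sign in the definition of the Jacobi sum in \S3.3.
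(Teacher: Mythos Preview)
Your argument is correct and lands on the same fiberwise reduction to Jacobi sums as the paper, but the organization differs. The paper's proof invokes Serre's averaged fixed-point expression for the Artin $L$-function,
\[
\nu_n = \frac{1}{N^2}\sum_{g\in G_N}\s\theta_N^{a,b}(g^{-1})\,\vL(\Fr_{\F_v}^n g),
\]
and then simply observes that this is precisely the quantity already computed in the proof of Theorem \ref{fermat-polynomial}; no new Frobenius identification is needed. You instead work with the per-closed-point form of the Artin $L$-function on the base $\sX_{1,\F_v}\simeq\P^1_{\F_v}$, explicitly determine the inertia at $u=0,1,\infty$ and the Frobenius at unramified points via power-residue symbols, and then resum. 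This is more hands-on and self-contained, but it duplicates work already encapsulated in Theorem \ref{fermat-polynomial}; the paper's route is shorter precisely because it recycles that computation.

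For $v\mid N$ your sketch (``by the same reduction to the prime-to-$p$ part'') is the right idea but would benefit from being spelled out in your framework: since the $G_N$-action on $\sX_{N,\F_v}$ factors through $G_{N'}$ with $N'=N/p$, every closed point of the base carries $G_{N/N'}$ in its inertia, and primitivity of $(a,b)$ makes $\s\theta_N^{a,b}$ nontrivial there, so each local factor is $1$. The paper obtains the same vanishing directly from the $\nu_n$ formula by noting that $\vL(\Fr^n g)$ depends only on the image of $g$ in $G_{N'}$, whence $\sum_{g\in G_{N/N'}}\theta_N^{a,b}(g^{-1})=0$ forces $\nu_n=0$.
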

\begin{proof}
We prove it fiberwise; 
let $v$ be a finite place of $K_N$.
Then, we have
$$\log L(\sX_{N,\F_v}/\sX_{1,\F_v},\s\theta_N^{a,b},s) = \sum_{n=1}^\infty \frac{\nu_n T^n}{n}, \quad
T=N(v)^{-s}$$
where
$$\nu_n = \frac{1}{N^2} \sum_{g \in G_N} \s\theta_N^{a,b}(g^{-1}) \vL(\Fr_{\F_v}^ng)$$
(see \cite{serre-zeta}). 
If $v \nmid N$, it equals $\log P_v(X_N^{a,b},T)$ by the proof of Theorem \ref{fermat-polynomial}. 
If $v |N$, let $p=\mathrm{char}(\F_v)$ and $N'=N/p$. 
Then, the action of $g \in G_N$ on $\sX_{N,\F_v}$ depends only on the image of $g$ in $G_{N'}$. 
Since $(a,b)$ is primitive, we have 
$\sum_{g \in G_{N/N'}} \theta_N^{a.b}(g^{-1})=0$, 
hence $\nu_n=0$, and the proof finishes by Proposition \ref{bad-realization}. 
\end{proof}

If $N'=1$, i.e. $(a,b)=(0,0)$, then we are reduced to 
$$\z(\sX_{1,k},s) = \z(\P^1_{\sO_k},s) =  \z_{k}(s)\z_{k}(s-1).$$
If only one of $a$, $b$, $a+b$ is $0$, one proves easily that $L(\sX_{N,K_{N}}/\sX_{1,K_{N}}, \s\theta_{N}^{a,b},s) =1$. 
Summarizing, we obtain:
\begin{proposition}
$$\z(\sX_{N,k},s)= \z_{k}(s)\z_{k}(s-1) \prod_{N'|N} \prod_{[a',b']_k \in H_{N',k}\backslash I_{N'}^\prim} 
L(\s,X_{N'}^{a',b'},s)^{-1}.$$
\end{proposition}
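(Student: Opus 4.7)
The plan is to assemble the three ingredients already established in this subsection: the factorization of the Hasse zeta function as a product of Artin $L$-functions over characters of $\vG_{N,k}$, the reduction of each such $L$-function to a character $\s\theta_{N'}^{a',b'}$ on the primitive level $N'\mid N$, and the preceding proposition relating the latter to $L(\s,X_{N'}^{a',b'},s)^{-1}$. The only genuinely new work is a careful bookkeeping of the three kinds of orbits $[a',b']_k$ in $H_{N',k}\backslash G_{N'}^{\prim}$.

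First I would start from
\[
\z(\sX_{N,k},s)=\prod_{[a,b]_k \in H_{N,k}\backslash G_N}L(\sX_{N,K_N}/\sX_{1,k},\rho_N^{[a,b]_k},s)
\]
and rewrite each factor, via the transitivity chain displayed above the preceding proposition, as $L(\sX_{N',K_{N'}}/\sX_{1,K_{N'}},\s\theta_{N'}^{a',b'},s)$ where $(a',b')\in G_{N'}^{\prim}$ is the primitive representative determined by $[a,b]_k$. Using the bijection $H_{N,k}\backslash G_N\simeq\bigsqcup_{N'\mid N}H_{N',k}\backslash G_{N'}^{\prim}$ from \S 2.6, the product is reindexed over pairs $(N',[a',b']_k)$ with $N'\mid N$ and $[a',b']_k\in H_{N',k}\backslash G_{N'}^{\prim}$.

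Next I would split this reindexed product into three parts according to the nature of $(a',b')$. When $N'=1$, the only class is $(0,0)$; the associated character is trivial and so the Artin $L$-function collapses to $\z(\sX_{1,k},s)=\z(\P^1_{\sO_k},s)=\z_k(s)\z_k(s-1)$, which supplies the prefactor on the right-hand side. When $(a',b')\in I_{N'}^{\prim}$, the preceding proposition gives directly $L(\sX_{N',K_{N'}}/\sX_{1,K_{N'}},\s\theta_{N'}^{a',b'},s)=L(\s,X_{N'}^{a',b'},s)^{-1}$, producing exactly the indexed product claimed. The remaining "degenerate" primitive classes, namely those where exactly one of $a'$, $b'$, $a'+b'$ vanishes, must be shown to contribute $1$.

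The main obstacle is the degenerate case. To treat it I would mimic the trace-formula computation of Theorem \ref{fermat-polynomial}: at each place $v\nmid N'$, the local Artin coefficient is
\[
\nu_n=\frac{1}{N'^2}\sum_{g\in G_{N'}}\s\theta_{N'}^{a',b'}(g^{-1})\,\vL(\Fr_{\F_v}^n g),
\]
and the character-sum manipulation carried out for $(a,b)\in I_N$ shows that the Jacobi-sum contribution is now a product involving a vanishing Gauss-type sum (because the character is trivial in one coordinate), while the boundary terms $\vL_1$ and the $\delta(r=0),\delta(s=0)$ terms cancel by the same orthogonality argument that proved Proposition \ref{decomposition-X_K}(ii); hence $\nu_n=0$. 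At places $v\mid N'$ the bad-reduction argument of Proposition \ref{bad-realization} applies verbatim since it only used primitivity of $(a',b')$, and again the local factor is $1$. Multiplying everything together produces the asserted identity.
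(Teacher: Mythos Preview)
Your approach is exactly the paper's: assemble the Artin factorization, the reduction to primitive level, and the preceding proposition, then split the primitive classes into the three types. The paper gives no more detail than you do; for the degenerate classes it simply asserts ``one proves easily that $L(\sX_{N,K_N}/\sX_{1,K_N},\s\theta_N^{a,b},s)=1$''.

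One small correction to your sketch of that easy verification. In the degenerate case the two pieces do \emph{not} vanish separately; they cancel against each other. For instance when $a\neq 0$, $b=0$, the main character sum $\sum_{u+v=1,\,u,v\neq 0}\chi^a_{N,n}(u)\chi^0_{N,n}(v)=\sum_{u\neq 0,1}\chi^a_{N,n}(u)=-1$, while among the three boundary sums only $\frac{1}{N}\sum_s\xi_N^{bs}=1$ survives, giving $\nu_n=-1+1=0$. The case $a+b=0$, $a,b\neq 0$ is similar: the main sum equals $-\chi^a_{N,n}(-1)$ and the surviving boundary term equals $\chi^a_{N,n}(-1)$. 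So ``vanishing Gauss-type sum'' and ``cancel by the same orthogonality argument'' are not the right descriptions. Also, for $v\mid N'$ the argument you want is the one in the proof of the immediately preceding proposition (primitivity forces $\sum_{g\in G_{N'/N''}}\theta_{N'}^{a',b'}(g^{-1})=0$), not Proposition~\ref{bad-realization}, which concerns the motivic side. With these adjustments your outline is complete and matches the paper.
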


\section{Regulators of Fermat motives}

\subsection{Motivic cohomology}

We briefly recall the definition of motivic cohomology of motives and its integral part.  
For more details, see \cite{nekovar}, \cite{schneider}, \cite{scholl-2}.

For a noetherian scheme $X$, let $K_i(X)$ (resp. $K'_i(X)$) be the algebraic $K$-group 
of vector bundles (resp. coherent sheaves) \cite{quillen}. 
If $X$ is regular, the natural map $K_i(X)  \ra K_i'(X)$ is an isomorphism. 
For a quasi-projective variety $X$ over a field, we define its {\em motivic cohomology group} by 
$$H^n_\sM(X,\Q(r)) = K_{2r-n}(X)_\Q^{(r)}, $$
the Adams eigenspace of weight $r$ \cite{soule}.  

Recall the Grothendieck Riemann-Roch theorem
$\Q \ot_\Z \CH^r(X) = K_0(X)_\Q^{(r)}$. 
For $X$, $Y \in \sV_k$ and $f \in \Corr^d(X,Y) = K_0(X\times Y)_\Q^{(\dim X + d)}$ 
(for $X$  irreducible), 
the composition
\begin{multline*}
K_i(X)_\Q \os{\pr_X^*}{\lra} K_i(X\times Y)_\Q \os{\cup f}{\lra} K_i(X \times Y)_\Q =K'_i(X \times Y)_\Q \\
\os{\pr_{Y *}}{\lra} K'_i(Y)_\Q=K_i(Y)_\Q
\end{multline*}
induces a homomorphism
$H_\sM^n(X,\Q(r)) \ra H_\sM^{n+2d}(Y,\Q(r+d))$
by the Riemann-Roch theorem \cite{soule}, \cite{tamme}. 
For a motive $M=(X,p,m) \in \sM_{k,E}$, its motivic cohomology group is defined to be the $E$-module
\begin{equation*}
H^n_\sM(M,\Q(r)) = p(E \ot_\Q H_\sM^n(X,\Q(r+m))). 
\end{equation*}
For a fixed $r$, 
\begin{equation}\label{functor-motivic}
H_\sM \colon \sM_{k,E} \lra \Mod_E; \quad M \longmapsto \bigoplus_n H_\sM^n(M,\Q(r))
\end{equation}
is a well-defined covariant additive functor. 

Let $k$ be a number field. There is a functorial way \cite{scholl-2} of defining a subspace 
$$H_\sM^n(M,\Q(r))_\Z \subset H_\sM^n(M,\Q(r))$$
called the {\em integral part}. 
Conjecturally, it is a finite-dimensional $E$-vector space. 
If $M=h(X)$ and if there exists a proper flat model $\sX$ of $X$ over $\sO_k$ 
which is {\em regular}, it coincides with the original definition of Beilinson: 
$$
H_\sM^n(X,\Q(r))_\Z = \Im\bigl(K_{2r-n}(\sX)_\Q \ra K_{2r-n}(X)_\Q^{(r)}\bigr), 
$$
which is independent of the choice of $\sX$. 
The existence of such a model is known for curves.  
For a general motive $M=(X,p,m)$, we have by definition:
\begin{equation}\label{integral-part}
H^n_\sM(M,\Q(r))_\Z=\Im\bigl( E \ot_\Q H^n_\sM(X,\Q(r+m))_\Z \ra H^n_\sM(M,\Q(r))\bigr).
\end{equation}

\subsection{Deligne cohomology}

We briefly recall definitions and necessary facts on Deligne cohomology. 
For more details, see \cite{esnault-viehweg}, \cite{nekovar}, \cite{schneider}. 

For $X \in \sV_\C$, let $\Om_X^\bullet$ be the complex of the sheaves of holomorphic differential forms on $X(\C)$.  
For a subring $A \subset \R$ and an integer $r$, put
$$A(r)= (2\pi i)^rA \subset \C$$ 
and define a complex $A(r)_\sD$ of sheaves on $X(\C)$ to be 
$$A(r) \lra \sO_{X(\C)} \lra \Om_X^1  \lra \cdots \lra \Om_X^{r-1}$$
with $A(r)$ located in degree $0$. 
Then the {\em Deligne cohomology group} is defined by the hypercohomology group
\begin{equation*}
H_\sD^n(X,A(r)) = \mathbf{H}^n(X(\C),A(r)_\sD). 
\end{equation*}

By the distinguished triangle
$$\tau_{<r}\Om_X^\bullet [-1]  \lra A(r)_\sD \lra  A(r) \os{+1}{\lra}$$
we obtain a long exact sequence
\begin{multline*}
\cdots \ra H^{n-1}(X(\C),A(r)) \lra H^{n-1}_\dR(X(\C))/F^r  \\ 
\lra H_\sD^n(X,A(r)) \lra H^n(X(\C),A(r)) \lra \cdots 
\end{multline*}
where $F^\bullet$ denotes the Hodge filtration. 
If $n<2r$, then the kernel of 
$$H^n(X(\C),A(r)) \lra H^n_\dR(X(\C))/F^r$$ 
is torsion (cf. \cite{schneider}). 
Hence, for $A=\R$, we obtain 
an exact sequence
\begin{equation}\label{deligne-cohomology-exact}
0 \lra H^{n-1}(X(\C),\R(r)) \lra H^{n-1}_\dR(X(\C))/F^r   \lra H_\sD^n(X,\R(r)) \lra 0.
\end{equation}
The de Rham isomorphism together with  
the projection $\C \ra \R(r-1)$ induces an exact sequence  
\begin{equation}\label{deligne-cohomology-exact-2}
0 \lra F^rH_\dR^{n-1}(X(\C)) \lra H^{n-1}(X(\C),\R(r-1))  \lra H_\sD^n(X,\R(r)) \lra 0. 
\end{equation}

Now, let $X \in \sV_\R$. Then the {\em (real) Deligne cohomology group} of $X$ is defined by
\begin{equation*}
H^n_\sD(X, \R(r)) = H^n_\sD(X_\C,\R(r))^+, 
\end{equation*}
where ${}^+$ denotes the subspace fixed by $F_\infty \ot c_\infty$ (see \S 3.6). 
Under the GAGA isomorphism 
$H^n_\dR(X(\C)) \simeq H^n_\dR(X_\C/\C)$, 
$H^n_\dR(X(\C))^+$ corresponds to $H^n_\dR(X/\R)$, 
the algebraic de Rham cohomology of $X/\R$, 
on which the de Rham filtration is already defined.    
Therefore, if $n<2r$, then \eqref{deligne-cohomology-exact} and \eqref{deligne-cohomology-exact-2} induce 
the following exact sequences:
\begin{equation}\label{r-deligne-1}
0 \lra H^{n-1}(X(\C),\R(r))^+ \lra H^{n-1}_\dR(X/\R)/F^r  \lra H^{n}_\sD(X,\R(r)) \lra 0, 
\end{equation}
\begin{equation}\label{r-deligne-2}
0 \lra  F^rH^{n-1}_\dR(X/\R) \lra H^{n-1}(X(\C),\R(r-1))^+  \lra H^{n}_\sD(X,\R(r)) \lra 0. 
\end{equation}

Since the Deligne cohomology and homology form a twisted Poincar\'e duality theory \cite{gillet} (cf. \cite{jannsen}), 
the above definitions extend to motives: for $M=(X,p,m) \in \sM_{\R,E}$, we define 
\begin{equation*}
H_\sD^n(M,\R(r)) = p (E \ot_\Q H_\sD^n(X,\R(r+m))) 
\end{equation*}
and obtain a covariant functor
\begin{equation}\label{functor-deligne}
H_\sD \colon \sM_{\R,E} \lra \Mod_{E_\R}; \quad  M \longmapsto \bigoplus_n H^n_\sD(M,\R(r)). 
\end{equation}

\subsection{Regulator}

For  $X \in \sV_\C$, the theory of Chern characters gives 
the canonical regulator map from motivic cohomology to Deligne cohomology 
\begin{equation*}
r_\sD \colon H^n_\sM(X,\Q(r)) \lra H^n_\sD(X,\R(r))
\end{equation*} 
functorial in $X$ (see \cite{nekovar}, \cite{schneider}). 
If $X\in \sV_\R$, the image of the composite 
$$H_\sM^n(X,\Q(r)) \lra H_\sM^n(X_\C,\Q(r)) \lra H_\sD^n(X_\C,\R(r))$$ 
is contained in $H_\sD^n(X,\R(r))$. 
All these constructions extend to motives: for $M \in \sM_{\R,E}$ we have an $E$-linear map
\begin{equation*}
r_\sD \colon H^n_\sM(M,\Q(r)) \lra H^n_\sD(M,\R(r))
\end{equation*} 
functorial in $M$, i.e. compatible with \eqref{functor-motivic} and \eqref{functor-deligne}. 
 
Consider the case $n=r=2$. 
For a field $k$ and $X \in \sV_k$, let $X^{(1)}$ be the set of 
points on $X$ of codimension one.  Then we have (cf. \cite{nekovar})  
\begin{equation*}
H_\sM^2(X,\Q(2)) = \Ker\Bigl(K_2^{\mathrm{M}}(k(X))\ot \Q \os{T \ot \Q}{\lra} \bigoplus_{x \in X^{(1)}} k(x)^* \ot \Q\Bigr).  
\end{equation*}
Here, the Milnor $K$-group $K_2^\mathrm{M}(k(X))$ is the abelian group generated by symbols 
$\{f,g\} \in k(X)^* \ot_\Z k(X)^*$, divided by Steinberg relations $\{f,1-f\}=0\ (f \neq 0,1)$.  
The tame symbol $T=(T_x)$ is defined by 
\begin{equation*}
T_x(\{f,g\}) = (-1)^{\ord_x(f)\ord_x(g)} \left(\frac{f^{\ord_x(g)}}{g^{\ord_x(f)}}\right)(x). 
\end{equation*}
On the other hand, for $X \in \sV_\C$, we have by \eqref{deligne-cohomology-exact-2}
$$H^2_\sD(X,\R(2)) \os{\sim}{\lra}  H^1(X(\C),\R(1)) = \Hom(H_1(X(\C),\Z),\R(1)).$$

\begin{proposition}[\cite{beilinson-curve}, cf. \cite{ramakrishnan}]\label{formula-regulator}
Let $X$ be a smooth projective curve over $\C$ and 
 $e = \sum_i \{f_i,g_i\} \in H_\sM^2(X,\Q(2))$.  
Then, under the above identifications,  we have
\begin{equation*}
r_\sD(e)(\g)=  i \Im \sum_i \left( \int_\g \log f_i \, d\log g_i   - \log |g_i(P)| \int_\g d\log f_i \right) 
\end{equation*}
for a cycle $\g \in H_1(X(\C),\Z)$ with base point $P \in X(\C)$.  
\end{proposition}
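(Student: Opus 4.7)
The plan is to reduce the formula to an explicit cocycle computation in the Deligne complex, by unwinding the product structure of the regulator on Steinberg symbols, following Beilinson \cite{beilinson-curve} and Ramakrishnan \cite{ramakrishnan}.

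First I would identify the target. Since $X$ is a curve, $F^2H^1_\dR(X(\C))=0$, so the exact sequence \eqref{deligne-cohomology-exact-2} at $n=r=2$ yields
$$H^2_\sD(X,\R(2)) \os{\sim}{\lra} H^1(X(\C),\R(1)) \simeq \Hom(H_1(X(\C),\Z),\R(1)).$$
Thus $r_\sD(e)$ is determined by its pairings with $1$-cycles $\g$, and it suffices to compute this pairing.

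Next I would exploit the multiplicative structure of the regulator. Let $U = X \setminus S$, where $S$ is the union of the supports of the divisors of the $f_i$ and $g_i$. On $U$, the symbol $\{f_i,g_i\}$ is the cup product in Deligne cohomology of the classes of $f_i$ and $g_i$ in $H^1_\sD(U,\R(1))$. Via the quasi-isomorphism between the Deligne complex $\R(1)_\sD=[\R(1)\to\sO_X]$ and $\sO_X^*$ (up to shift), a nonvanishing function $f$ on $U$ corresponds to the class of $\log f$ modulo $\R(1)$. The explicit product formula on $\R(2)_\sD=[\R(2)\to\sO_X\to\Om^1_X]$ then represents $\{f_i,g_i\}$ by a $2$-cocycle whose $\Om^1_X$-component is (up to sign) $\log f_i\cdot d\log g_i$.

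To evaluate the cocycle against a closed cycle $\g$ based at $P\in U$, I would fix a branch of $\log f_i$ along $\g$ starting from $\log f_i(P)$. The resulting expression $\int_\g \log f_i\,d\log g_i$ is a priori multi-valued, but the correction term $\log|g_i(P)|\int_\g d\log f_i$ and the extraction of the imaginary part together produce a class in $\R(1)$ independent of all choices. Independence of the base point $P$ and of the homology class of $\g$ follows from direct manipulations exploiting that $\int_\g d\log f_i,\ \int_\g d\log g_i \in 2\pi i \Z$ for closed $\g$ in $U$.

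The main obstacle is verifying compatibility with the Steinberg relation $\{f,1-f\}=0$, which is needed for the formula to descend from a formula on symbols to a well-defined map on $H^2_\sM(X,\Q(2))$. This reduces to the identity
$$\Im\int_\g \log f\,d\log(1-f) \equiv \Im\bigl(\log|1-f(P)| \int_\g d\log f\bigr) \pmod{\R(1)},$$
which may be established either by pulling back to $\P^1\setminus\{0,1,\infty\}$ via $f$ and invoking the functional equation of the Bloch--Wigner dilogarithm, or by a direct integration-by-parts argument using $d\log(1-f)=-f/(1-f)\cdot d\log f$. Once this compatibility is in place, the formula follows from functoriality of the Beilinson regulator and the uniqueness of the cocycle representative.
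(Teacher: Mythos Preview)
The paper does not give its own proof of this proposition: it is stated with a citation to Beilinson \cite{beilinson-curve} and Ramakrishnan \cite{ramakrishnan} and then used as a black box. So there is nothing to compare against; your outline is essentially a sketch of the argument found in those references, and the overall strategy --- identify $H^2_\sD(X,\R(2))$ with $H^1(X(\C),\R(1))$ via \eqref{deligne-cohomology-exact-2}, represent $\{f,g\}$ as a cup product in the Deligne complex on the open $U$, and extract the $\Om^1$-component $\log f\,d\log g$ --- is the standard one.

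One small point to tighten: in your Steinberg-relation check you write the identity ``$\pmod{\R(1)}$'', but both sides are already real numbers (you have taken $\Im$), and the target $\R(1)=i\R$ is purely imaginary, so the congruence is vacuous as written. What you actually need is that
\[
i\,\Im\!\left(\int_\g \log f\,d\log(1-f) - \log|1{-}f(P)|\int_\g d\log f\right)=0
\]
exactly (not merely modulo something), and this is precisely what the single-valuedness of the Bloch--Wigner dilogarithm on $\P^1\setminus\{0,1,\infty\}$ gives. The integration-by-parts variant you mention does not by itself yield vanishing without invoking the dilogarithm functional equation, so you should rely on the first route.
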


\subsection{The Beilinson conjecture}

In the remainder of this paper, $k$ will always be a number field. 
Let $M=(X,p,m) \in \sM_{k,E}$ and assume Conjectures \ref{l-independence} and \ref{hasse-weil}. 
Recall that $L(h^i(M),s)$ is an $E_\C$-valued function. On the real axis, it takes values in 
\begin{equation}\label{E_R}
E_\R = \prod_{w |\infty}\nolimits E_w = \Bigl[\prod_{\s\colon E \hookrightarrow \C}\nolimits \C \Bigr]^+,
\end{equation}
where the script ${}^+$ denotes the fixed part by 
the complex conjugation acting both on the set $\{\s\}$ and on each $\C$.
For an integer $n$, define the {\em special value} 
$L^*(h^i(M),n) \in E_\R^*=\prod_w E_w^*$ by: 
\begin{equation*}
L^*(\s,h^i(M),n) = \lim_{s \ra n} \frac{L(\s, h^i(M),s)}{(s-n)^{\ord_{s=n} L(\s,h^i(M),s)}}.  
\end{equation*}
Note that the order of zero does not depend on $\s$. 
Moreover, Conjecture \ref{hasse-weil} and \eqref{r-deligne-1} imply that
\begin{equation*}
\rank_{E_\R} H^{i+1}_\sD(M_\R,\R(r))= \ord_{s=1-r} L(h^i(M)^\vee,s)  
\end{equation*}
if $w=i-2(m+r) \leq -3\ $  (cf. \cite{schneider}).

By composing the natural map 
$H^n_\sM(M,\Q(r))_\Z \ra H^n_\sM(M_\R,\Q(r))$
with the regulator map for $M_\R$, we obtain the regulator map for $M$
\begin{equation*}
r_\sD \colon H^n_\sM(M,\Q(r))_\Z \lra H^n_\sD(M_\R,\R(r)). 
\end{equation*} 
Let
\begin{equation*}
r_{\sD,v} \colon H^n_\sM(M,\Q(r))_\Z \lra  H^n_\sD(M_v,\R(r))
\end{equation*}
be its $v$-component. 

For an $E_\R$-module $H$, a {\em $\Q$-structure} is an $E$-submodule $H_0 \subset H$ such that 
$H_0 \ot_\Q \R =H$.  
For a ring $R$ and a free $R$-module $H$ of rank $n$, 
its {\it determinant module} is defined by 
\begin{equation*}
\det H  = \wedge^n H,
\end{equation*}
the highest exterior power. 
Let $M \in \sM_{k,E}$ and 
consider the exact sequence \eqref{r-deligne-2} for $M_\R$.  
The singular cohomology with $\R(r)$-coefficients has the natural $\Q$-structure. 
On the other hand, the de Rham cohomology has the $\Q$-structure $H^n_\dR(M/\Q)$,  
on which the Hodge filtration is already defined. 
Let 
\begin{equation*}
\sB(h^i(M)(r)) \subset \det H^{i+1}_\sD(M,\R(r)).
\end{equation*}
be the $\Q$-structure induced by \eqref{r-deligne-2}.  

\begin{conjecture}[Beilinson \cite{beilinson}]\label{conj-beilinson}
Suppose that $w =i-2(m+r)\leq -3$. 
\begin{enumerate}
\item The regulator map tensored with $\R$ 
$$r_\sD \ot_\Q \R \colon H^{i+1}_\sM(M,\Q(r))_\Z \ot_\Q \R \lra H_\sD^{i+1}(M_\R,\R(r))$$
is an isomorphism. 
\item In $\det H_\sD^{i+1}(M_\R,\R(r))$, we have 
$$
r_\sD\bigl(\det H_\sM^{i+1}(M,\Q(r))_\Z\bigr)=L^*(h^i(M)^\vee, 1-r)\sB(h^i(M)(r)).$$
\end{enumerate}
\end{conjecture}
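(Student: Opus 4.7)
The statement is Beilinson's conjecture in its general form, which is a famously open problem; no proof of the full statement is to be expected, and the rest of this paper aims only at evidence for it. My plan, following the strategy this paper will carry out, is therefore to outline how one would attempt the conjecture in the special case $M = X_N^{a,b}$ (or its descent $X_N^{[a,b]_k}$) with $i=1$, $m=0$, $r=2$, for which the weight condition $w = -3$ is satisfied.

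The first step is to produce explicit classes in $H^2_\sM(M,\Q(2))_\Z$. The description of motivic $H^2(\cdot,\Q(2))$ as the kernel of the tame symbol on $K_2^{\mathrm{M}}$ of the function field makes Ross's element $e_N = \{1-x,\,1-y\}$ a natural candidate, and projecting by $p_N^{a,b}$ yields a class $e_N^{a,b}$ in each isotypic summand; when the target rank exceeds one, the supply can be enlarged by the $\Sigma_3$-action permuting the homogeneous coordinates. The second step is to compute $r_{\sD,v}(e_N^{a,b})$ at each infinite place $v$. Proposition \ref{formula-regulator} expresses this as an integral over a $1$-cycle on $X_N(\C)$ of a logarithmic form built from $1-x$ and $1-y$; the plan is to identify this integral with a boundary value of Appell's $F_3$, packaged as $\wt{F}(a/N,b/N)$. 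Since each target $H^2_\sD(X_{N,v}^{a,b},\R(2))$ is one-dimensional, part (i) reduces on each summand to the \emph{non-vanishing} of $\wt{F}(a/N,b/N)$, which one would settle via an integral representation of $F_3$ exhibiting a strictly positive integrand.

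For part (ii), the plan is to compare this regulator value with $L^*(X_N^{-a,-b}(1),-1)$, governed by Theorem \ref{fermat-L} and the functional equation of Corollary \ref{functional-equation} by a Jacobi-sum Hecke $L$-value, after identifying the $\Q$-structure $\sB(h^1(M)(2))$ through the de Rham period (essentially $B(a/N,b/N)$). The main obstacle is precisely this comparison modulo $\Q^*$: even granting closed forms on both sides, matching the motivic $\Q$-structure with $\sB$ requires a delicate period analysis that is, in general, out of reach. A secondary obstacle already for (i) is producing enough independent classes when the isotypic target has rank greater than one; since only Milnor symbols of $e_N$-type and their $\Sigma_3$-translates are at hand, complete surjectivity can realistically be established only for very small $N$, which is why the paper restricts its ultimate results to the cases $N=3,4,5,6,7$ and settles for the weaker surjectivity statement rather than the full conjecture.
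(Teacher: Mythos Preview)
Your recognition that this is a conjecture, not a theorem, and that the paper aims only at partial evidence, is correct; your outline of the strategy (produce $e_N^{a,b}$ via Ross's symbol and the $S_3$-action, compute $r_{\sD,v}$ as a hypergeometric value, deduce surjectivity on each one-dimensional summand, and restrict full surjectivity claims to small $N$) matches the paper's approach closely.

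One inaccuracy is worth flagging. You write that part (i) on each summand reduces to the non-vanishing of $\wt F(a/N,b/N)$, to be settled by positivity of the integrand. In fact the regulator coefficient (Theorem \ref{main-theorem}) is a \emph{difference}
\[
\wt F\bigl(\tfrac{\langle ha\rangle}{N},\tfrac{\langle hb\rangle}{N}\bigr)
-\wt F\bigl(\tfrac{\langle -ha\rangle}{N},\tfrac{\langle -hb\rangle}{N}\bigr),
\]
so mere positivity of $\wt F$ is not enough. The paper instead uses the integral representation \eqref{equation-integral} to show that $\wt F(\alpha,\beta)$ is strictly monotone in each argument (Proposition \ref{decreasing}), whence the difference vanishes only when $\alpha+\beta=1$, which is excluded for $(a,b)\in I_N$. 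This monotonicity step is the actual crux of the non-vanishing argument.
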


\begin{remark}
The finite generation of the integral part of the motivic cohomology 
and the injectivity of the regulator map are in general very difficult. 
A weaker version of the conjecture is to find an 
$E$-linear subspace of $H^{i+1}_\sM(M,\Q(r))_\Z$ 
for which the same statements hold. 
The conjecture is in fact formulated for $w <0$ (see \cite{beilinson}, \cite{d-s}, \cite{nekovar}, \cite{schneider}). 
\end{remark}

In particular, if $X$ is a projective smooth curve over $k$, the conjecture (i) implies
$$\dim_\Q H_\sM^2(X,\Q(2))_\Z =[k:\Q] \cdot \mathrm{genus}(X).$$
For our Fermat motives, by the description of the Deligne cohomology which shall be given in \S 4.6,  
we should have 
$$\dim_{E_N} H_\sM^2(X_N^{a,b},\Q(2))_\Z 
=\dim_{E_{N,k}} H_\sM^2(X_N^{[a,b]_k},\Q(2))_\Z = [K_N:\Q]/2 
$$
for $(a,b) \in I_N^{\prim}$.
 
\subsection{Elements in motivic cohomology}

Starting with Ross' element, we define elements in the motivic cohomology 
of Fermat motives and study their relations. 

Let $X_N$ be the Fermat curve over a number field $k$. 
As explained in \cite{ross}, p.228, we have 
$$H_\sM^2(X_N,\Q(2))_\Z = H_\sM^2(X_N,\Q(2)),$$ 
hence by \eqref{integral-part}, we have
$$
H_\sM^2(X_N^{a,b},\Q(2))_\Z =H_\sM^2(X_N^{a,b},\Q(2)), \  
H_\sM^2(X_N^{[a,b]_k},\Q(2))_\Z = H_\sM^2(X_N^{[a,b]_k},\Q(2)). 
$$
If we put 
\begin{equation*}
e_N = \{1-x,1-y\} \in K^M_2(k(X_N)), 
\end{equation*}
then the tame symbol 
$T(e_N)$ is torsion (\cite{ross}, Theorem 1),  
so $e_N$ defines an element of $H^2_\sM(X_N,\Q(2))_\Z$. 
\begin{remark}
The divisors of $1-x$, $1-y$ and their $G_N$-translations are supported on torsion points of $X_N$, embedded in its Jacobian variety by choosing as a base point any point with $x_0y_0z_0=0$. 
\end{remark}

\begin{definition}
Define{\rm :} 
\begin{alignat*}{2}
&e_N^{a,b} = p_N^{a,b} \pi_{K_N/k}^* e_N & & \in H_\sM^2(X_N^{a,b},\Q(2))_\Z, \\
&e_N^{[a,b]_k} =p_N^{[a,b]_k} e_N &&\in H_\sM^2(X_N^{[a,b]_k},\Q(2))_\Z. 
\end{alignat*}
\end{definition}

\begin{proposition}\label{e_N} 
If $N=N'd$, $(a,b) \in G_N$ and $(a',b')\in G_{N'}$, then we have{\rm :}
\begin{enumerate}
	\item $\pi_{N/N',k *}e_N = e_{N'}$. 
	\item $\pi_{N/N',K_N/K_{N'}}^*e_{N'}^{a',b'} = d^2 e_N^{a'd,b'd}$. 
	\item $\pi_{N/N',K_N *} e_N^{a,b} = \begin{cases}
			\pi_{K_N/K_{N'}}^* e_{N'}^{a',b'} & \text{if $(a,b)=(a'd,b'd), \exists(a',b')\in G_{N'}$}, \\
						0 & \text{otherwise}. 								
						\end{cases}$ 
	\item  $\pi_{N/N',k}^* e_{N'}^{[a',b']_k} = d^2 e_N^{[a'd,b'd]_k}$. 
	 \item $\pi_{N/N',k *} e_N^{[a,b]_k} 
	= \begin{cases} e_{N'}^{[a',b']_k} & \text{if $(a,b)=(a'd,b'd), \exists(a',b') \in G_{N'}$}, \\
	0 & \text{otherwise}. \end{cases}$
	\item $\pi_{N,K_N/k}^* e_N^{[a,b]_k} = \sum_{(c,d)\in[a,b]_k} e_N^{c,d}$. 
	\item $\pi_{N,K_N/k *} e_N^{a,b} = \frac{[K_N:k]}{\sharp[a,b]_k}e_N^{[a,b]_k} \ (=e_N^{[a,b]_k} \ 
	\text{if} \ (a,b)\in I_N^\prim)$. 
\end{enumerate}
\end{proposition}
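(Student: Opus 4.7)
Plan.
I will prove the seven parts in the order (i), (ii), (iii), (vi), (vii), (iv), (v), each later part using earlier ones. The common strategy rests on two tools: the projection formula $\pi^*\pi_*=\sum_{g\in G}g$ for any Galois cover with group $G$, together with injectivity of $\pi^*$ on motivic cohomology with $\Q$-coefficients (which follows from $\pi_*\pi^*=\deg(\pi)$); and the commutation relations between projectors and $\pi^*$, $\pi_*$ recorded in Propositions \ref{prop-level} and \ref{prop-field} and Lemma \ref{projector-smaller}.

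For (i), I work over $K_N$ (legitimate by injectivity of $\pi_{N,K_N/k}^*$). Using the factorisation $1-x_N^d=\prod_{i=0}^{d-1}(1-\om^i x_N)$ with $\om=\z_N^{N'}$ a primitive $d$-th root of unity, the Galois cover $\pi_{N/N',K_N}$ with group $G_{N/N'}$ of order $d^2$ gives
\[
\pi^*e_{N'}=\{1-x_N^d,\,1-y_N^d\}=\sum_{g\in G_{N/N'}}g\cdot e_N=\pi^*\pi_*e_N,
\]
whence $\pi_*e_N=e_{N'}$. For (ii), the same Galois expansion, the commutation $\pi^*\circ p_{N'}^{a',b'}=p_N^{a'd,b'd}\circ\pi^*$ (verified inside the proof of Proposition \ref{prop-level}(i)), and the identity $g\cdot p_N^{a'd,b'd}=p_N^{a'd,b'd}$ for $g\in G_{N/N'}$ (since $\theta_N^{a'd,b'd}|_{G_{N/N'}}=1$) yield
\[
\pi^*e_{N'}^{a',b'}=p_N^{a'd,b'd}\sum_{g\in G_{N/N'}}g\cdot e_N=d^2 e_N^{a'd,b'd}.
\]
Statement (iii) is then formal: $\pi^*\pi_*e_N^{a,b}=\bigl(\sum_{g\in G_{N/N'}}\theta_N^{a,b}(g)\bigr)e_N^{a,b}$ equals $d^2 e_N^{a,b}$ when $d\mid a,b$ and vanishes otherwise by character orthogonality; comparison with $\pi^*$ of the claimed right-hand side via (ii) plus injectivity finishes.

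Statement (vi) is immediate from $p_N^{[a,b]_k}=\sum_{(c,d)\in[a,b]_k}p_N^{c,d}$ and $\pi_{N,K_N/k}^*e_N=e_N$. For (vii), apply $\pi^*\pi_*=\sum_{h\in H_{N,k}}h$ for the Galois cover $\pi_{N,K_N/k}$ with group $H_{N,k}$. Since $e_N$ is $\Q$-rational, $h$ fixes its pullback, while Lemma \ref{projector-smaller}(i) translates into $hp_N^{a,b}h^{-1}=p_N^{h^{-1}a,h^{-1}b}$; combining these gives $h\cdot e_N^{a,b}=e_N^{h^{-1}a,h^{-1}b}$. Summing over $H_{N,k}$, orbit-stabiliser together with (vi) yields $\pi^*\pi_*e_N^{a,b}=\frac{[K_N:k]}{\sharp[a,b]_k}\pi^*e_N^{[a,b]_k}$, and injectivity of $\pi^*$ concludes.

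For (iv), I pull back via $\pi_{N,K_N/k}^*$; by (vi) applied to $N'$ and then (ii), the left-hand side becomes $d^2\sum_{(c',d')\in[a',b']_k}e_N^{c'd,d'd}$, which matches $d^2\pi_{N,K_N/k}^*e_N^{[a'd,b'd]_k}$ via the bijection $[a',b']_k\simeq[a'd,b'd]_k$ given by $(c',d')\mapsto(c'd,d'd)$ (well-defined and $H_{N,k}$-equivariant through the surjection $H_{N,k}\twoheadrightarrow H_{N',k}$, whose kernel fixes every element of $dG_{N'}\subset G_N$). Finally (v): when $(a,b)=(a'd,b'd)$, apply $\pi_{N/N',k\,*}$ to (iv) and use $\pi_*\pi^*=d^2$; in the remaining case, the base-change identity $\pi_{N',K_N/k}^*\circ\pi_{N/N',k\,*}=\pi_{N/N',K_N\,*}\circ\pi_{N,K_N/k}^*$ (for the Cartesian square), combined with (vi) and the vanishing clause of (iii) (no element of the orbit $[a,b]_k$ is divisible by $d$), gives $\pi_{N',K_N/k}^*\pi_{N/N',k\,*}e_N^{[a,b]_k}=0$, so the class itself vanishes by injectivity. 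The main difficulty is purely organisational — keeping straight the tower $k\subset K_{N'}\subset K_N$ together with the divisibility $N=N'd$ and the resulting compatibilities of the projectors with $\pi^*$, $\pi_*$ and the Galois action; modulo this, every step reduces to character orthogonality and the factorisation $1-x^d=\prod_i(1-\om^i x)$.
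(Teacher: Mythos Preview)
Your proof is correct and the underlying mechanisms are the same as the paper's---the commutation relations between projectors and $\pi^*,\pi_*$ from Propositions~\ref{prop-level} and~\ref{prop-field}, together with the Galois relation $\pi^*\pi_*=\sum_g g$ and character orthogonality. The organisation differs in two places worth noting.

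For (i), the paper works directly over $k$ without invoking the Galois structure: it factors $\pi_{N/N',k}$ through the intermediate curve $X_{N,N'}:x^N+y'^{N'}=1$ and applies the projection formula for the cup product twice, obtaining $\pi_*\{1-x,1-y\}=\{1-x',1-y'\}$ by successive norms in each variable. Your route---base-change to $K_N$, use the factorisation $1-x^d=\prod_i(1-\omega^i x)$ to recognise $\pi^*e_{N'}=\sum_{g\in G_{N/N'}}g\cdot e_N=\pi^*\pi_*e_N$, then descend---is equally valid but trades one projection-formula computation for a base-change/injectivity step. For (vii), the paper reduces to the primitive case via (ii) and then invokes the identity $\pi_*\circ p_N^{a,b}\circ\pi^*=p_N^{[a,b]_k}$ (equation~\eqref{pi-p-pi}) established in the proof of Proposition~\ref{prop-field}(ii). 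Your argument via $\pi^*\pi_*=\sum_{h\in H_{N,k}}h$ and orbit-stabiliser is more direct and handles the non-primitive case in one stroke; note only that the conjugation gives $h\cdot e_N^{a,b}=e_N^{h'a,h'b}$ with $h'\in\{h,h^{-1}\}$ depending on convention, which is immaterial after summing over $H_{N,k}$.

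Your reordering---proving (vi), (vii) before (iv), (v) and using them together with base change for the Cartesian square---is a legitimate alternative to the paper's direct appeal to the commutativity of diagram~\eqref{four-2}. Both reduce (iv), (v) to the already-established (ii), (iii) after pulling back to $K_N$.
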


\begin{proof} 
(i) \ Let $(x,y)$ (resp. $(x',y')$) be the affine coordinates of $X_N$ (resp. $X_{N'}$), so that $x'=x^d$, $y'=y^d$. 
Consider the intermediate curve 
$$X_{N,N'} \colon x^N+y'^{N'}=1,$$
with natural morphisms $X_N \os{\pi_1}{\lra} X_{N,N'} \os{\pi_2}{\lra} X_{N'}$. 
By the projection formula for the cup product in $K$-theory,  we have: 
\begin{align*}
&\pi_{N/N' *}\{1-x,1-y\} = \pi_{2 *} \pi_{1 *}\{\pi_1^*(1-x),1-y\}\\ 
& = \pi_{2 *} \{1-x,\pi_{1 *}(1-y)\} = \pi_{2 *}\{1-x,1-y'\} \\
& = \pi_{2 *} \{1-x, \pi_2^*(1-y')\} = \{\pi_{2 *}(1-x), 1-y'\} \\& = \{1-x',1-y'\}. 
\end{align*}

(ii) \ Since 
$$\pi_{N/N',K_N/k}^*e_{N'} = \left(\sum_{g \in G_{N/N'}}\nolimits g\right) \pi_{N,K_N/k}^*e_N$$ 
and $p_N^{a'd,b'd}g=p_N^{a'd,b'd}$ for $g \in G_{N/N'}$, this follows from the commutativity of \eqref{four-1}. 

(iii) \  The first case follows from (i) and the commutativity of \eqref{four-1}. 
For the second case, $\pi_{N/N',K_N}^*$ is injective and we have 
\begin{align*}
\pi_{N/N',K_N}^*  \pi_{N/N',K_N *} p_N^{a,b}
= \sum_{g \in G_{N/N'}} g p_N^{a,b}
= \sum_{g \in G_{N/N'}} \theta_N^{a,b}(g) p_N^{a,b}=0. 
\end{align*}
(iv) and (v) follow similarly as (ii) and (iii) from the commutativity of \eqref{four-2}. 
(vi) is clear by definition. 

(vii) \ Using (ii), we are reduced to the primitive case, which follows from \eqref{pi-p-pi}. 
\end{proof}

\subsection{Deligne cohomology of Fermat motives}

We calculate the Deligne cohomology of $X_N^{a,b} \in \sM_{K_N,E_N}$. 
Note that both $K_N$ and $E_N$ are totally imaginary. 

Let $M \in \sM_{k,E}$ be a motive, and for a complex place $v$ of $k$, 
let $\t, \ol\t \colon k \hookrightarrow \C$ be the conjugate embeddings inducing $v$, 
and put $M_\t = \vphi_{\C/k,\t}^*M$. 
Since $F_\infty$ exchanges the components of 
$$H_\sD^n(M_{v,\C},\R(r)) = H_\sD^n(M_\t,\R(r)) \times H_\sD^n(M_{\ol\t},\R(r)), $$
we have canonically 
\begin{equation*}
H_\sD^n(M_v,\R(r)) = H_\sD^n(M_\t,\R(r)).
\end{equation*}
In particular, for each infinite place $v$ of $K_N$ and a choice of $\tau$, 
we have an identification of $E_{N,\R}$-modules  
\begin{equation}\label{identification}
H^2_\sD(X_{N,v}^{a,b},\R(2))
=  H^2_\sD(X_{N,\t}^{a,b},\R(2)) = H^1(X_{N,\t}^{a,b},\R(1)).   
\end{equation}
The $\Q$-structure $\sB$ splits as
$$\sB(h^1(X_N^{a,b})) = \bigoplus_{v | \infty} \sB(h^1(X_{N,v}^{a,b}))$$
where $\sB(h^1(X_{N,v}^{a,b}))$ 
 corresponds via \eqref{identification} to $H^1(X_{N,\t}^{a,b},\Q(1))$. 
 
Similarly to the $\ell$-adic case, for an $E_\R$-module $V$, let 
$$V = \bigoplus_{w | \infty} V_w, \quad V_w=E_w \ot_{E_\R} V$$ 
be the decomposition corresponding to \eqref{E_R}.   
If $w$ is a complex place and $\s,\ol\s \colon E \hookrightarrow \C$ 
are the embeddings inducing $w$, then we have 
$$V_w = \bigl[V_\s \oplus V_{\ol\s}\bigr]^+,$$
where we put $V_\s= \C \ot_{E_\R, \s} V$, and $+$ denotes the part fixed by the complex conjugation acting 
both on the set $\{\s,\ol\s\}$ and on $\C$. 
Therefore we have a canonical isomorphism $V_w = V_\s$. 
For $v \in V$, let $v_\s \in V_\s$ denote its $\s$-component. 

Applying these to our situation, for each infinite place $v$ of $K_N$ 
and an embedding $\s\colon E_N \hookrightarrow \C$, 
we obtain an identification
\begin{equation}\label{identification-sigma}
H_\sD^1(X_{N,v}^{a,b},\R(2))_\s = H^1(X_{N,\t}^{a,b}(\C),\R(1))_\s = \s(p_N^{a,b})H^1(X_{N,\t}(\C),\C),
\end{equation}
the subspace on which $G_N$ acts by the $\C^*$-valued character $\s\theta_N^{a,b}$.

Now, for the moment, let $X_N$ be the Fermat curve over $\C$. 
By choosing a primitive root of unity $\z_N \in \C$, $G_N$ acts on $X_N$.  
Let us recall the structure of the homology and cohomology groups of $X_N(\C)$. 
See \cite{otsubo}, \cite{rohrlich} for the details.  
\begin{definition}
Define a path by
$$\d_N\colon [0,1] \lra X_N(\C); \quad t \longmapsto (t^{\frac 1 N},(1-t)^{\frac 1 N})$$
where the branches are taken in $\R$. 
Then, 
$(1-g_N^{r,0})(1-g_N^{0,s})\d_N$ becomes a cycle and defines an element of $H_1(X_N(\C),\Q)$. 
Put
$$\g_N = \frac{1}{N^2} \sum_{(r,s) \in G_N} (1-g_N^{r,0})(1-g_N^{0,s})\d_N.$$
It does not depend on the choice of $\z_N$. 
\end{definition}

\begin{definition}
For integers $a$, $b$, define a differential form on $X_N(\C)$ by
\begin{equation*}
\om_N^{a,b} = x^ay^{b-N}\frac{dx}{x} = -x^{a-N}y^b \frac{dy}{y}.
\end{equation*}
For $(a,b) \in G_N$, put $\om_N^{a,b} = \om_N^{\langle a \rangle, \langle b \rangle}$, 
where $\langle a \rangle \in \{1,2,\dots, N\}$ denotes the integer representing $a$. 
If $(a,b) \in I_N$, then $\om_N^{a,b}$ is of the second kind (i.e. has no residues), 
so defines an element of $H^1(X_N(\C),\C)$, which we denote by the same letter. 
Moreover, $\om_N^{a,b}$ is of the first kind (i.e. holomorphic) if and only if $\angle{a}+\angle{b}<N$. 
\end{definition}

\begin{proposition}\label{homology}  \
\begin{enumerate}
\item $H_1(X_N(\C),\Q)$ is a cyclic $\Q[G_N]$-module generated by $\g_N$. 
\item The set $\bigl\{\om_N^{a,b} \bigm| (a,b) \in I_N\bigr\}$ is a basis of $H^1(X_N(\C),\C)$. 
\item For $(a,b) \in I_N$, we have 
$$\int_{\g_N} \om_N^{a,b} = \frac{1}{N} B\bigl(\tfrac{\angle a}{N},\tfrac{\angle b}{N}\bigr),$$
where $B(\a,\b)$ is the Beta function.  
\end{enumerate}
\end{proposition}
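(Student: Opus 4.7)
I would prove (ii) first, then (iii) by direct integration, and deduce (i) from both. For (ii), I compute the pullback action of $g_N^{r,s}$ on $\om_N^{a,b}$: direct substitution of $(x,y) \mapsto (\z_N^r x, \z_N^s y)$ gives $(g_N^{r,s})^* \om_N^{a,b} = \z_N^{ar+bs}\, \om_N^{a,b} = \theta_N^{a,b}(g_N^{r,s})\, \om_N^{a,b}$, so for distinct $(a,b) \in G_N$ the forms $\om_N^{a,b}$ lie in distinct character eigenspaces of the $G_N$-action on meromorphic $1$-forms, and are in particular linearly independent. Next I verify that for $(a,b) \in I_N$ the form has no residues: on the affine part, the conditions $\angle a, \angle b \geq 1$ together with the relation $x^{N-1}\,dx = -y^{N-1}\,dy$ exclude poles at the zeros of $x$ and $y$, while in the chart at infinity $(u,v) = (z_0/x_0,\, y_0/x_0)$ the condition $a+b \not\equiv 0 \pmod N$ rules out residues at the $N$ points at infinity. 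Thus each $\om_N^{a,b}$ with $(a,b) \in I_N$ defines a class in $H^1(X_N(\C),\C)$, and since $\sharp I_N = (N-1)(N-2) = 2g = \dim_\C H^1$, these classes form a basis.

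\medskip
For (iii), I parametrize $\d_N(t) = (t^{1/N},\, (1-t)^{1/N})$, $t \in [0,1]$, from which $\om_N^{a,b}|_{\d_N} = \tfrac{1}{N}\, t^{\angle a/N - 1}(1-t)^{\angle b/N - 1}\,dt$ and hence $\int_{\d_N}\om_N^{a,b} = \tfrac{1}{N} B\bigl(\tfrac{\angle a}{N}, \tfrac{\angle b}{N}\bigr)$. Using the elementary identity $\int_{g\d_N}\om = \int_{\d_N} g^*\om$ together with the character computation from (ii), expansion of the definition of $\g_N$ yields
\[
\int_{\g_N}\om_N^{a,b} = \tfrac{1}{N^2}\Bigl(\textstyle\sum_{r=0}^{N-1}(1-\z_N^{ar})\Bigr)\Bigl(\textstyle\sum_{s=0}^{N-1}(1-\z_N^{bs})\Bigr)\int_{\d_N}\om_N^{a,b},
\]
and for $(a,b) \in I_N$ each inner sum equals $N$, producing the stated Beta value.

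\medskip
For (i), $H_1(X_N(\C),\Q)$ has $\Q$-rank $(N-1)(N-2) = \sharp I_N$. Tensoring with $\C$, the semisimple $G_N$-action decomposes $H_1(X_N(\C),\C) = \bigoplus_{(a,b)\in G_N} V^{a,b}$ into character eigenspaces that pair nondegenerately with the corresponding eigenspaces of $H^1(X_N(\C),\C)$; by (ii), $\dim_\C V^{a,b} = 1$ for $(a,b) \in I_N$ and is $0$ otherwise, so $H_1(X_N(\C),\C)$ is cyclic as a $\C[G_N]$-module. By (iii), the projection of $\g_N$ onto $V^{a,b}$ pairs non-trivially with $\om_N^{a,b}$ for each $(a,b) \in I_N$, hence is non-zero, so $\g_N$ generates $H_1(X_N(\C),\C)$ over $\C[G_N]$. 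Since $\g_N$ is $\Q$-rational, the surjectivity of $\C[G_N] \to H_1(X_N(\C),\C)$ descends by faithful flatness of $\C/\Q$ to surjectivity of $\Q[G_N] \to H_1(X_N(\C),\Q)$. The main obstacle in all three parts is the convention bookkeeping — right $G_N$-action on schemes versus left action on forms, and identifying which character a homology eigenspace carries via the integration pairing — but this is local to each step rather than a structural difficulty.
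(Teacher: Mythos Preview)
The paper does not actually prove this proposition; it states it with the sentence ``See \cite{otsubo}, \cite{rohrlich} for the details'' and uses it as input. So there is no in-paper argument to compare against, and your outline stands on its own.

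Your argument is essentially correct, but there is one small logical slip in the order you propose. In (ii) you deduce linear independence of the $\om_N^{a,b}$ from the fact that they lie in distinct $G_N$-eigenspaces of the space of \emph{meromorphic} $1$-forms, and then conclude by the count $\sharp I_N=2g$ that their cohomology classes form a basis. But linear independence as meromorphic forms does not by itself give linear independence of the classes in $H^1$: a nonzero eigenform could in principle be exact. What you need is that each class is nonzero in $H^1$; then, since the $G_N$-action descends to cohomology and the classes sit in distinct eigenspaces there, linear independence and the dimension count finish the job. The nonvanishing is exactly what your computation in (iii) supplies, since $B\bigl(\tfrac{\langle a\rangle}{N},\tfrac{\langle b\rangle}{N}\bigr)\neq 0$ shows each $\om_N^{a,b}$ pairs nontrivially with the cycle $\gamma_N$. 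So either prove (iii) before (ii), or in (ii) defer the nonvanishing to (iii) --- precisely as you already do in your argument for (i). With that reordering the proof is complete; the residue check at infinity and the faithful-flatness descent for (i) are both fine as written.
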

Note that $\om_N^{a,b}$ is an eigenform for the $G_N$-action: 
$$g_N^{r,s}\om_N^{a,b} = \z_N^{ar+bs} \om_N^{a,b}.$$
We normalize $\om_N^{a,b}$ as 
\begin{equation}\label{omega-normalized}
\wt\om_N^{a,b} := \Bigl(\frac 1 N B\bigl(\tfrac{\angle a}{N},\tfrac{\angle b}{N}\bigr)\Bigr)^{-1}  \om_N^{a,b}.
\end{equation}
Then we have for any $g_N^{r,s} \in G_N$
\begin{equation}\label{period}
\int_{g_N^{r,s}\g_N}\wt\om_N^{a,b}=\int_{\g_N}g_N^{r,s}\wt\om_N^{a,b}=\z_N^{ar+bs}.
\end{equation}
Hence we have 
\begin{equation*}
c_\infty \wt\om_N^{a,b} =\wt\om_N^{-a,-b}.
\end{equation*}

Let us return to the original situation over $K_N$, and for each embedding $\tau \colon K_N \hookrightarrow \C$, 
let 
\begin{equation*}
\g_{N,\t} \in H_1(X_{N,\t}(\C),\Q), \quad \om_{N,\t}^{a,b},  \wt\om_{N,\t}^{a,b} \in H^1(X_{N,\t}(\C),\C)
\end{equation*}
be the corresponding classes for $X_{N,\t}(\C) = \Mor_{K_N,\t}(\C,X_{N,K_N})$. 
By Proposition \ref{homology} (i), it follows that 
$$H_1(X_{N,\t}^{a,b}(\C),\Q) := p_N^{a,b} (E_N \ot_\Q H_1(X_{N,\t}(\C),\Q))$$
is a one-dimensional $E_N$-vector space generated by $p_N^{a,b}\g_{N,\t}$. 

\begin{definition}
For each infinite place $v$ of $K_N$, choose $\t$ inducing $v$. For $(a,b) \in I_N$, 
define 
$$\l_{N,v}^{a,b} \in H_\sD^2(X_{N,v}^{a,b},\R(2))$$ 
to be the element corresponding to 
$2\pi i \cdot(p_N^{a,b}\g_{N,\t})^\vee$ under the identification \eqref{identification}. 
Only the sign depends on the choice of $\t$. 
\end{definition}

\begin{proposition}\label{basis-deligne}
Let $(a,b) \in I_N$ and the notations be as above. Then, 
\begin{enumerate}
\item 
$H_\sD^2(X_{N,v}^{a,b},\R(2))=E_{N,\R} \l_{N,v}^{a,b}$ and 
$\sB(h^1(X_{N,v}^{a,b}))=E_N \l_{N,v}^{a,b}$. 
\item
Under the identification \eqref{identification-sigma},  we have
$$(\l_{N,v}^{a,b})_\s = 2 \pi i \cdot \wt\om_{N,\t}^{ha,hb}$$ 
where $h \in H_N$ is the element such that 
$\t(\z_N)^h=\s(\xi_N)$. 
\end{enumerate}
\end{proposition}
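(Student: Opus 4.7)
The plan is to unwind the identifications \eqref{identification} and \eqref{identification-sigma} and then reduce both assertions to a single character orthogonality computation, using the basis of $H_1$ provided by Proposition \ref{homology}.

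For (i), Proposition \ref{homology}(i) presents $H_1(X_{N,\t}(\C),\Q)$ as a cyclic $\Q[G_N]$-module with generator $\g_{N,\t}$, so applying the projector $p_N^{a,b}\in E_N[G_N]$ produces a free rank-one $E_N$-module
\[
H_1(X_{N,\t}^{a,b}(\C),\Q)=E_N\cdot p_N^{a,b}\g_{N,\t}.
\]
Dualizing in the $E_N$-module sense and multiplying by $2\pi i$ gives a generator of $H^1(X_{N,\t}^{a,b}(\C),\Q(1))$, which is $\l_{N,v}^{a,b}$ by definition. Since the Beilinson $\Q$-structure coming from \eqref{r-deligne-2} is nothing but the Betti $\Q$-subspace inside $H^1(X_{N,\t}^{a,b}(\C),\R(1))$ (the de Rham factor $F^2H^1_\dR$ vanishes for a curve), assertion (i) follows.

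For (ii), unwinding \eqref{identification-sigma} shows that $(p_N^{a,b}\g_{N,\t})^\vee_\s$ is the unique element of $\s(p_N^{a,b})H^1(X_{N,\t}(\C),\C)$ that pairs to $1$ under integration with $\s(p_N^{a,b})\g_{N,\t}$. I verify that $\wt\om_{N,\t}^{ha,hb}$ has both of these properties. The eigenvalue calculation $g_N^{r,s}\wt\om_{N,\t}^{ha,hb}=\t(\z_N)^{h(ar+bs)}\wt\om_{N,\t}^{ha,hb}$, combined with the defining relation $\t(\z_N)^h=\s(\xi_N)$, shows that $\wt\om_{N,\t}^{ha,hb}$ carries the character $\s\theta_N^{a,b}$ and hence lies in the required isotypic component. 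The pairing then collapses, via \eqref{period} and character orthogonality, to
\[
\int_{\s(p_N^{a,b})\g_{N,\t}}\wt\om_{N,\t}^{ha,hb}=\frac{1}{N^2}\sum_{(r,s)\in G_N}\s(\xi_N)^{-(ar+bs)}\s(\xi_N)^{ar+bs}=1,
\]
whence $(p_N^{a,b}\g_{N,\t})^\vee_\s=\wt\om_{N,\t}^{ha,hb}$ and $(\l_{N,v}^{a,b})_\s=2\pi i\cdot\wt\om_{N,\t}^{ha,hb}$, as claimed.

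The principal obstacle is purely bookkeeping: three parallel identifications must be kept aligned (Deligne versus Betti via \eqref{identification}; the $\s$-decomposition via \eqref{identification-sigma}; Betti versus de Rham), and in particular the interplay between the $E_N$-valued character $\theta_N^{a,b}$, the embedding $\s$, and the index twist $h\in H_N$ must be tracked carefully when passing from $E_N$-coefficients to $\C$-coefficients. Once the conventions are pinned down, the verification is routine.
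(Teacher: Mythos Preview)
Your proof is correct and is exactly the routine unwinding of definitions that the paper leaves implicit (no proof is given there). The one-dimensionality of $H_1(X_{N,\t}^{a,b}(\C),\Q)$ over $E_N$ is already stated just before the definition of $\l_{N,v}^{a,b}$, and your verification of (ii) via the eigenvalue check and the period computation \eqref{period} is the intended argument.
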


\begin{remark}\label{remark-h-number} 
The equality \eqref{h-number} follows since
$$H^1(X_{N,v}^{a,b}(\C),\C) = H^1(X_{N,\t}^{a,b}(\C),\C) \oplus H^1(X_{N,\ol\t}^{a,b}(\C),\C)$$
and exactly one of $\om_{N,\t}^{ha,hb}$, $\om_{N,\ol\t}^{-ha,-hb}$ is holomorphic. 
\end{remark}

\subsection{Main results}
We state the main results of this paper. 

For $\a \in \C - \{0,-1,-2, \dots\}$ and a non-negative integer $n$, let 
\begin{equation*}
(\a,n)=\a(\a+1)(\a+2)\cdots (\a+n-1) = \frac{\vG(\a+n)}{\vG(\a)}
\end{equation*}
be the Pochhammer symbol, where $\vG(\a)$ is the Gamma function. 

\begin{definition}\label{def-F}
Define a function of positive real numbers $\a$, $\b$, by
$$\wt F(\a,\b)= \frac{\vG(\a)\vG(\b)}{\vG(\a+\b+1)} \sum_{m, n \geq 0} \frac{(\a,m)(\b,n)}{(\a+\b+1,m+n)},$$
which takes values in positive real numbers. Its convergence will be explained later. 
\end{definition}

The main result of this paper is the following.
 
\begin{theorem}\label{main-theorem} 
Let $(a,b) \in I_N$, and $v$ be an infinite place of $K_N$ induced by $\t$. 
Consider the regulator map
$$r_{\sD,v} \colon H_\sM^2(X_N^{a,b},\Q(2))_\Z  \lra H_\sD^2(X_{N,v}^{a,b},\R(2)).$$
Then we have
$$r_{\sD,v}(e_N^{a,b}) = \mathbf{c}_{N,v}^{a,b} \l^{a,b}_{N,v}$$
with $\mathbf{c}_{N,v}^{a,b} \in E_{N,\R}^*$. 
For any embedding 
$\s \colon E_N \hookrightarrow \C$, we have
$$\s(\mathbf{c}_{N,v}^{a,b}) = -\frac{1}{4N^2\pi i}
\Bigl(\wt F\bigl(\tfrac{\angle{ha}}{N},\tfrac{\angle{hb}}{N}\bigr)-\wt{F}\bigl(\tfrac{\angle{-ha}}{N},\tfrac{\angle{-hb}}{N}\bigr) \Bigr) \ \in \C^* $$
where $h \in H_N$ is the unique element satisfying $\t(\z_N)^h=\s(\xi_N)$. 
In particular, $r_{\sD,v} \ot_\Q \R$ is surjective. 
\end{theorem}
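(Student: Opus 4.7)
The plan is to apply Beilinson's explicit formula (Proposition \ref{formula-regulator}) directly to $e_N=\{1-x,1-y\}$ and then extract the $(a,b)$-isotypic component, recognizing the answer as a value of Appell's $F_3$. Since $H_\sD^2(X_{N,v}^{a,b},\R(2))$ is free of rank one over $E_{N,\R}$ by Proposition \ref{basis-deligne}(i), it suffices to compute each $\s$-component $\s(\mathbf{c}_{N,v}^{a,b})$ separately. By \eqref{identification-sigma} and Proposition \ref{basis-deligne}(ii), this amounts to pairing the $\s$-component of $r_\sD(e_N^{a,b})$ against the cycle $p_N^{ha,hb}\g_{N,\t}$, where $h\in H_N$ is determined by $\s(\xi_N)=\t(\z_N)^h$; by Proposition \ref{homology}(i) this cycle generates the $\theta_N^{ha,hb}$-isotypic piece of $E_N\ot_\Q H_1(X_{N,\t}(\C),\Q)$.

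First I would unfold $\g_N=N^{-2}\sum_{(r,s)}(1-g_N^{r,0})(1-g_N^{0,s})\d_N$ and combine with the projector, turning the pairing into a $G_N$-weighted sum of integrals of
$$\log(1-\z_N^{-r}x)\,d\log(1-\z_N^{-s}y)$$
along the base path $\d_N(t)=(t^{1/N},(1-t)^{1/N})$. The boundary contribution $\log|1-y(P)|\int d\log(1-x)$ in Beilinson's formula should drop out after the character sum, since for $(a,b)\in I_N$ the relevant boundary residues cancel. Expanding both logarithms as power series in $x$ and $y$ and invoking character orthogonality in $G_N$ leaves only monomials with $m\equiv-ha$, $n\equiv-hb\pmod N$. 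The substitution $t=x^N$ then reduces each surviving integral to an elementary Beta integral whose value produces exactly the Pochhammer coefficient $(\angle{ha}/N,m)(\angle{hb}/N,n)/(\angle{ha}/N+\angle{hb}/N+1,m+n)$; together with the Gamma-factor normalization of $\wt\om_{N,\t}^{ha,hb}$ coming from \eqref{omega-normalized}, the double series assembles into $\wt F(\tfrac{\angle{ha}}{N},\tfrac{\angle{hb}}{N})$ up to explicit constants.

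The operator $i\,\Im$ in Beilinson's formula then produces the difference $\wt F(\tfrac{\angle{ha}}{N},\tfrac{\angle{hb}}{N})-\wt F(\tfrac{\angle{-ha}}{N},\tfrac{\angle{-hb}}{N})$: complex conjugation on the integrand replaces $(\z_N^r,\z_N^s)$ by their inverses and hence the character $\theta_N^{ha,hb}$ by $\theta_N^{-ha,-hb}$, so the conjugate piece repeats the calculation with $h$ replaced by $-h$. Careful book-keeping of the normalizations---the factor $1/N^2$ from $p_N^{a,b}$, the $2\pi i$ built into $\l_{N,v}^{a,b}$, and the $1/(2i)$ implicit in $\Im$---will deliver the claimed prefactor $-1/(4N^2\pi i)$.

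The main obstacle I anticipate is the surjectivity, equivalently the non-vanishing of this difference. Because $F(\a,\b;1,1)$ lies on the boundary of the domain of convergence, the defining double series is only conditionally convergent and direct comparison is delicate. I plan to handle this via the Euler-type integral representation of Appell's $F_3$ (to be reviewed in \S 4.9), which rewrites $\wt F(\a,\b)$ as a genuinely convergent integral of a strictly positive integrand; from this representation one reads off both the convergence asserted in Definition \ref{def-F} and the inequality $\wt F(\a,\b)\neq\wt F(1-\a,1-\b)$, the degenerate case being excluded by $(a,b)\in I_N$. Granted this analytic input, the one-dimensionality of the target forces $r_{\sD,v}\ot_\Q\R$ to be surjective.
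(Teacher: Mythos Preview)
Your plan is correct and essentially matches the paper's proof: expand $\log(1-y)\,d\log(1-x)$ in a power series, recognize the $(a,b)$-isotypic piece as $\wt F(\tfrac{\angle a}{N},\tfrac{\angle b}{N})\,\wt\om_N^{a,b}$, and establish non-vanishing via the Euler integral for $F_3$ and a monotonicity argument. The only organizational difference is that the paper first reduces $\om_N^{a+Ni,b+Nj}$ to $\om_N^{a,b}$ modulo exact forms and then pairs with the closed cycle, whereas you integrate directly over $\d_N$ and let the Beta identity $(\a+\b,i+j)B(\a+i,\b+j)=(\a,i)(\b,j)B(\a,\b)$ do the same job; the paper's own remark after its lemma notes these are equivalent. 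One small refinement: rather than arguing the boundary term away by a character sum, the paper simply takes the base point $P=(0,1)$, where $1-x(P)=1$, so the second term in Proposition~\ref{formula-regulator} vanishes outright; it also disposes of the contributions from $\om_N^{a,N}$, $\om_N^{N,b}$ (exact) and $\om_N^{a,N-a}$ (real pairing) separately before taking the imaginary part, which you will need to do as well.
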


The proof will be given in the subsequent subsections. First, we give several corollaries. 

\begin{corollary}\label{corollary-1}
If $k$ contains all the $N$-th roots of unity, then the regulator map 
$$r_{\sD,v} \ot_\Q \R  \colon H_\sM^2(X_N,\Q(2))_\Z \ot_\Q \R \lra H_\sD^2(X_{N,v},\R(2))$$
is surjective for any infinite place $v$ of $k$. 
\end{corollary}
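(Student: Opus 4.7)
The plan is to reduce the statement to Theorem~\ref{main-theorem} one summand at a time, using the motivic decomposition available when $\mu_N \subset k$. First, since $\mu_N \subset k$ we have $K_N = k$ and $H_{N,k}$ is trivial, so Proposition~\ref{decomposition-X_K} gives
$$h(X_N) \;\simeq\; \mathbf{1} \oplus \mathbf{L} \oplus \bigoplus_{(a,b) \in I_N} X_N^{a,b} \qquad \text{in } \sM_{k,E_N}.$$
Applying the additive functor $H_\sD^2(-_v,\R(2))$ and noting that it kills $\mathbf{1}_v$ and $\mathbf{L}_v$ (both reduce to Deligne cohomology of a point in the relevant twists, which vanishes via \eqref{deligne-cohomology-exact}), I obtain a direct-sum decomposition
$$E_N \otimes_\Q H_\sD^2(X_{N,v},\R(2)) \;=\; \bigoplus_{(a,b) \in I_N} H_\sD^2(X_{N,v}^{a,b},\R(2)),$$
in which each summand is a free $E_{N,\R}$-module of rank $1$ by Proposition~\ref{basis-deligne}(i). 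The regulator respects this decomposition by functoriality in the category $\sM_{k,E_N}$.

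The key input is Theorem~\ref{main-theorem}, which provides, for each $(a,b) \in I_N$,
$$r_{\sD,v}(e_N^{a,b}) \;=\; \mathbf{c}_{N,v}^{a,b}\,\lambda_{N,v}^{a,b} \qquad \text{with } \mathbf{c}_{N,v}^{a,b} \in E_{N,\R}^{*}.$$
Invertibility of $\mathbf{c}_{N,v}^{a,b}$ together with the basis property of $\lambda_{N,v}^{a,b}$ already gives surjectivity of $r_{\sD,v} \otimes_\Q \R$ on each $(a,b)$-summand. Taking direct sums yields surjectivity of $r_{\sD,v} \otimes_\Q \R$ after extending scalars from $\Q$ to $E_N$; faithful flatness of $E_N/\Q$ then transfers this to surjectivity of the original $\R$-linear map $H_\sM^2(X_N,\Q(2))_\Z \otimes_\Q \R \to H_\sD^2(X_{N,v},\R(2))$.

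No serious obstacle arises once Theorem~\ref{main-theorem} is in hand: the corollary is a formal consequence of it combined with the $E_N$-motivic decomposition. The only minor bookkeeping points are that the integral part $H_\sM^2(X_N,\Q(2))_\Z$ coincides with the full motivic cohomology group (as recalled in \S4.5), and that the idempotents $p_N^{a,b}$ commute with the regulator, so the component map $r_{\sD,v}$ on $X_N^{a,b}$ is literally the restriction of $r_{\sD,v}$ on $X_N$ via these projectors.
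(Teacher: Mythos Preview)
Your proof is correct and follows essentially the same approach as the paper: tensor with $E_N$, use the motivic decomposition into the $X_N^{a,b}$, apply Theorem~\ref{main-theorem} on each summand, and descend. You have simply spelled out in more detail what the paper compresses into two sentences (in particular the vanishing of the $\mathbf{1}$ and $\mathbf{L}$ contributions and the faithful-flatness step).
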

\begin{proof}
After tensoring with $E_N$, the both sides decomposes into the cohomology groups of $X_N^{a,b}$, 
on which the regulator is surjective by Theorem \ref{main-theorem}. 
Hence the original map is surjective. 
\end{proof}

\begin{corollary}\label{corollary-2}
Let $(a,b) \in I_N$ and $v$ be an infinite place of $k$. Then the image of 
$e_N^{[a,b]_k}$ under the regulator map 
\begin{align*}
 r_{\sD,v} \colon H_\sM^2(X_N^{[a,b]_k},\Q(2))_\Z & \lra H_\sD^2(X_N^{[a,b]_k},\R(2))
\end{align*}
is non-trivial. 
\end{corollary}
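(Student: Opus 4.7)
The plan is to derive this as a formal consequence of Theorem \ref{main-theorem} by base-changing from $k$ to $K_N$. I would pick an infinite place $w$ of $K_N$ lying over $v$ and invoke the functoriality of the Beilinson regulator under the base-change functor $\vphi_{K_N/k}^*$: this yields a commutative square whose top row is $r_{\sD,v}$ applied to $X_N^{[a,b]_k}$ and whose bottom row is $r_{\sD,w}$ applied to $\vphi_{K_N/k}^* X_N^{[a,b]_k}$, with left vertical arrow $\pi_{N,K_N/k}^*$ on motivic cohomology and right vertical arrow the induced map on Deligne cohomology.

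The bottom row is then identified explicitly. After extending coefficients to $E_N$, Proposition \ref{prop-field}(i) provides an isomorphism $\vphi_{K_N/k}^* X_N^{[a,b]_k} \simeq \bigoplus_{(c,d) \in [a,b]_k} X_N^{c,d}$ in $\sM_{K_N, E_N}$, and Proposition \ref{e_N}(vi) identifies the pullback of $e_N^{[a,b]_k}$ with $\sum_{(c,d)\in[a,b]_k} e_N^{c,d}$ under this decomposition. Applying $r_{\sD,w}$ therefore produces $\sum_{(c,d)\in[a,b]_k} r_{\sD,w}(e_N^{c,d})$ inside the direct sum $\bigoplus_{(c,d)\in[a,b]_k} H_\sD^2(X_{N,w}^{c,d}, \R(2))$.

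By Theorem \ref{main-theorem}, each summand $r_{\sD,w}(e_N^{c,d})$ is non-zero in its one-dimensional component, so the entire sum is manifestly non-zero because its summands lie in distinct direct summands. Commutativity of the square then forces $r_{\sD,v}(e_N^{[a,b]_k})$ itself to be non-zero, proving the corollary. I anticipate no serious obstacle: the argument is a clean reduction to Theorem \ref{main-theorem}. The only bookkeeping point is the coefficient extension from $E_{N,k}$ to $E_N$, which preserves non-vanishing since $E_N/E_{N,k}$ is a finite (hence faithfully flat) field extension; and one should check that the functoriality diagram linking $r_{\sD,v}$ at an infinite place of $k$ with $r_{\sD,w}$ at an infinite place of $K_N$ above it commutes, which is a general property of the Beilinson regulator under base change.
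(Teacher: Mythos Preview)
Your argument is correct and close in spirit to the paper's, but it uses the other side of the adjunction. The paper first reduces to the primitive case via Proposition~\ref{prop-level}(ii) and Proposition~\ref{e_N}(iv), then invokes Proposition~\ref{prop-field}(ii) (the scalar \emph{restriction} isomorphism $\vphi_{K_N/k*}X_N^{a,b}\simeq X_N^{[a,b]_k}$) together with Proposition~\ref{e_N}(vii) to identify the regulator with a product over all places of $K_N$ above $v$. You instead use Proposition~\ref{prop-field}(i) (the scalar \emph{extension} decomposition $\vphi_{K_N/k}^*X_N^{[a,b]_k}\simeq\bigoplus_{(c,d)\in[a,b]_k}X_N^{c,d}$) and Proposition~\ref{e_N}(vi), picking a single place $w\mid v$. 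Your route has the minor advantage of not requiring the preliminary reduction to the primitive case, since neither Proposition~\ref{prop-field}(i) nor Proposition~\ref{e_N}(vi) assumes primitivity; the paper needs it because Proposition~\ref{prop-field}(ii) does. Both arguments reduce cleanly to Theorem~\ref{main-theorem}.
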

\begin{proof}
By Proposition \ref{prop-level} (ii) and Proposition \ref{e_N} (iv), 
we can assume that $(a,b)$ is primitive. 
By Proposition \ref{prop-field} (ii), 
after taking $E_N \ot_{E_{N,k}} -$, the regulator in question is identified 
with the product of the regulators of Theorem \ref{main-theorem} for the places of $K_N$ over $v$, 
under which $e_N^{[a,b]_k}$ corresponds to  
$e_N^{a,b}$ by Proposition \ref{e_N} (vii).  
\end{proof}

\begin{corollary}\label{corollary-3} 
Suppose that $N=3$, $4$ or $6$, and $k\subset\Q(\z_N)$. Then the regulator map  
\begin{align*}
r_\sD \ot_\Q \R \colon H^2_\sM(X_N,\Q(2))_\Z \ot_\Q \R \lra H^2_\sD(X_{N,\R},\R(2))
\end{align*}
is surjective.  
\end{corollary}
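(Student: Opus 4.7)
The plan is to apply the motivic decomposition of Proposition \ref{decomposition-X} to reduce to a per-summand statement, and then distinguish the two possible fields $k \subseteq \Q(\z_N)$. By Proposition \ref{decomposition-X}, $h(X_N) \simeq (\mathbf{1} \oplus \mathbf{L}) \oplus \bigoplus_{[a,b]_k \in H_{N,k}\backslash I_N} X_N^{[a,b]_k}$ in $\sM_{k,E_{N,k}}$, and both $H^2_\sM(-,\Q(2))_\Z$ and $H^2_\sD(-_\R,\R(2))$ split additively along this decomposition. A short calculation from the exact sequence \eqref{r-deligne-1} shows $H^2_\sD(\mathbf{1}_\R,\R(2)) = H^2_\sD(\mathbf{L}_\R,\R(2)) = 0$, so it suffices to prove surjectivity of $r_\sD \otimes_\Q \R$ on each summand $X_N^{[a,b]_k}$ with $[a,b]_k \in H_{N,k}\backslash I_N$.

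For $N \in \{3,4,6\}$ and $k \subseteq \Q(\z_N)$, the field $k$ is either $K_N = \Q(\z_N)$ or $\Q$. If $k = K_N$, then $k$ is imaginary quadratic and has a unique infinite place; Corollary \ref{corollary-1} applied at that place gives the surjectivity. If $k = \Q$, then $H_{N,\Q} = (\Z/N\Z)^* = H_N$, hence $E_{N,\Q} = \Q$, and $\Q$ has a unique (real) infinite place $v_\infty$. By Corollary \ref{corollary-2}, $r_{\sD,v_\infty}(e_N^{[a,b]_\Q})$ is nonzero in the real vector space $H^2_\sD(X_{N,\R}^{[a,b]_\Q},\R(2))$; it then suffices to verify that this target is exactly one-dimensional over $\R$.

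For the dimension check I would argue as follows. Since $\phi(N) = 2$ for $N \in \{3,4,6\}$, a fixed $(a,b) \in I_N$ under the nontrivial element of $H_{N,\Q}$ would force $2a \equiv 2b \equiv 0 \bmod N$, incompatible with $(a,b) \in I_N$; thus every $H_{N,\Q}$-orbit on $I_N$ has size exactly $2$. The number of orbits is therefore $|I_N|/2 = (N-1)(N-2)/2 = g(X_N)$, matching $\dim_\R H^2_\sD(X_{N,\R},\R(2)) = g(X_N)$. Since the sum of the summand dimensions equals $g(X_N)$ and each summand has dimension at least $1$ by Corollary \ref{corollary-2}, each summand has $\R$-dimension exactly $1$. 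A nonzero vector in a one-dimensional $\R$-vector space generates it, so $r_\sD \otimes \R$ is surjective on every piece, and therefore on the whole. The main subtlety lies in this final dimension count, but it reduces to the orbit-size verification above.
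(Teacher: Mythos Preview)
Your proof is correct and follows the same two-case split as the paper: for $k=K_N$ invoke Corollary~\ref{corollary-1}, for $k=\Q$ invoke Corollary~\ref{corollary-2} together with one-dimensionality of each target. The only difference is that the paper leaves the one-dimensionality implicit (it follows directly from Proposition~\ref{basis-deligne} and Proposition~\ref{prop-field}~(ii)), whereas you establish it by an orbit-counting argument; both are valid. One small point: your claim that $2a\equiv 2b\equiv 0\bmod N$ is ``incompatible with $(a,b)\in I_N$'' is true, but for $N=4,6$ you need the condition $a+b\neq 0$ (not just $a,b\neq 0$) to rule out $(2,2)$ and $(3,3)$ respectively---you might make that explicit.
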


\begin{proof}
Since $\Q(\mu_N)$ is imaginary quadratic, 
the surjectivity for $X_{N,\Q(\mu_N)}$ (resp. $X_{N,\Q}$) follows from Corollary \ref{corollary-1} 
(resp. Corollary \ref{corollary-2}). 
\end{proof}

\begin{remark} 
If $N=3$, $4$ or $6$, then
the motive $X_N^{[a,b]_\Q}$ is isomorphic to $h^1(E)$, 
where $E$ is an elliptic curve over $\Q$ with complex multiplication by the integer ring of $\Q(\mu_N)$. 
Therefore, the surjectivity was already known \cite{beilinson}, \cite{bloch}, \cite{deninger} (see also \cite{d-w}). 
\end{remark}

\subsection{Calculation of the regulators}

We calculate the regulator of $e_N^{a,b}$ and prove the formula of Theorem \ref{main-theorem}. 
First, since
$$d\log (1-x) = - \sum_{m \geq 1} x^m \frac{dx}{x}, \quad \log (1-y) = - \sum_{n \geq 1} \frac{y^n}{n},$$
we have: 
$$d\log (1-x)\log (1-y) = \sum_{m,n \geq 1} \frac 1 n x^my^n \frac{dx}{x} 
= \sum_{m,n \geq 1} \frac 1 n \om_N^{m,n+N}. $$

\begin{lemma} For $a$, $b \in \Z$, we have modulo exact forms
$$(\tfrac a N + \tfrac b N,i+j)\om_N^{a+Ni,b+Nj} \equiv (\tfrac a N,i)(\tfrac b N,j)\om_N^{a,b}.$$
\end{lemma}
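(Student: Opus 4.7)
The plan is to reduce the claim to two base recursions for $\omega_N^{a,b}$ and then iterate.

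First, I would establish two congruences modulo exact forms. The Fermat relation $x^N+y^N=1$ gives directly
$$\omega_N^{a+N,b} + \omega_N^{a,b+N} = (x^N+y^N)\, x^a y^{b-N}\tfrac{dx}{x} = \omega_N^{a,b},$$
no exactness needed. For the second, I compute the exact form $d(x^a y^b)$ on $X_N$: using $y^{N-1}dy = -x^{N-1}dx$ coming from differentiating $x^N+y^N=1$, one finds
$$d(x^a y^b) = a\,\omega_N^{a,b+N} - b\,\omega_N^{a+N,b},$$
so $a\,\omega_N^{a,b+N} \equiv b\,\omega_N^{a+N,b}$ modulo exact forms. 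Combined with the Fermat identity this yields the two base recursions
$$\omega_N^{a+N,b} \equiv \tfrac{a}{a+b}\,\omega_N^{a,b}, \qquad \omega_N^{a,b+N} \equiv \tfrac{b}{a+b}\,\omega_N^{a,b},$$
valid whenever $a+b \neq 0$.

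Next, I iterate. Applying the first recursion $i$ times with shifted exponents and rewriting the resulting product as a ratio of Pochhammer symbols gives
$$\omega_N^{a+Ni,\,b} \equiv \frac{(\tfrac{a}{N},i)}{(\tfrac{a+b}{N},i)}\,\omega_N^{a,b}.$$
Now starting from $\omega_N^{a+Ni,b}$, I apply the second recursion $j$ times (with the $a$-coordinate frozen at $a+Ni$, so the denominator factors are $\tfrac{a+b}{N}+i,\ldots,\tfrac{a+b}{N}+i+j-1$), obtaining
$$\omega_N^{a+Ni,\,b+Nj} \equiv \frac{(\tfrac{b}{N},j)}{(\tfrac{a+b}{N}+i,\,j)}\,\omega_N^{a+Ni,\,b}.$$

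Finally, multiplying the two displayed relations and using the elementary splitting
$$(\tfrac{a+b}{N},\,i+j) = (\tfrac{a+b}{N},i)\cdot(\tfrac{a+b}{N}+i,\,j)$$
produces the asserted identity. There is no serious obstacle: once the two base recursions are in hand the rest is a bookkeeping exercise with Pochhammer symbols. The only point that requires mild care is the excluded case $a+b \equiv 0 \pmod N$, where the denominator $a+b$ in the recursion could vanish; but under the standing hypothesis $(a,b)\in I_N$ and the inductive applications needed for the statement, the relevant Pochhammer factors $(\tfrac{a+b}{N}, k)$ are nonzero, so the manipulation is legitimate.
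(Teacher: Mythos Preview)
Your proof is correct and follows essentially the same route as the paper: both derive the two base recursions $(a+b)\,\omega_N^{a+N,b}\equiv a\,\omega_N^{a,b}$ and $(a+b)\,\omega_N^{a,b+N}\equiv b\,\omega_N^{a,b}$ from the exact form $d(x^ay^b)$ together with the Fermat identity $\omega_N^{a+N,b}+\omega_N^{a,b+N}=\omega_N^{a,b}$, and then iterate. The only cosmetic difference is that the paper keeps the recursions in multiplicative form (so no denominator ever appears), whereas you divide by $a+b$ first; this is why you need your closing caveat about vanishing denominators, which the paper's formulation sidesteps automatically.
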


\begin{proof}
First, since 
$$
d(x^ay^b) = ax^ay^b\frac{dx}{x} + bx^ay^b\frac{dy}{y} = ax^ay^b\frac{dx}{x} - bx^{a+N}y^{b-N}\frac{dx}{x},   
$$
we have $a \om_N^{a,b+N} \equiv b \om_N^{a+N,b}$. On the other hand, 
\begin{align*}
\om_N^{a+N,b} = x^a(1-y^N)y^{b-N}\frac{dx}{x} = \om_N^{a,b}-\om_N^{a,b+N}.   
\end{align*}
From these we obtain 
$$(a+b)\om_N^{a+N,b} \equiv a \om_N^{a,b}, \quad (a+b)\om_N^{a,b+N} \equiv b \om_N^{a,b}.$$ 
By using these formulae repeatedly, we obtain the result. 
\end{proof}

\begin{remark}
This relation reflects, and is in fact equivalent to, the relation of Beta values: 
$$(\tfrac a N+ \tfrac b N,i+j)B(\tfrac a N+i,\tfrac b N+j) = (\tfrac a N,i)(\tfrac b N,j)B(\tfrac a N, \tfrac b N),$$
which follows from the well-known relations
\begin{equation}\label{beta-gamma}
B(\a,\b)= \frac{\vG(\a)\vG(\b)}{\vG(\a+\b)}, \quad \vG(\a+1)=\a\vG(\a).
\end{equation}
\end{remark}

Using the above lemma, we obtain:  
\begin{equation*}\begin{split} 
d\log&(1-x)\log(1-y) \\
&\equiv \sum_{m,n \geq 1} \frac{1}{m+n} \, \om_N^{m,n}\\
& = \sum_{1 \leq a,b \leq N} \sum_{i,j \geq 0} \frac{1}{a+Ni+b+Nj} \, \om_N^{a+Ni,b+Nj} \\
& \equiv  \frac{1}{N} \sum_{1 \leq a,b \leq N} \sum_{i,j \geq 0} \frac{(\frac{a}{N},i)(\frac{b}{N},j)}{(\frac{a}{N}+\frac{b}{N}, i+j+1)} \, \om_N^{a,b}\\
&= \frac{1}{N^2}  \sum_{1 \leq a,b \leq N} \frac{\vG(\frac a N)\vG(\frac b N)}{\vG(\frac{a}{N}+\frac{b}{N}+1)}
 \sum_{i,j \geq 0} \frac{(\frac{a}{N},i)(\frac{b}{N},j)}{(\frac{a}{N}+\frac{b}{N}+1, i+j)} \, 
 \wt\om_N^{a,b}\\
& = \frac{1}{N^2} \sum_{1 \leq a,b \leq N} \wt F(\tfrac a N, \tfrac b N)\, \wt\om_N^{a,b} . 
\end{split}\end{equation*}

We apply Proposition \ref{formula-regulator} for $f=1-y$, $g=1-x$; note that $e_N= -\{f,g\}$. 
We can start our cycles $(1-g_N^{r,0})(1-g_N^{0,s})\d_N$ from 
$P=(0,1)$, so that the second term of the formula vanishes. 
(More precisely, we modify slightly the cycle so that it is contained in the region $|y|<1$, 
and does not pass through the singularities of $f$ and $g$.)   

Now we calculate the first term of the formula.  
First, since $\om_N^{a,N}$, $\om_N^{N,b}$ are exact forms, they have trivial periods. 
Secondly, let $a+b=N$. Then $\om_N^{a,b}$, only having logarithmic singularities along $Z_N(\C)$, 
is a well-defined element of $H^1(U_N(\C),\C)$. 
Our cycles $g_N^{r,s}\g_N$ are already defined on $U_N(\C)$, and the formula \eqref{period} holds also in this case. 
Since $\wt F(\tfrac a N, \tfrac b N) = \wt F(\tfrac b N, \tfrac a N)$, and  
$$\int_{g_N^{r,s}\g_N} \bigl(\wt\om_N^{a,b}+\wt\om_N^{b,a}\bigr) = \z_N^{a(r-s)}+\z_N^{a(s-r)}$$
is a real number, these terms do not contribute to the regulator.  

Therefore, for a cycle $\g' \in H_1(X_{N,\t}(\C),\Q)$, we obtained: 
\begin{equation}\label{regulator-step2}
\begin{split}
r_{\sD,v}(e_N)(\g') 
&= -\frac{i}{N^2} \Im \left(\sum_{(a,b)\in I_N} \wt F\bigl(\tfrac{\angle{a}}{N}, \tfrac{\angle{b}}{N}\bigr) 
\int_{\g'} \wt\om_{N,\t}^{a,b} \right)\\
&= -\frac{1}{2N^2}  \sum_{(a,b)\in I_N} \wt F\bigl(\tfrac{\angle{a}}{N}, \tfrac{\angle{b}}{N}\bigr)
\int_{\g'} \bigl(\wt\om_{N,\t}^{a,b}-\wt\om_{N,\t}^{-a,-b}\bigr)\\
&= -\frac{1}{2N^2}  \sum_{(a,b)\in I_N} \Bigl(\wt F\bigl(\tfrac{\angle{a}}{N}, \tfrac{\angle{b}}{N}\bigr)
-\wt F\bigl(\tfrac{\angle{-a}}{N}, \tfrac{\angle{-b}}{N}\bigr)\Bigr) \int_{\g'} \wt\om_{N,\t}^{a,b}. 
\end{split}\end{equation}
Apply this to $\g' = p_N^{a,b}\g_{N,\t}$.  
By the adjointness 
$$\int_{p_N^{a,b}\g_{N,\t}}\wt\om_{N,\t}^{c,d}=\int_{\g_{N,\t}} p_N^{a,b}\wt\om_{N,\t}^{c,d},$$
and Proposition \ref{basis-deligne}, we obtain the formula of Theorem \ref{main-theorem}. 
We are left to show that $\mathbf{c}_{N,v}^{a,b}$ is invertible, which will be done in the next subsection.

\begin{corollary}\label{346}
Let $N=3, 4$, or $6$, $(a,b) \in I_N$, and assume the Beilinson conjecture (Conjecture \ref{conj-beilinson}) 
for $X_N^{[a,b]_\Q} \in \sM_\Q$ . 
Then it follows
\begin{equation*}
 L(j_N^{a,b},2)\equiv \pi L^*(j_N^{a,b},0) \equiv
 \sin \tfrac{2\pi}{N} 
\Bigl(\wt F\bigl(\tfrac{\angle{a}}{N},\tfrac{\angle{b}}{N}\bigr)-\wt{F}\bigl(\tfrac{\angle{-a}}{N},\tfrac{\angle{-b}}{N}\bigr) \Bigr)  
\end{equation*}
modulo $\Q^*$.
\end{corollary}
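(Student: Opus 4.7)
The plan is to combine the functional equation (Corollary \ref{functional-equation}) with the Beilinson conjecture and Theorem \ref{main-theorem}, establishing each of the two equivalences separately.

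For the first equivalence $L(j_N^{a,b},2) \equiv \pi L^*(j_N^{a,b},0) \pmod{\Q^*}$, I will apply Corollary \ref{functional-equation}(ii) to $X_N^{a,b}$. For $N\in\{3,4,6\}$, $K_N=\Q(\mu_N)$ is imaginary quadratic, so $r_2=1$ and $\vL(X_N^{a,b},s) = L(X_N^{a,b},s)\vG_\C(s)$. A direct computation yields $\vG_\C(2)=1/(2\pi^2)$, and at $s=0$ the simple pole of $\vG_\C$ (residue $2$) is cancelled by the simple zero of $L$ from Corollary \ref{functional-equation}(iii), giving $\vL(X_N^{a,b},0) = 2L^*(X_N^{a,b},0)$. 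Remark \ref{epsilon} (and analogues for $N=4,6$) ensures the $\varepsilon$-factor $\alpha\beta^s$ is algebraic. By Theorem \ref{fermat-L}, $L(X_N^{a,b},s) = L(X_N^{[a,b]_\Q},s) = L(j_N^{a,b},s)$, and since $X_N^{[a,b]_\Q}$ is isomorphic to $h^1$ of a CM elliptic curve over $\Q$ (remark after Corollary \ref{corollary-3}) its $L$-function is $\R$-valued, so $L^*(j_N^{-a,-b},0)=L^*(j_N^{a,b},0)$. Assembling these yields the first equivalence.

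For the second equivalence $\pi L^*(j_N^{a,b},0) \equiv \sin(2\pi/N)\bigl(\wt F(\tfrac{\angle a}{N},\tfrac{\angle b}{N})-\wt F(\tfrac{\angle{-a}}{N},\tfrac{\angle{-b}}{N})\bigr) \pmod{\Q^*}$, I will apply Conjecture \ref{conj-beilinson} to $M=X_N^{[a,b]_\Q}$ with $i=1$, $m=0$, $r=2$ (so $w=-3$). By the dimension count at the end of \S4.4, both $H^2_\sM(M,\Q(2))_\Z$ and $H^2_\sD(M_\R,\R(2))$ are $1$-dimensional over $\Q$ and $\R$, so the conjecture gives $r_\sD(e_N^{[a,b]_\Q}) \equiv L^*(M^\vee,-1)\cdot b \pmod{\Q^*}$ for any $\Q$-generator $b$ of $\sB(M(2))$. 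By Remark \ref{fermat-dual} and the reality above, $L^*(M^\vee,-1)=L^*(X_N^{[-a,-b]_\Q},0) \equiv L^*(j_N^{a,b},0)$. To compute the left side, I will base-change to $K_N$: by Proposition \ref{prop-field}(i) and Proposition \ref{e_N}(vi), $e_N^{[a,b]_\Q}$ pulls back to $e_N^{a,b}+e_N^{-a,-b}$, so Theorem \ref{main-theorem} yields $r_\sD(e_N^{a,b})+r_\sD(e_N^{-a,-b}) = \mathbf{c}_{N,v}^{a,b}\l_{N,v}^{a,b}+\mathbf{c}_{N,v}^{-a,-b}\l_{N,v}^{-a,-b}$, with the displayed $\wt F$-difference appearing via the prefactor $-1/(4N^2\pi i)$.

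The final step will be to identify the $\Q$-generator $b$ in terms of the $\l_{N,v}^{a,b}$ (after base change), so as to extract the factor $\sin(2\pi/N)/\pi$. By Proposition \ref{basis-deligne}(ii), $\l_{N,v}^{a,b}$ corresponds to $2\pi i\cdot\wt\om_{N,\t}^{a,b}$, normalized by the Beta value $B(\tfrac{\angle a}{N},\tfrac{\angle b}{N})$ via \eqref{omega-normalized} and Proposition \ref{homology}(iii), while $b$ arises from $\Q$-rational classes via \eqref{r-deligne-2} applied to $M$ over $\Q$. The Gamma reflection formula $\vG(x)\vG(1-x)=\pi/\sin(\pi x)$, combined with the algebraic relations among $\vG(a/N)$ for $N\in\{3,4,6\}$, should convert the Beta factor into $\pi/\sin(2\pi/N)$ modulo $\Q^*$. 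The main obstacle will be precisely this transcendence bookkeeping: canonically identifying $b$ with an explicit real period of the CM elliptic curve $X_N^{[a,b]_\Q}$, and verifying that the resulting ratio contributes exactly $\pi/\sin(2\pi/N)$ (rather than $\pi/\sin(\pi/N)$ or another sine) for every $(a,b)\in I_N$ with $N\in\{3,4,6\}$.
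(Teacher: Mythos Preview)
Your treatment of the first equivalence is essentially the same as the paper's, which simply cites Remark~\ref{sigma-independence}, Corollary~\ref{functional-equation} and Remark~\ref{epsilon}.

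For the second equivalence, however, your proposed ``final step'' is misconceived. You plan to extract the factor $\sin(2\pi/N)/\pi$ from Beta values $B(\tfrac{\angle a}{N},\tfrac{\angle b}{N})$ via the Gamma reflection formula and ``algebraic relations among $\vG(a/N)$.'' But the $\Q$-structure $\sB(h^1(X_N^{[a,b]_\Q})(2))$ has nothing to do with Beta values: since $F^2H^1_\dR=0$, the sequence \eqref{r-deligne-2} identifies $\sB$ directly with $H^1(X_N(\C),\Q(1))^+$, i.e.\ with duals of rational homology classes. The Beta normalization in $\wt\om_N^{a,b}$ is irrelevant here; it enters Proposition~\ref{basis-deligne}(ii) only as a convenient basis for the $\s$-component, and $\l_{N,v}^{a,b}$ itself is already an $E_N$-generator of $\sB$ by Proposition~\ref{basis-deligne}(i). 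So your transcendence bookkeeping would not produce the claimed factor (indeed $B(\tfrac13,\tfrac13)$ involves $\vG(\tfrac13)^3$, which is not of the form $\pi$ times an algebraic number).

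The paper avoids base-changing to $K_N$ altogether and works directly over $\Q$. Since $F_\infty\g_N=\g_N$ and $F_\infty g_N^{r,s}\g_N=g_N^{-r,-s}\g_N$, the one-dimensional space $H_1(X_N^{[a,b]_\Q}(\C),\Q)^-$ is generated by the genuinely $\Q$-rational class
\[
\g_N^{[a,b]_\Q}:=p_N^{[a,b]_\Q}(g_N^{r,s}-g_N^{-r,-s})\g_N=(\xi_N-\xi_N^{-1})(p_N^{a,b}-p_N^{-a,-b})\g_N
\]
for any $(r,s)$ with $ar+bs=1$ (here $p_N^{[a,b]_\Q}\in\Q[G_N]$ since $E_{N,\Q}=\Q$ for $N=3,4,6$). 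Pairing with formula~\eqref{regulator-step2} immediately gives
\[
r_\sD(e_N^{[a,b]_\Q})(\g_N^{[a,b]_\Q})=-\tfrac{1}{N^2}(\xi_N-\xi_N^{-1})\bigl(\wt F(\tfrac{\angle a}{N},\tfrac{\angle b}{N})-\wt F(\tfrac{\angle{-a}}{N},\tfrac{\angle{-b}}{N})\bigr),
\]
and the factor $\sin\tfrac{2\pi}{N}$ comes out algebraically from $\xi_N-\xi_N^{-1}=2i\sin\tfrac{2\pi}{N}$, while the $\pi$ comes from the $2\pi i$ in $\Q(1)$. No Gamma identities are needed.
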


\begin{proof}
The first equivalence follows from Remark \ref{sigma-independence}, Corollary \ref{functional-equation} and Remark \ref{epsilon}.

We calculate the regulator of $e_N^{[a,b]_\Q}$. 
The target of the regulator is 
$$H^1(X_N(\C),\Q(1))^+ = \Hom\bigl(H_1(X_N(\C),\Q)^-,\Q(1)\bigr).$$
Since $F_\infty \d_N=\d_N$, we have $F_\infty g_N^{r,s}\d_N = g_N^{-r,-s}\d_N$, 
$F_\infty\g_N=\g_N$, and hence $F_\infty g_N^{r,s}\g_N=g_N^{-r,-s}\g_N$. 
By Proposition \ref{homology} (i), 
$H_1(X_N(\C),\Q)^-$ is generated by 
$$\bigl\{(g_N^{r,s}-g_N^{-r,-s})\g_N \bigm| (r,s) \in G_N\bigr\}.$$ 

Since the only non-primitive case is $N=6$, $[a,b]_\Q=[2,2]_\Q$, which reduces to $N=3$, $[a,b]_\Q=[1,1]_\Q$, we can assume that $(a,b)$ is primitive. 
Choose $(r,s)$ such that $ar+bs=1$. Then it follows that
$H_1(X_{N}^{[a,b]}(\C),\Q)^-$ 
is a one-dimensional $\Q$-module generated by
$$\g_N^{[a,b]_\Q}:= 
p_N^{[a,b]_\Q}(g_N^{r,s}-g_N^{-r,-s})\g_N = (\xi_N-\xi_N^{-1})(p_N^{a,b}-p_N^{-a,-b})\g_N.$$
Therefore, by \eqref{regulator-step2} we have  
$$r_{\sD}(e_N^{[a,b]_\Q})(\g_N^{[a,b]_\Q})=-\frac{1}{N^2}(\xi_N-\xi_N^{-1})
\Bigl(\wt F\bigl(\tfrac{\angle{a}}{N},\tfrac{\angle{b}}{N}\bigr)-\wt{F}\bigl(\tfrac{\angle{-a}}{N},\tfrac{\angle{-b}}{N}\bigr)\Bigr),$$
hence the second equivalence follows. 
\end{proof}

\begin{remark}
Some cases of the corollary are proved unconditionally (with the rational factor determined) in \cite{otsubo-comparison} 
by comparing our element $e_N^{[a,b]_\Q}$ with Bloch's element \cite{bloch} for an elliptic curve with complex multiplication.    
\end{remark}

\subsection{Hypergeometric functions and the end of the proof}

We introduce Appell's hypergeometric function $F_3$, and finish the proof of 
Theorem \ref{main-theorem}. 

First, let us recall some properties of the classical hypergeometric series of Gauss 
\begin{equation*}
F(\a,\b,\g; x) = \sum_{n\geq 0}\frac{(\a,n)(\b,n)}{(\g,n)(1,n)} x^n,  
\end{equation*}
where $\g \not\in\{0, -1, -2, \dots\}$. 

\begin{proposition}[cf. \cite{t-kimura}]\label{gauss} \ 
\begin{enumerate}
\item $F(\a,\b,\g;x)$ converges absolutely for $|x|<1$. 
\item If $|x|<1$ and $\Re(\g)>\Re(\a)>0$, then we have:  
\begin{equation*}
F(\a,\b,\g;x) = \frac{\vG(\g)}{\vG(\a)\vG(\g-\a)} \int_0^1 u^{\a-1}(1-u)^{\g-\a-1}(1-xu)^{-\b} \, du,
\end{equation*}
where the integral is taken along the segment $0 \leq u \leq 1$, and the branches are determined by 
$\arg(u)=0$, $\arg(1-u)=0$ and $|\arg(1-xu)| \leq \pi/2$. 
\item If\, $\Re(\g-\a-\b)>0$, then $F(\a,\b,\g;x)$ converges ansolutely for $|x|=1$,  and we have
\begin{equation*}
F(\a,\b,\g;1)= \frac{\vG(\g)\vG(\g-\a-\b)}{\vG(\g-\a)\vG(\g-\b)}.
\end{equation*}
As a function of $\a$, $\b$ and $\g$, $F(\a,\b,\g;1)$ is holomorphic in the domain $\Re(\g-\a-\b)>0$, 
$\g \not\in\{0, -1, -2, \dots\}$. 
\end{enumerate}
\end{proposition}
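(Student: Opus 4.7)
The plan is to treat (i)--(iii) in turn using classical techniques; all three are standard (cf.\ \cite{t-kimura}), so I will only outline the essential steps. For (i), the ratio of consecutive coefficients in the series is $(\a+n)(\b+n)/((\g+n)(n+1))$, which tends to $1$, giving radius of convergence $1$. Sharpened via Stirling, the same asymptotic reads
\begin{equation*}
\frac{(\a,n)(\b,n)}{(\g,n)(1,n)} = \frac{\vG(\g)}{\vG(\a)\vG(\b)}\, n^{\a+\b-\g-1}\bigl(1+O(n^{-1})\bigr),
\end{equation*}
so under the hypothesis $\Re(\g-\a-\b)>0$ of (iii) the series converges absolutely at $|x|=1$, and uniformly on compact subsets of the parameter domain; this simultaneously yields the holomorphy assertion in (iii), the right-hand side being manifestly holomorphic away from the poles of $\vG$.

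For (ii), assume $\Re(\g)>\Re(\a)>0$ and $|x|<1$, and expand $(1-xu)^{-\b} = \sum_{n\geq 0} \frac{(\b,n)}{n!}(xu)^n$ by the binomial series. Since $|xu|\leq|x|<1$, the series converges uniformly in $u\in[0,1]$, so term-by-term integration is legitimate. The $n$-th term equals
\begin{equation*}
\frac{(\b,n)}{n!}x^n \int_0^1 u^{\a+n-1}(1-u)^{\g-\a-1}\,du = \frac{(\b,n)}{n!}x^n\,B(\a+n,\g-\a),
\end{equation*}
and using \eqref{beta-gamma} together with $(\a,n)=\vG(\a+n)/\vG(\a)$, the sum collapses to $\frac{\vG(\a)\vG(\g-\a)}{\vG(\g)}\,F(\a,\b,\g;x)$, as required.

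For the value at $x=1$ in (iii), I first impose the auxiliary condition $\Re(\g)>\Re(\a)>0$ so that (ii) applies, and then let $x\to 1^-$ inside the integral. Since $|1-xu|^{-\Re(\b)}$ is bounded on $[0,1]$ by either $(1-u)^{-\Re(\b)}$ or $1$ according as $\Re(\b)$ is positive or not, the integrand admits an integrable majorant whose dominant factor near $u=1$ is $(1-u)^{\Re(\g-\a-\b)-1}$; dominated convergence therefore identifies the limit with the Beta integral $B(\a,\g-\a-\b)=\vG(\a)\vG(\g-\a-\b)/\vG(\g-\b)$, and multiplying by $\vG(\g)/(\vG(\a)\vG(\g-\a))$ yields the stated closed form. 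The auxiliary restriction $\Re(\a)>0$ is then removed, first by the $\a\leftrightarrow\b$ symmetry of $F$, and, if both $\Re(\a)\leq 0$ and $\Re(\b)\leq 0$, by analytic continuation in the parameters: both sides are holomorphic on the full domain by the first paragraph. The only delicate point is the dominated-convergence step at the boundary $x=1$, but the integrability of the majorant is immediate from $\Re(\a)>0$ and $\Re(\g-\a-\b)>0$, so no serious obstacle arises.
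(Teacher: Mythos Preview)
Your proof is correct and follows the standard classical route. The paper does not supply its own proof of this proposition; it is stated with a reference to \cite{t-kimura} and used as input for the analogous Proposition~\ref{F_3} on Appell's $F_3$. Your argument---ratio test and Stirling for (i) and the convergence part of (iii), Euler's term-by-term Beta integral for (ii), and dominated convergence in the integral followed by analytic continuation in the parameters for the Gauss summation---is exactly the textbook development one finds in such references, so there is nothing to compare.
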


Appell's hypergeometric series $F_3(\a,\a'\b,\b',\g;x,y)$ of two variables is defined for $\g \not\in\{0, -1, -2, \dots\}$ 
by 
\begin{equation*}
F_3(\a,\a',\b,\b',\g;x,y) = \sum_{m,n \geq 0} \frac{(\a,m)(\a',n)(\b,m)(\b',n)}{(\g,m+n)(1,m)(1,n)} x^my^n. 
\end{equation*}
This satisfies the following properties: 
\begin{proposition}\label{F_3} \ 
\begin{enumerate}
	\item $F_3(\a,\a',\b,\b',\g;x,y)$ converges absolutely for $|x|<1$, $|y|<1$. 
	\item If $\Re(\a)>0$, $\Re(\a')>0$, and  $\Re(\g-\a-\a')>0$, then we have: 
\begin{multline*}
F_{3}(\a,\a',\b,\b',\g; x,y) = \frac{\vG(\g)}{\vG(\a)\vG(\a')\vG(\g-\a-\a')} \\
 \times \iint_{\Delta}u^{\a-1}(1-xu)^{-\b}v^{\a'-1}(1-yv)^{-\b'}(1-u-v)^{\g-\a-\a'-1} du \, dv,  
\end{multline*}
where $\Delta = \{(u,v) \mid u, v, 1-u-v \geq 0\}$, and the branches of the integrands are chosen similarly as above. 	
	\item Suppose that $\Re(\g-\a-\b)>0$ and $\Re(\g-\a'-\b')>0$. Then, $F_3(\a,\a',\b,\b',\g;x,y)$ converges absolutely for $|x|=|y|=1$. As a function of $\a$, $\a'$, $\b$, $\b'$ and $\g$, $F_3(\a,\a',\b,\b',\g;1,1)$ is holomorphic in the domain: $\Re(\g-\a-\b)>0$, $\Re(\g-\a'-\b')>0$, $\g \neq 0, -1, -2, \dots$. 
\end{enumerate}
\end{proposition}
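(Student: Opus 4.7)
\textbf{Plan for Proposition \ref{F_3}.} Part (i) is a routine convergence estimate. Using Stirling's formula, the coefficient $c_{m,n} := (\a,m)(\a',n)(\b,m)(\b',n)/((\g,m+n)(1,m)(1,n))$ grows at most polynomially in $m+n$, so the series converges absolutely on any polydisc $|x|, |y| \leq r < 1$ by comparison with $\sum r^{m+n}$.

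For (ii), my plan is to mimic the proof of Proposition \ref{gauss}(ii). Starting from Dirichlet's integral
\begin{equation*}
\iint_\Delta u^{\a-1} v^{\a'-1} (1-u-v)^{\g-\a-\a'-1}\, du\, dv = \frac{\vG(\a)\vG(\a')\vG(\g-\a-\a')}{\vG(\g)},
\end{equation*}
valid under the stated hypotheses, I expand $(1-xu)^{-\b}$ and $(1-yv)^{-\b'}$ as their binomial series (uniformly convergent on $\Delta$ since $|xu|, |yv| < 1$), substitute, and interchange summation and integration by Fubini. Each shifted Dirichlet integral evaluates to $\vG(\a+m)\vG(\a'+n)\vG(\g-\a-\a')/\vG(\g+m+n)$, and after factoring out one recognizes the series defining $F_{3}$ multiplied by the prefactor.

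The heart of the proof is (iii). I would first establish convergence of the integral representation at $(x,y)=(1,1)$. The hypotheses $\Re(\a), \Re(\a'), \Re(\g-\a-\a') > 0$ take care of the singularities on $\partial \Delta$ already present for $|x|, |y| < 1$, while the new singularities from the factors $(1-u)^{-\b}$, $(1-v)^{-\b'}$ are concentrated at the two corners $(u,v)=(1,0)$ and $(0,1)$. A local change of variables near $(1,0)$, say $u=1-s$ and $v=st$ with $s,t \in [0,1]$, converts the integrand into
\begin{equation*}
s^{\g-\a-\b-1}\, t^{\a'-1}(1-t)^{\g-\a-\a'-1}\cdot (\text{bounded factor}),
\end{equation*}
so integrability at $(1,0)$ is equivalent to $\Re(\g-\a-\b) > 0$, and by symmetry $(0,1)$ gives $\Re(\g-\a'-\b') > 0$. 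Once the integral is known to converge, I handle the double series at $(1,1)$ as follows: for positive real parameters every $c_{m,n}$ is positive and the partial sums of $\sum c_{m,n}$ are dominated by the integral (apply (ii) to the truncated binomial Taylor polynomials and use monotone convergence), so the series converges; for complex parameters in a compact subset of the claimed domain, Stirling produces a uniform majorant by a convergent real-parameter analog, giving locally uniform absolute convergence. Since each partial sum is holomorphic in the parameters (the only potential poles $\g \in \{0,-1,-2,\dots\}$ are excluded), the uniform limit theorem yields holomorphy of $F_{3}(\ldots;1,1)$ in the stated domain.

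The main obstacle is the last step: a direct Stirling analysis of $c_{m,n}$ as $(m,n) \to \infty$ is delicate because the dominant contribution shifts between ``row'', ``column'', and ``diagonal'' asymptotic regimes, so the cleanest route to convergence at $(1,1)$ is to dominate the double series by the integral representation rather than to estimate it head-on.
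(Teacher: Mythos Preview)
Your plan for (i) and (ii) is standard and matches what the paper cites from the literature; nothing to add there.

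For (iii), your route is genuinely different from the paper's. The paper does not touch the integral at all: it rewrites
\[
F_3(\a,\a',\b,\b',\g;x,y)=\sum_{n\ge 0}\frac{(\a',n)(\b',n)}{(\g,n)(1,n)}\,F(\a,\b,\g+n;x)\,y^n,
\]
observes that each inner Gauss series converges absolutely at $|x|=1$ because $\Re(\g+n-\a-\b)>0$, and then bounds $\sum_{m}\bigl|(\a,m)(\b,m)/((\g+n,m)(1,m))\bigr|$ uniformly in $n$ by the elementary remark that $|\g+n+i|>|\g+i|$ for all $i\ge 0$ once $n$ is large. Absolute convergence of $F_3$ at $|x|=|y|=1$ then reduces to that of the single Gauss series $F(\a',\b',\g;y)$ at $|y|=1$, which is exactly the hypothesis $\Re(\g-\a'-\b')>0$. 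Holomorphy is handled by the Newton-series/factorial-series argument familiar from the one-variable case. This reduction to Proposition~\ref{gauss} is short and works directly on the full domain stated in (iii).

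Your approach via the integral and a Stirling majorant is workable, but note one genuine wrinkle you glossed over: the integral representation (ii) requires $\Re(\a),\Re(\a'),\Re(\g-\a-\a')>0$, and these are \emph{not} among the hypotheses of (iii). Even for positive real parameters, $\g>\a+\b$ and $\g>\a'+\b'$ do not force $\g>\a+\a'$, so the ``dominate the series by the integral'' step only gives convergence on a proper subregion. Your Stirling comparison must therefore do real work: for arbitrary complex parameters in the (iii) domain you have to manufacture a real tuple $(\tilde\a,\tilde\a',\tilde\b,\tilde\b',\tilde\g)$ lying in the overlap of the (ii) and (iii) regions that still majorizes $|c_{m,n}|$. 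This can be arranged (the asymptotics depend only on $\Re(\a+\b)$, $\Re(\a'+\b')$, $\Re(\g)$, giving you freedom to redistribute), but it is an extra step that the paper's series-in-series argument avoids entirely.
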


\begin{proof}
We only prove (iii). See \cite{appell}, \cite{t-kimura} for the other statements.  First, we have 
$$F_3(\a,\a',\b,\b',\g;x,y) = \sum_{n \geq 0} \frac{(\a',n)(\b',n)}{(\g,n)(1,n)} F(\a,\b,\g+n;x) y^n. $$
Since $\Re(\g+n-\a-\b)>0$, $F(\a,\b,\g+n;x)$ converges absolutely for $|x|=1$ by Proposition \ref{gauss}. 
Since $|\g+n|>|\g|$ for sufficiently large $n$, and then $|\g+n+i|>|\g+i|$ for any $i$, the sum
$$\sum_{m \geq 0} \left|\frac{(\a,m)(\b,m)}{(\g+n,m)(1,m)}\right|$$
is bounded independently of $n$, and the absolute convergence of $F_3$ follows from that of 
$F(\a',\b',\g;y)$ for $|y|=1$, which follows from the assumption and 
Proposition \ref{gauss}. 

The holomorphicity follows by a similar argument as in the case of one variable using the fact that 
$F_3$ is a Newton series with respect to $\a$, $a'$, $\b$ and $\b'$, and is a factorial series with respect to $\g$. 
\end{proof}

Now, consider the special case  
\begin{equation*}
F(\a,\b;x,y) := F_3(\a,\b,1,1,\a+\b+1;x,y)
= \sum_{m,n \geq 0} \frac{(\a,m)(\b,n)}{(\a+\b+1,m+n)} x^my^n. 
\end{equation*}
If $\Re(\a), \Re(\b) >0$, then by the above proposition, it converges absolutely for $|x|, |y| \leq 1$, 
and the integral representation takes the form 
$$F(\a,\b; x,y) = \frac{\vG(\a+\b+1)}{\vG(\a)\vG(\b)} 
\iint_\Delta u^{\a-1} (1-xu)^{-1} v^{\b-1} (1-yv)^{-1} du \, dv.$$
In particular, if we define (see Definition \ref{def-F})
\begin{equation*}
\wt{F}(\a,\b) = \frac{\vG(\a)\vG(\b)}{\vG(\a+\b+1)} F(\a,\b;1,1), 
\end{equation*}
then we have
\begin{equation}\label{equation-integral}
\wt{F}(\a,\b)=\iint_{\Delta} u^{\a-1}(1-u)^{-1}v^{\b-1}(1-v)^{-1} du \, dv.
\end{equation}

\begin{proposition}\label{decreasing}
Consider $\wt{F}(\a,\b)$ as a function of positive real numbers $\a$, $\b$. Then{\rm :} 
\begin{enumerate}
	\item $\wt{F}(\a,\b)$ is monotonously decreasing with respect to each parameter. 
	\item Suppose that  $0 < \a, \b <1$. Then, $\wt{F}(\a,\b) \neq \wt{F}(1-\a,1-\b)$ 
	if and only if $\a+\b\neq 1$.
\end{enumerate}	
\end{proposition}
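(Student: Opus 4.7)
My plan is to use the integral representation \eqref{equation-integral} throughout. For (i), I will differentiate under the integral sign. Formally,
\begin{equation*}
\frac{\partial \wt{F}}{\partial\alpha}(\alpha,\beta)=\iint_{\Delta}u^{\alpha-1}(\log u)(1-u)^{-1}v^{\beta-1}(1-v)^{-1}\,du\,dv,
\end{equation*}
and the differentiation is justified by dominated convergence, since near $u=0$ the factor $u^{\alpha-1}\log u$ is still integrable (bounded by $u^{\alpha/2-1}$ for small $u$), and near the only other critical spot, the corner $(u,v)=(1,0)$, one has $\log u\to 0$, so the corner analysis is no worse than the one that established convergence of $\wt{F}$ itself. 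The resulting integrand is pointwise negative on the interior of $\Delta$ (as $\log u<0$ there, everything else being strictly positive), hence $\partial_\alpha\wt{F}<0$. By the symmetry $u\leftrightarrow v$ in the integral, $\wt{F}(\alpha,\beta)=\wt{F}(\beta,\alpha)$, and the argument for $\beta$ is identical.

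For (ii), the ``if'' direction is immediate from the symmetry just noted: if $\alpha+\beta=1$ then $(1-\alpha,1-\beta)=(\beta,\alpha)$, so $\wt{F}(1-\alpha,1-\beta)=\wt{F}(\beta,\alpha)=\wt{F}(\alpha,\beta)$. For the converse, assume $\alpha+\beta\neq 1$. Swapping the roles of $(\alpha,\beta)$ and $(1-\alpha,1-\beta)$ if necessary, I may assume $\alpha+\beta<1$, which gives the two strict inequalities $\alpha<1-\beta$ and $\beta<1-\alpha$ with all four quantities in $(0,1)$. Chaining the strict monotonicity from (i) in each argument, followed by symmetry,
\begin{equation*}
\wt{F}(\alpha,\beta)>\wt{F}(1-\beta,\beta)>\wt{F}(1-\beta,1-\alpha)=\wt{F}(1-\alpha,1-\beta),
\end{equation*}
so in particular $\wt F(\alpha,\beta)\neq\wt F(1-\alpha,1-\beta)$, completing the proof.

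The only real obstacle is the analytic bookkeeping in (i): one must check that the integrand in \eqref{equation-integral} (and its $\alpha$-derivative) is genuinely integrable on $\Delta$ despite the double singularity $(1-u)^{-1}v^{\beta-1}$ near the corner $(1,0)$. This is handled by noting that on $\Delta$ one has $1-u\geq v$, so changing variables $u=1-v-w$ reduces the local analysis to $\int_0^1 v^{\beta-1}(-\log v)\,dv<\infty$. Everything else is a straightforward monotonicity-and-symmetry argument.
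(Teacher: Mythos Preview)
Your proof is correct and follows essentially the same approach as the paper. For (i) the paper simply says the monotonicity ``is immediate from \eqref{equation-integral}'' (since $u^{\alpha-1}$ decreases pointwise in $\alpha$ on $(0,1)$), whereas you spell this out via differentiation under the integral sign; for (ii) your chain $\wt F(\alpha,\beta)>\wt F(1-\beta,\beta)>\wt F(1-\beta,1-\alpha)=\wt F(1-\alpha,1-\beta)$ is the same monotonicity-plus-symmetry argument as the paper's $\wt F(\alpha,\beta)=\wt F(\beta,\alpha)>\wt F(1-\alpha,\alpha)>\wt F(1-\alpha,1-\beta)$, just with the symmetry step placed at the end rather than the beginning.
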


\begin{proof}
(i) is immediate from \ref{equation-integral}. To prove (ii), first assume that $\a+\b<1$. Then we have
$$\wt{F}(\a,\b) = \wt{F}(\b,\a) > \wt{F}(1-\a,\a) >\wt{F}(1-\a,1-\b).$$ Similarly, if $\a+\b>1$, 
then $\wt{F}(\a,\b)<\wt{F}(1-\a,1-\b)$. Finally, if $\a+\b=1$, then we have 
$\wt{F}(\a,\b)=\wt{F}(\b,\a)=\wt{F}(1-\a,1-\b)$. 
\end{proof}

Applying this proposition to $\a = \frac{\angle{ha}}{N}$, $\b = \frac{\angle{hb}}{N}$, the proof of Theorem \ref{main-theorem} is completed. 

\begin{remark}
We could also use Deligne's description of the regulator (cf. \cite{hain} \cite{ramakrishnan}), 
which uses  Chen's iterated integral.  
For a path $\g \colon [0,1] \ra X$ and differential forms $\omega$, $\eta$, we define 
$$\int_\g \om\eta := \int_0^1\left(\int_0^t \g^*\om(s)\right) \g^*\eta(t).$$
Roughly speaking, the regulator map sends $\{f,g\}$ to 
$$\g \longmapsto  i \Im\left( \int_\g d\log f \, d\log g\right). $$
In this way, one finds more directly the integral \eqref{equation-integral}. 
Note that the iterated integral is a double integral over the region $0 \leq s, t, t-s \leq 1$, which is transformed to 
$\Delta$ by $u=s$, $v=1-t$.  
\end{remark}

\subsection{Variants}

We discuss some variants which involve special values of hypergeometric functions of one variable (cf. \cite{slater})
$${}_{p}F_{q}\left({{\a_1,\cdots, \a_{p}} \atop {\b_1,\cdots, \b_{q}}};x\right)
= \sum_{n \geq 0} \frac{(\a_1,n)\cdots (\a_p,n)}{(\b_1,n)\cdots (\b_q,n)(1,n)}x^n. $$ 
It converges absolutely for $|x|<1$, and converges at $x=1$ if 
$$\sum_{j=1}^q \b_j - \sum_{i=1}^p \a_i >0.$$  
The integral representation of ${}_3F_2$ (cf. \cite{slater}, 4.1) is written as follows: 
\begin{multline*}
{}_3F_2\left({{a,b,c} \atop {d,e}};x\right) 
= \frac{\vG(d)\vG(e)}{\vG(a)\vG(d-a)\vG(c)\vG(e-c)} \times \\ 
\iint_{\Delta} u^{a-1}(1-xu)^{-b}(1-u(1-v)^{-1})^{d-a-1}v^{e-c-1}(1-v)^{c-a-1} du \, dv. 
\end{multline*}
By comparing with Proposition \ref{F_3} (ii), we obtain 
\begin{multline*}
F_3(\a,\a',\b,\b',\a+\a'+1; x,1) \\
= \frac{\vG(\a+\a'+1)\vG(\a-\b'+1)}{\vG(\a+1)\vG(\a+\a'-\b'+1)}
{}_3F_2\left({{\a,\b,\a-\b'+1} \atop {\a+1,\a+\a'-\b'+1}};x \right). 
\end{multline*}
In particular, we have
$$\wt{F}(\a,\b)
= \frac{1}{\a} \frac{\vG(\a)\vG(\b)}{\vG(\a+\b)} {}_3F_2\left({{\a,\a,1} \atop {\a+1,\a+\b}}; 1\right) . 
$$
By using Dixon's formula (cf. \cite{slater}, 2.3.3) 
$${}_3F_2\left({{a,b,c} \atop {d,e}};1\right)
= \frac{\vG(d)\vG(e)\vG(s)}{\vG(a)\vG(b+s)\vG(c+s)}
{}_3F_2\left({{d-a,f-a,s} \atop {b+s,c+s}};1\right),$$
where $s=d+e-a-b-c$, repeatedly, we obtain three other expressions.  
In particular, we have 
\begin{equation}\label{Fand3F2}
\wt{F}(\a,\b) 
=  \left(\frac{\vG(\a)\vG(\b)}{\vG(\a+\b)}\right)^2 
{}_3F_2\left({{\a,\b,\a+\b-1} \atop {\a+\b,\a+\b}};1\right), 
\end{equation}
which is symmetric and has better convergence. 

On the other hand, 
Ross \cite{ross-cr} and Kimura \cite{k-kimura} also studied the element
\begin{equation*}
\{1-xy,x\} = -\{1-xy,y\} \ \in H_\sM^2(X_N,\Q(2))_\Z
\end{equation*}
(the tame symbols vanish). 
We explain that its study is in fact equivalent to the study of $e_N^{[1,1]_k}$, or equivalently, of $e_N^{1,1}$. 
Let the curve $C_N^{1,1}$ and the morphism of degree $N$ 
$$\psi\colon X_N \lra C_N^{1,1}$$ 
be as in Remark \ref{C_N}. 
The automorphism group (over $K_N$) of $X_N/C_N^{1,1}$ is $\bigl\{g_N^{r,-r} \bigm| r \in \Z/N\Z\bigr\}$. 
One sees easily 
$$\{1-xy,x\} = \frac 1 N \psi^*\{1-v,u\}.$$
As Yasuda pointed out to the author, we can prove that 
\begin{equation}\label{yasuda}
\psi_* e_N = 3\{1-v,u\} = 3 \psi_*\{1-xy,x\}
\end{equation}
in $H_\sM^2(C_N^{1,1},\Q(2))_\Z$.
Therefore, we have
$$3\{1-xy,x\}= \frac{1}{N} \psi^*\psi_* e_N = \frac{1}{N} \left(\sum_{r\in\Z/N\Z}\nolimits g_N^{r,-r}\right) e_N.$$
Since 
$$ \sum_r g_N^{r,-r} p_N^{a,b} = \begin{cases}
Np_N^{a,b} & \text{if $a=b$},\\
0 & \text{otherwise}, 
\end{cases}
$$
we obtain 
\begin{equation}\label{1-xy,x}
3\{1-xy,x\} = \left(\sum_{a\in \Z/N\Z}\nolimits p_N^{a,a} \right)e_N.
\end{equation}
In particular, the regulator of $\{1-xy,x\}$ is calculated 
by the regulators of $e_N^{a,a}$, which then reduces to the study of $e_{N'}^{1,1}$ for some $N'|N$.  

If we apply a similar calculation as for $e_N$ to $\{1-xy,x\}$, 
we obtain similar results as Theorem \ref{main-theorem} and its corollaries for $X_N^{a,a}$ and $X_N^{[1,1]_k}$. 
Then we encounter with another generalized hypergeometric function of one variable,  
namely, 
\begin{equation*}
G(\a,\b,\g;x) := \sum_{n \geq 0} \frac{(\a,n)(\b,n)}{(\g,2n)} x^n 
\end{equation*}
and its special value 
\begin{equation*}
\wt G(\a,\b) := \frac{\vG(\a)\vG(\b)}{\vG(\a+\b+1)} G(\a,\b,\a+\b+1;1).
\end{equation*}
In fact, it is again a special case of ${}_3F_2$: 
\begin{equation}\label{Gand3F2}
G(\a,\b,\g;x) = {}_3F_2\left({{\a,\b,1} \atop {\frac{\g}{2}, \frac{\g+1}{2}}}; \frac{x}{4}\right). 
\end{equation}
We remark that it converges for $|x|<4$, 
and $x=1$ is not on the boundary.  
By \eqref{1-xy,x} and the comparison of the regulators, we obtain: 
\begin{equation*}
\wt F\bigl(\tfrac{\angle{a}}{N}, \tfrac{\angle{a}}{N}\bigr)-\wt F\bigl(\tfrac{\angle{-a}}{N}, \tfrac{\angle{-a}}{N}\bigr)
 = 3\left(\wt G\bigl(\tfrac{\angle{a}}{N}, \tfrac{\angle{a}}{N}\bigr)-\wt G\bigl(\tfrac{\angle{-a}}{N}, \tfrac{\angle{-a}}{N}\bigr)\right)
\end{equation*}
for any $a \neq 0$. It follows that 
$$\wt F(\a,\a)-\wt F(1-\a,1-\a) = 3 \left(\wt G(\a,\a)-\wt G(1-\a,1-\a)\right)$$
for any $\a \in \C$ with $0<\Re(\a)<1$, for the both sides are holomorphic with respect to $\a$. 
It seems  that 
\begin{equation}\label{FandG}
\wt F(\a,\a) = 3 \wt G(\a,\a)
\end{equation} 
for any $\a\in\C$ with $\Re(\a)>0$, but
$\wt F(\a,\b) \neq 3 \wt G(\a,\b)$ in general. 
By \eqref{Fand3F2} and \eqref{Gand3F2}, \eqref{FandG} is equivalent to: 
$$\frac{\vG(\a)^2}{\vG(2\a)} \, {}_3F_2\left({{\a,\a,2\a-1} \atop {2\a,2\a}};1\right)
= \frac{3}{2\a}\, {}_3F_2\left({{\a,\a,1} \atop {\a+\frac{1}{2},\a+1}};\frac{1}{4}\right).$$
The author does not know if such a relation is known to the experts. 

\begin{remark}
We could also study the element $\{1-x^ry^s,x\}$, though its tame symbols 
do not vanish in general. 
Then, the hypergeometric function involved should be  
$$\sum_{n \geq 0} \frac{(\a,rn)(\b,sn)}{(\g,(r+s)n)}x^n, \quad \g=\a+\b+1,$$
which is also written as  
${}_{p}F_{q}\left({{\a_1,\cdots, \a_{p}} \atop {\b_1,\cdots, \b_{q}}};\frac{x}{R}\right)$
with $p=q+1=r+s+1$, suitable $\a_i$, $\b_j$ and $R>1$. 
\end{remark}

\subsection{Action of the symmetric group}

The Fermat curve has another symmetry, namely, the action of the symmetric group.   
Using this, we construct more elements in motivic cohomology.

Let us suppose for simplicity that $N$ is odd, so that the equation \eqref{equation-fermat} of $X_N$ is written as: 
$$x_0^N+y_0^N+(-z_0)^N=0.$$
The symmetric group $S_3$ of degree $3$ acts on $X_N$ as permutations on the set 
$\{x_0,y_0,-z_0\}$. 
Since  
$$(1\ 2)^*e_N = \{1-y,1-x\} = -e_N,$$ 
we do not get a new element by $(1\ 2)$. 
The quotient of $S_3$ by the subgroup generated by $(1 \ 2)$ is represented by 
$(1)$, $(1\ 3)$ and $(2\ 3)$. 

For $(a,b) \in G_N$, put 
\begin{equation*}
c=-a-b \in \Z/N\Z,
\end{equation*} 
and use redundant notations with index $(a,b,c)$ instead of $(a,b)$, such as
$p_N^{a,b,c} = p_N^{a,b}$, $e_N^{a,b,c}=e_N^{a,b}$. 

\begin{definition}
Define elements of $H_\sM^2(X_N^{a,b},\Q(2))_\Z$ by 
$$e_{(1)}^{a,b,c}= e_N^{a,b,c}, \quad 
e_{(1\ 3)}^{a,b,c}= p_N^{a,b,c}(1\ 3)^*e_{N,K_N}, \quad 
e_{(2\ 3)}^{a,b,c}= p_N^{a,b,c}(2 \ 3)^*e_{N,K_N}, 
$$
where we put $e_{N,K_N}=\pi_{K_N/k}^*e_N$. 
\end{definition}

\begin{lemma}\label{projector-twist}  
In $\End_{\sM_{K_N,E_N}}(X_{N,K_N})$, we have
$$p_N^{a,b,c} \circ (1\ 3)^* = (1\ 3)^* \circ  p_N^{c,b,a}, \quad 
p_N^{a,b,c} \circ (2\ 3)^* = (2\ 3)^* \circ  p_N^{a,c,b}.$$ 
\end{lemma}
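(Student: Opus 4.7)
The plan is to exploit the extra $S_3$-symmetry visible when $N$ is odd. Writing the Fermat equation symmetrically as $x_0^N + y_0^N + (-z_0)^N = 0$, the torus $T := \mu_N^3/\mu_N$ (quotient by the diagonal $\mu_N$) acts on $X_{N,K_N}$ by componentwise scaling on the triple $(x_0, y_0, -z_0)$, and $G_N$ embeds as the stabilizer of the third factor via $g_N^{r,s} \mapsto [(\z_N^r, \z_N^s, 1)]$. Correspondingly, $\theta_N^{a,b}$ extends to $\theta_N^{a,b,c}(\z_N^r, \z_N^s, \z_N^t) = \xi_N^{ar+bs+ct}$ on $T$, well-defined precisely because $a+b+c \equiv 0 \pmod{N}$. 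Under the $S_3$-action on $T$ by permutation of factors, $(1\ 3)$ pulls $\theta_N^{a,b,c}$ back to $\theta_N^{c,b,a}$ and $(2\ 3)$ pulls it back to $\theta_N^{a,c,b}$; this already predicts the index permutations in the lemma.

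Next I would compute the conjugation in $T$ explicitly. A direct check in homogeneous coordinates gives
\[ (1\ 3) \circ g_N^{r,s} = g_N^{-r,\, s-r} \circ (1\ 3), \qquad (2\ 3) \circ g_N^{r,s} = g_N^{r-s,\, -s} \circ (2\ 3) \]
as automorphisms of $X_{N,K_N}$. Since composition of correspondences reverses morphism composition ($\vG_f \circ \vG_g = \vG_{g\circ f}$), translating these into $\End_{\sM_{K_N,E_N}}(h(X_{N,K_N}))$ yields
\[ g_N^{r,s} \circ (1\ 3)^* = (1\ 3)^* \circ g_N^{-r,\, s-r}, \qquad g_N^{r,s} \circ (2\ 3)^* = (2\ 3)^* \circ g_N^{r-s,\, -s}. \]

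To conclude, I would expand the projector, commute $(1\ 3)^*$ past it, and change variables:
\[ p_N^{a,b,c} \circ (1\ 3)^* = (1\ 3)^* \circ \frac{1}{N^2} \sum_{(r,s) \in G_N} \xi_N^{-ar-bs}\, g_N^{-r,\, s-r}. \]
Setting $(r',s') := (-r,\, s-r)$, which is a bijection of $G_N$, and using $a+b \equiv -c$, the coefficient of $g_N^{r',s'}$ becomes $\xi_N^{-c r' - b s'} = \theta_N^{c,b,a}((g_N^{r',s'})^{-1})$, so the inner sum is exactly $N^2 p_N^{c,b,a}$. The case of $(2\ 3)$ is identical with the substitution $(r,s) \mapsto (r-s,\, -s)$ and the relation $a+c \equiv -b$, giving $p_N^{a,c,b}$.

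The only real content is the character identity after the change of variables, which is forced by $a+b+c \equiv 0$ --- the same condition that makes the triple-index formalism well-defined on $X_N$. Everything else is routine bookkeeping with the sign conventions, and I do not anticipate any substantive obstacle.
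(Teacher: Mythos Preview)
Your proof is correct and follows essentially the same route as the paper: compute the conjugation $(1\ 3)\circ g_N^{r,s}=g_N^{-r,s-r}\circ(1\ 3)$ on the scheme, translate to correspondences via $\vG_f\circ\vG_g=\vG_{g\circ f}$, then change variables in the sum defining $p_N^{a,b}$ using $a+b\equiv -c$. The torus $T=\mu_N^3/\mu_N$ framing you add is a pleasant conceptual explanation for why the triple-index notation behaves well under $S_3$, but the paper simply carries out the same direct computation without it.
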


\begin{proof} 
Since $ (1\ 3)\circ g_N^{r,s} = g_N^{-r,-r+s}\circ  (1\ 3)$ in $\sV_k$, we have 
\begin{multline*}
N^2 p_N^{a,b,c} \circ (1\ 3)^* 
=  \sum_{r,s} \theta_N^{a,b}(g_N^{r,s})^{-1} g_N^{r,s} \circ  (1\ 3)^* \\
=  (1\ 3)^* \circ \sum_{r,s} \theta_N^{a,b}(g^{r,s})^{-1} g_N^{-r,-r+s} 
=  (1\ 3)^* \circ \sum_{r',s'} \theta_N^{a,b}(g_N^{-r',-r'+s'})^{-1} g_N^{r',s'} \\ 
=   (1\ 3)^* \circ \sum_{r',s'} \theta_N^{-a-b,b}(g^{r',s'})^{-1} g_N^{r',s'}  
= N^2  (1\ 3)^* \circ p_N^{c,b,a}.
\end{multline*}
The other one is parallel. 
\end{proof}

Put \ $\eta_N=e^{\frac{2\pi i}{N}} \in \C^*$, 
and a polynomial
\begin{equation*}
\Phi_N(T) = T^{\frac{N-1}{2}}-T^{\frac{1-N}{2}}.
\end{equation*}
Obviously, $\Phi_N(\eta_N^a) \in i\R$ and  
$\Phi_N(\eta_N^{-a})=-\Phi_N(\eta_N^a)$. 
\begin{lemma}\label{regulator-twist}
 Let $(a,b) \in I_N$ and $\wt\om_N^{a,b,c} = \wt\om_N^{a,b} \in H^1(X_N(\C),\C)$ be as defined in \eqref{omega-normalized}. 
 Then we have{\rm :} 
\begin{equation*}
(1\ 3)^* \wt\om_N^{a,b,c} = -\frac{\Phi_N(\eta_N^a)}{\Phi_N(\eta_N^c)}\, \wt\om_N^{c,b,a}, \quad
(2\ 3)^* \wt\om_N^{a,b,c} = -\frac{\Phi_N(\eta_N^b)}{\Phi_N(\eta_N^c)}\, \wt\om_N^{a,c,b}. 
\end{equation*}
\end{lemma}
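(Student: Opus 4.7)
The plan is to compute each pullback of $\om_N^{a,b}$ as a rational one-form, reduce it to a canonical form via the recurrence preceding Proposition~4.14, and then match the resulting Beta-function ratio with $\Phi_N(\eta_N^a)/\Phi_N(\eta_N^c)$ using the reflection formula for $\vG$.

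First, from $(1\ 3)\circ g_N^{r,s}=g_N^{-r,\,s-r}\circ(1\ 3)$ in $\sV_k$ (verified exactly as in Lemma~\ref{projector-twist}), the pullback $(1\ 3)^*$ sends the $\theta_N^{a,b}$-eigenspace of $H^1(X_N(\C),\C)$ under the $G_N$-action into the $\theta_N^{c,b}$-eigenspace. Since each such eigenspace indexed by $I_N$ is one-dimensional by Proposition~\ref{homology}(ii), and $(c,b)\in I_N$ whenever $(a,b)\in I_N$, we obtain $(1\ 3)^*\wt\om_N^{a,b,c}=\lambda_{a,b}\,\wt\om_N^{c,b,a}$ for a unique scalar $\lambda_{a,b}$, and analogously for $(2\ 3)$; it remains to identify these scalars.

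Second, since $N$ is odd, $(1\ 3)$ acts on affine coordinates by $(x,y)\mapsto(1/x,\,-y/x)$, and a direct substitution in $\om_N^{a,b}=x^ay^{b-N}\,dx/x$ yields
$$(1\ 3)^*\om_N^{a,b}=(-1)^b\,x^{N-a-b}\,y^{b-N}\,\frac{dx}{x}.$$
If $\angle a+\angle b<N$, the right-hand side is already $(-1)^b\om_N^{c,b}$; if $\angle a+\angle b>N$, one application of the recurrence $\om_N^{e+N,f}\equiv\tfrac{e}{e+f}\om_N^{e,f}$ modulo exact forms brings it to that form with an extra factor $(N-a)/(N-a-b)$. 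Combining $B(\alpha,\beta)=\vG(\alpha)\vG(\beta)/\vG(\alpha+\beta)$ with the shift $\vG(2-z)=(1-z)\vG(1-z)$, this extra factor cancels precisely against the corresponding shift in the Beta-function ratio, so in both cases one obtains the uniform identity
$$(1\ 3)^*\om_N^{a,b}\equiv(-1)^b\cdot\frac{1}{N}\cdot\frac{\vG(1-(a+b)/N)\,\vG(b/N)}{\vG(1-a/N)}\,\wt\om_N^{c,b,a}.$$

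Multiplying through by $N/B(a/N,b/N)$ converts the left-hand side to $(1\ 3)^*\wt\om_N^{a,b,c}$; after cancelling the common $\vG(b/N)$ factor and applying $\vG(z)\vG(1-z)=\pi/\sin(\pi z)$ twice, one arrives at
$$(1\ 3)^*\wt\om_N^{a,b,c}=(-1)^b\,\frac{\sin(\pi a/N)}{\sin(\pi(a+b)/N)}\,\wt\om_N^{c,b,a}.$$
A short calculation using $N$ odd gives $\Phi_N(\eta_N^m)=-2i(-1)^m\sin(\pi m/N)$; substituting $c\equiv-(a+b)\pmod N$ then verifies that this coefficient agrees with $-\Phi_N(\eta_N^a)/\Phi_N(\eta_N^c)$, as claimed. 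The formula for $(2\ 3)$ is obtained by the parallel calculation using $(2\ 3)\colon(x,y)\mapsto(-x/y,\,1/y)$ together with the dual expression $\om_N^{a,b}=-x^{a-N}y^b\,dy/y$. The chief technical nuisance is the two-case reduction to canonical form when $N-a-b$ has the ``wrong'' sign, but the compensating factor cancels cleanly, and once the uniform Gamma-function expression is in hand the reflection identity delivers the $\sin$-ratio — hence the $\Phi_N$-ratio — immediately.
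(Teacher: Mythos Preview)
Your argument is correct and follows the same overall strategy as the paper: compute $(1\ 3)^*\om_N^{a,b}$ directly in affine coordinates, translate the resulting Beta-ratio into a sine-ratio via the reflection formula, and identify this with $-\Phi_N(\eta_N^a)/\Phi_N(\eta_N^c)$ using $\Phi_N(\eta_N^m)=-2i(-1)^m\sin(\pi m/N)$.

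The one genuine difference is how you handle the case $\langle a\rangle+\langle b\rangle>N$. The paper does not touch the form in that case at all: it observes that $c_\infty\wt\om_N^{a,b}=\wt\om_N^{-a,-b}$ and that $(1\ 3)^*$ commutes with $c_\infty$, so the second case follows immediately from the first by replacing $(a,b,c)$ with $(-a,-b,-c)$ and noting $\Phi_N(\eta_N^{-m})=-\Phi_N(\eta_N^m)$. You instead reduce $x^{\langle c\rangle-N}y^{\langle b\rangle-N}\,dx/x$ to $\om_N^{c,b}$ modulo exact forms using the recurrence from \S4.8, and then check that the extra rational factor $(N-\langle a\rangle)/(N-\langle a\rangle-\langle b\rangle)$ is absorbed by the shift $\vG(2-z)=(1-z)\vG(1-z)$ in the Beta-ratio, yielding the single formula
\[
(1\ 3)^*\wt\om_N^{a,b}=(-1)^{\langle b\rangle}\,\frac{\sin(\pi\langle a\rangle/N)}{\sin(\pi(\langle a\rangle+\langle b\rangle)/N)}\,\wt\om_N^{c,b}
\]
valid in both cases. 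This is a little more laborious but has the virtue of being entirely algebraic in the differential forms, without invoking the complex conjugation on cohomology; the paper's reduction is shorter but relies on that extra structural fact.
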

\begin{proof} 
We only prove the first one.
First, assume that $\angle{a}+\angle{b} <N$, i.e. $\angle{a}+\angle{b}+\angle{c}=N$. Then we have 
\begin{align*}
(1\ 3)^*\om_N^{a,b} &= \left(\frac{1}{x}\right)^{\angle{a}} \left(-\frac{y}{x}\right)^{\angle{b}-N}\left(-\frac{dx}{x}\right) \\
& = (-1)^{\angle{b}} x^{N-\angle{a}-\angle{b}}y^{\angle{b}-N}\frac{dx}{x}
= (-1)^{\angle{b}} \om_N^{c,b}. 
\end{align*}
By  \eqref{beta-gamma} and the well-known relation
$$\vG(\a)\vG(1-\a) = \frac{\pi}{\sin \pi \a},$$ 
we have
\begin{equation*}
\frac{B\bigl(\tfrac{\angle{c}}{N}, \tfrac{\angle{b}}{N}\bigr)}{B\bigl(\tfrac{\angle{a}}{N}, \tfrac{\angle{b}}{N}\bigr)} 
= \frac{\vG\bigl(\frac{\angle{a}+\angle{b}}{N}\bigr)}{\vG\bigl(\frac{\angle{a}}{N}\bigr)\vG\bigl(\frac{\angle{b}}{N}\bigr)}
\frac{\vG\bigl(\frac{\angle{c}}{N}\bigr)\vG\bigl(\frac{\angle{b}}{N}\bigr)}
{\vG\bigl(\frac{\angle{c}+\angle{b}}{N}\bigr)}
= \frac{\vG\bigl(1-\frac{\angle{c}}{N}\bigr)\vG\bigl(\frac{\angle{c}}{N}\bigr)}{\vG\bigl(\frac{\angle{a}}{N}\bigr)\vG\bigl(1-\frac{\angle{a}}{N}\bigr)}
=\frac{\sin\frac{\angle{a}}{N}\pi}{\sin\frac{\angle{c}}{N}\pi}. 
\end{equation*}
Since
$(-1)^{\angle{a}}\sin\tfrac{\angle{a}}{N}\pi = - \Im \bigl(\eta_N^{\frac{N-1}{2}a}\bigr),$
we obtain the formula. 
The case $\angle{a}+\angle{b} >N$ is reduced to the first case using
\begin{align*}
c_\infty(1\ 3)^*\wt\om_N^{a,b} &= (1\ 3)^* c_\infty\wt\om_N^{a,b} = (1\ 3)^* \wt\om_N^{-a,-b}, 
\quad 
 c_\infty\wt\om_N^{c,b}=\wt\om_N^{-c,-b}.  
\end{align*}
\end{proof}

\begin{definition} For $a$, $b \in \Z/N\Z$, and an infinite place $v$ of $K_N$, 
define $\mathbf{r}_{N,v}^{a/b} \in E_{N,\R}^*$ by
$$
\s (\mathbf{r}_{N,v}^{a/b}) = - \Phi_N(\eta_N^{ha})/\Phi_N(\eta_N^{hb}) 
$$
for each $\s \colon E_N \hookrightarrow \C$, 
where $h \in H_N$ is such that $\t(\z_N)^h=\s(\xi_N)$ for $\t\colon K_N \hookrightarrow \C$ inducing $v$. 
It does not depend on the choice of $\t$.  
\end{definition}

\begin{proposition}\label{twist-regulator}
Let $N$ be odd and the notations be as in Theorem \ref{main-theorem}. 
Then we have 
\begin{align*}
r_{\sD,v} (e_{(1\ 3)}^{a,b,c}) 
= \mathbf{r}_{N,v}^{c/a} \mathbf{c}_{N,v}^{c,b,a}  \l_{N,v}^{a,b,c}, \quad
r_{\sD,v} (e_{(2\ 3)}^{a,b,c}) 
=\mathbf{r}_{N,v}^{c/b} \mathbf{c}_{N,v}^{a,c,b} \l_{N,v}^{a,b,c}. 
\end{align*}
\end{proposition}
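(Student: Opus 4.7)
The plan is to reduce the two cases to Theorem \ref{main-theorem} via the naturality of the regulator, using Lemmas \ref{projector-twist} and \ref{regulator-twist} to track how the indices get permuted and how the resulting scalar factor appears.

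First, I would rewrite the elements so that the projector sits on the correct side. By Lemma \ref{projector-twist},
$$e_{(1\ 3)}^{a,b,c} = p_N^{a,b,c}(1\ 3)^* e_{N,K_N} = (1\ 3)^* p_N^{c,b,a} e_{N,K_N} = (1\ 3)^* e_N^{c,b,a},$$
and similarly $e_{(2\ 3)}^{a,b,c} = (2\ 3)^* e_N^{a,c,b}$. In this form, $(1\ 3)^*$ defines a morphism $X_N^{c,b,a} \to X_N^{a,b,c}$ in $\sM_{K_N,E_N}$ (and likewise for $(2\ 3)^*$), so the functoriality of $r_{\sD,v}$ together with Theorem \ref{main-theorem} gives
$$r_{\sD,v}(e_{(1\ 3)}^{a,b,c}) = (1\ 3)^* r_{\sD,v}(e_N^{c,b,a}) = \mathbf{c}_{N,v}^{c,b,a}\,(1\ 3)^*\l_{N,v}^{c,b,a}.$$

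Next I would compute the class $(1\ 3)^*\l_{N,v}^{c,b,a}$ component-by-component under the identification \eqref{identification-sigma}. Fix $\s\colon E_N\hookrightarrow\C$ and the corresponding $h\in H_N$. By Proposition \ref{basis-deligne}(ii), $(\l_{N,v}^{c,b,a})_\s = 2\pi i\cdot \wt\om_{N,\t}^{hc,hb,ha}$. Applying Lemma \ref{regulator-twist} with the indices $(hc,hb,ha)$ in the roles of $(a,b,c)$ yields
$$(1\ 3)^* \wt\om_{N,\t}^{hc,hb,ha} = -\frac{\Phi_N(\eta_N^{hc})}{\Phi_N(\eta_N^{ha})}\, \wt\om_{N,\t}^{ha,hb,hc},$$
so that $\bigl((1\ 3)^*\l_{N,v}^{c,b,a}\bigr)_\s = -\dfrac{\Phi_N(\eta_N^{hc})}{\Phi_N(\eta_N^{ha})}\cdot (\l_{N,v}^{a,b,c})_\s$. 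Since this holds for every embedding $\s$, by the very definition of $\mathbf{r}_{N,v}^{c/a}$ we obtain the identity $(1\ 3)^*\l_{N,v}^{c,b,a} = \mathbf{r}_{N,v}^{c/a}\,\l_{N,v}^{a,b,c}$ in $H_\sD^2(X_{N,v}^{a,b,c},\R(2))$, and the first formula follows. The argument for $(2\ 3)$ is entirely parallel, using the second half of each of Lemmas \ref{projector-twist} and \ref{regulator-twist}.

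I expect the main obstacle to be purely bookkeeping: verifying that the cascade of permutations of the triple $(a,b,c)$ through the projector identity, the identification of $\l_{N,v}^{a,b,c}$ with $2\pi i\cdot\wt\om_{N,\t}^{ha,hb,hc}$, and the $h$-twist implicit in the passage between $\t$ and $\s$ all combine to produce exactly the ratio $-\Phi_N(\eta_N^{hc})/\Phi_N(\eta_N^{ha})$ (and not its reciprocal or complex conjugate). The fact that $\mathbf{r}_{N,v}^{c/a}$ is independent of the chosen $\t$ is built into Lemma \ref{regulator-twist} (since $\Phi_N(\eta_N^a)\in i\R$ and $\Phi_N(\eta_N^{-a})=-\Phi_N(\eta_N^{a})$, the ratio is invariant under $h\mapsto -h$), so no genuine difficulty arises there; it is really the index-matching across the two lemmas and Proposition \ref{basis-deligne} that requires care.
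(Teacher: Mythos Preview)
Your proposal is correct and follows exactly the paper's approach: rewrite $e_{(1\ 3)}^{a,b,c}=(1\ 3)^* e_N^{c,b,a}$ via Lemma \ref{projector-twist}, apply functoriality of $r_{\sD,v}$ together with Theorem \ref{main-theorem}, and then identify $(1\ 3)^*\l_{N,v}^{c,b,a}$ using Proposition \ref{basis-deligne}(ii) and Lemma \ref{regulator-twist}. Your version simply unpacks the last step in more detail than the paper's one-line appeal to those two results.
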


\begin{proof}
We only prove the first one. 
By Lemma \ref{projector-twist} and Theorem \ref{main-theorem}, we have
\begin{multline*}
r_{\sD,v}(p_N^{a,b,c}(1\ 3)^*e_{N,K_N})=r_{\sD,v}((1\ 3)^*p_N^{c,b,a}e_{N,K_N})\\
= (1\ 3)^* r_{\sD,v}(p_N^{c,b,a}e_{N,K_N})
= \mathbf{c}_{N,v}^{c,b,a} (1\ 3)^*\l_v^{c,b,a} = \mathbf{c}_{N,v}^{c,b,a} \mathbf{r}_{N,v}^{c/a} \l_v^{a,b,c}, 
\end{multline*}
where the last equality follows from 
Proposition \ref{basis-deligne} and Lemma \ref{regulator-twist}. 

\end{proof}

\subsection{Examples}
Now we study two particular cases $N=5$ and $7$ with $k=\Q$. Then, for any $(a,b) \in I_N$,  
$$\dim_{E_N} H_\sM^2(X_N^{a,b},\Q(2))_\Z = \dim_\Q H_\sM^2(X_N^{[a,b]_\Q},\Q(2))_\Z$$ 
is conjectured to be $2$ and $3$, respectively. 

For brevity, we put for $(a,b) \in I_N$
\begin{equation*}
F_N^{a,b,c}
=\wt{F}\bigl(\tfrac{\angle{a}}{N},\tfrac{\angle{b}}{N}\bigr)-\wt F\bigl(\tfrac{\angle{-a}}{N},\tfrac{\angle{-b}}{N}\bigr) \ \in \R. 
\end{equation*} 
Note that 
\begin{equation*}\label{F^{a,b,c}}
F_N^{a,b,c}=-F_N^{-a,-b,-c}, \quad F_N^{a,b,c}=F_N^{b,a,c}.  
\end{equation*}
By Proposition \ref{decreasing}, $F_N^{a,b,c}>0$ if and only if $\angle{a}+\angle{b} <N$.  
Moreover, $F_N^{a,b,c}$ is monotonously decreasing with respect to each $\angle{a}$ and $\angle{b}$. 

\begin{theorem}\label{N=5}
Suppose that $k\subset \Q(\mu_5)$. 
Then the regulator map
\begin{align*}
r_\sD \ot_\Q \R\colon H_\sM^2(X_5,\Q(2))_\Z \ot_\Q \R \lra H_\sD^2(X_{5,\R},\R(2))
\end{align*}
is surjective.  
\end{theorem}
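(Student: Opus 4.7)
The case $k=\Q(\mu_5)$ is Corollary \ref{corollary-1}, so assume $k\in\{\Q,\Q(\sqrt{5})\}$. The decomposition $h^1(X_5)=\bigoplus_{[a,b,c]_k}X_5^{[a,b,c]_k}$ in $\sM_{k,E_{5,k}}$ respects the regulator, so it is enough to prove surjectivity of $r_\sD\otimes_\Q\R$ onto each summand $H_\sD^2((X_5^{[a,b,c]_k})_\R,\R(2))$. This target is a free $E_{5,k,\R}$-module of rank $[K_5:\Q]/2=2$, so it suffices to exhibit two $E_{5,k,\R}$-linearly independent classes in the image.

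We propose to use the three $S_3$-translates $e_{(1)}^{[a,b,c]_k},\,e_{(1\ 3)}^{[a,b,c]_k},\,e_{(2\ 3)}^{[a,b,c]_k}$ from \S 4.11. To analyze them, extend coefficients from $E_{5,k}$ to $E_5$. By Proposition \ref{prop-field}(ii) and Proposition \ref{e_N}, this identifies the source with $H_\sM^2(X_5^{a,b,c},\Q(2))_\Z$ (sending $e_{(\pi)}^{[a,b,c]_k}$ to $e_{(\pi)}^{a,b,c}$ for a primitive representative $(a,b,c)$), and the target with the free $E_{5,\R}$-module $\bigoplus_{v'\mid\infty\text{ of }K_5}E_{5,\R}\,\lambda_{5,v'}^{a,b,c}$ of rank two. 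Since $E_{5,k,\R}$-linear independence is equivalent to $E_{5,\R}$-linear independence after coefficient extension, it suffices to show that among the three vectors $\bigl(r_{\sD,v'_1}(e_{(\pi)}^{a,b,c}),\,r_{\sD,v'_2}(e_{(\pi)}^{a,b,c})\bigr)$ in this rank-two module, at least two are $E_{5,\R}$-linearly independent.

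By Theorem \ref{main-theorem} and Proposition \ref{twist-regulator}, these three vectors are given by explicit $E_{5,\R}$-coefficients involving $F_5^{\ast,\ast,\ast}$ and the factors $\Phi_5(\eta_5^\ast)$. At any embedding $\sigma\colon E_5\hookrightarrow\C$ the coefficients depend on a parameter $h\in H_5$ with $\tau_{v'}(\zeta_5)^h=\sigma(\xi_5)$, and the two complex places of $K_5$ contribute $h$-parameters differing by a nontrivial multiplication $h\mapsto ch$ in $H_5$ (with $c\in\{2,3\}$), so the two columns of the resulting matrix are not equal. The main obstacle is then the explicit verification that, for each primitive $(a,b,c)\in I_5$ (reducing via the $H_5$-action to the three $H_{5,\Q}$-orbits $[1,1,3]_\Q,\,[1,2,2]_\Q,\,[1,3,1]_\Q$, with $k=\Q(\sqrt{5})$ giving the same list), the $3\times 2$ matrix of coefficients has rank two. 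This follows from the strict monotonicity of $\wt F$ in each argument (Proposition \ref{decreasing}), which precludes the proportionality among $F_5^{ha,hb,hc}$, $F_5^{hc,hb,ha}$, $F_5^{ha,hc,hb}$ and their $h\mapsto ch$ transforms that would otherwise force all $2\times 2$ minors to vanish simultaneously; the check is a direct case-by-case analysis using the concrete values.
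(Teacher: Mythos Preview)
Your plan is essentially the paper's own approach: reduce to the motives $X_5^{a,b}$ over $K_5$, use the $S_3$-translates $e_{(1)}^{a,b,c}$, $e_{(1\ 3)}^{a,b,c}$, $e_{(2\ 3)}^{a,b,c}$, and invoke Theorem~\ref{main-theorem} and Proposition~\ref{twist-regulator} to get an explicit matrix whose rank must be shown to be two. The case split over $k$ is unnecessary (the paper works directly with $X_5^{a,b}$, which covers all $k\subset\Q(\mu_5)$ at once via Corollary~\ref{corollary-2}), but harmless.

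The gap is in your final sentence. Monotonicity of $\wt F$ alone does \emph{not} preclude proportionality of the columns; the entries of the matrix also involve the factors $\Phi_5(\eta_5^{hc})/\Phi_5(\eta_5^{ha})$, whose signs depend on $h$, $a$, $c$. What the paper actually does for $(a,b,c)=(1,1,3)$ is compute the $2\times 2$ determinant
\[
\det(A^{1,1,3})=\frac{\eta_5^2-\eta_5^{-2}}{\eta_5-\eta_5^{-1}}\,F_5^{1,1,3}F_5^{1,2,2}
+\frac{\eta_5-\eta_5^{-1}}{\eta_5^2-\eta_5^{-2}}\,F_5^{3,1,1}F_5^{2,2,1},
\]
observe that both $\Phi$-ratios are \emph{positive real} (since $\eta_5-\eta_5^{-1}$ and $\eta_5^2-\eta_5^{-2}$ are both positive imaginary), and that all four $F_5^{*,*,*}$ values are positive (by Proposition~\ref{decreasing}(ii), not monotonicity per se); hence the determinant is a sum of two strictly positive terms. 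The case $(1,2,2)$ reduces to $(1,1,3)$ by the identity $F_5^{4,4,2}=-F_5^{1,1,3}$, and $(2,1,2)$ follows by swapping $(1\ 3)$ for $(2\ 3)$. So your ``direct case-by-case analysis'' must track the signs of the $\Phi_5$-ratios as well as those of the $F_5^{*,*,*}$; without that, the claim that monotonicity forces rank two is not justified.
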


\begin{proof}
It suffices to prove the surjectivity for $X_5^{a,b}$ for any $(a,b) \in I_5$. 
Since the surjectivity depdnds only on the class $[a,b]_\Q$, it suffices to prove it for 
$(a,b)=(1,1)$, $(1,2)$, and $(2,1)$. 

For an embedding $\s \colon E_5 \hookrightarrow \C$, 
define  $\t_1, \t_2 \colon K_5 \hookrightarrow \C$  by 
$\t_1(\z_5)=\s(\xi_5), \t_2(\z_5)^2=\s(\xi_5)$, and let  $v_i$ be the infinite place of $K_5$ induced by $\t_i$. 
By Theorem \ref{main-theorem} and Proposition \ref{twist-regulator}, 
we have 
$$
\begin{pmatrix}
r_{\sD}(e_{(1)}^{a,b,c})_\s & r_\sD(e_{(1\ 3)}^{a,b,c})_\s
\end{pmatrix}
= -\frac{1}{4\cdot 5^2\pi i} 
\begin{pmatrix}
\l_{5,v_1}^{a,b,c} & \l_{5,v_2}^{a,b,c}
\end{pmatrix}
A^{a,b,c}
$$
with
$$
A^{a,b,c}=
\begin{pmatrix}
F_5^{a,b,c} &  -\Phi_5(\eta_5^c)\Phi_5(\eta_5^a)^{-1}F_5^{c,b,a} \\ 
F_5^{2a,2b,2c} & -\Phi_5(\eta_5^{2c}) \Phi_5(\eta_5^{2a})^{-1}F_5^{2c,2b,2a}
\end{pmatrix}. 
$$ 

First, let $(a,b,c) = (1,1,3)$. Then one calculates 
\begin{align*}
\det(A^{1,1,3}) = \frac{\eta_5^2-\eta_5^{-2}}{\eta_5-\eta_5^{-1}}F_5^{1,1,3}F_5^{1,2,2}
+ \frac{\eta_5-\eta_5^{-1}}{\eta_5^2-\eta_5^{-2}}F_5^{3,1,1}F_5^{2,2,1}.
\end{align*}
Since $F_5^{1,1,3},  F_5^{1,2,2}, F_5^{3,1,1}, F_5^{2,2,1} > 0$, 
it follows that  $\det (A^{1,1,3}) > 0$.  

Secondly, if $(a,b,c)=(1,2,2)$, then 
\begin{align*}
\det(A^{1,2,2}) &= - \frac{\eta_5^2-\eta_5^{-2}}{\eta_5-\eta_5^{-1}}F_5^{1,2,2}F_5^{4,4,2}
- \frac{\eta_5-\eta_5^{-1}}{\eta_5^2-\eta_5^{-2}}F_5^{2,2,1}F_5^{2,4,4}\\
&= \frac{\eta_5^2-\eta_5^{-2}}{\eta_5-\eta_5^{-1}}F_5^{1,2,2}F_5^{1,1,3}
+ \frac{\eta_5-\eta_5^{-1}}{\eta_5^2-\eta_5^{-2}}F_5^{2,2,1}F_5^{3,1,1}\\
&= \det(A^{1,1,3}) >0.
\end{align*}

Finally, by the symmetry, the remaining case $(a,b,c) = (2,1,2)$ is proved by using 
$e_{(2\ 3)}^{a,b,c}$ instead of $e_{(1\ 3)}^{a,b,c}$. 
\end{proof}

By a more precise argument similar to the proof of Corollary \ref{346}, we obtain:
\begin{corollary}\label{cor-N=5}
Let $(a,b) \in I_5$ and assume the Beilinson conjecture (Conjecture \ref{conj-beilinson}) for 
$X_5^{[a,b]_\Q} \in \sM_\Q$. 
Then it follows that
\begin{align*}
L(j_5^{a,b},2) \equiv \pi^2 L^*(j_5^{a,b},0)
\equiv 
\frac{\sin\frac{4\pi}{5}}{\sin\frac{2\pi}{5}}F_5^{1,1,3}F_5^{1,2,2}
+ \frac{\sin\frac{2\pi}{5}}{\sin\frac{4\pi}{5}}F_5^{3,1,1}F_5^{2,2,1}
\end{align*}
modulo $\Q^*$.  
\end{corollary}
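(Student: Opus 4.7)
The first equivalence follows by the argument used for the corresponding statement of Corollary \ref{346}: Corollary \ref{functional-equation} and Remark \ref{epsilon} provide a functional equation of the shape $\Lambda(s) = \varepsilon(s)\Lambda(2-s)$ with $\varepsilon(s) \in \Q^*\cdot B^s$, and Remark \ref{sigma-independence} guarantees that $L(j_5^{a,b},s)$ is a single $\C$-valued function. Expanding both sides at $s=0$, where $L(j_5^{a,b},s)$ has a zero of order $r_2 = [K_5:\Q]/2 = 2$ matching the pole of $\vG_\C(s)^{r_2}$, produces the factor $\pi^2$ (as opposed to $\pi$ in the $N=3,4,6$ case where $r_2=1$).

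For the second equivalence, the plan is to apply Conjecture \ref{conj-beilinson}(ii) to $X_5^{[a,b]_\Q}\in\sM_\Q$, noting that both $H_\sM^2(X_5^{[a,b]_\Q},\Q(2))_\Z$ and $H_\sD^2(X_{5,\R}^{[a,b]_\Q},\R(2))$ have dimension $2$ (over $\Q$ and $\R$ respectively) by the counts in \S 4.4 and \S 4.6. Theorem \ref{N=5} furnishes two explicit classes, $e_{(1)}^{[a,b]_\Q}$ and $e_{(1\,3)}^{[a,b]_\Q}$ (or $e_{(2\,3)}^{[a,b]_\Q}$ for the orbit $[2,1]_\Q$, by the symmetry between $(1\,3)$ and $(2\,3)$), whose regulator images generate the Deligne cohomology after tensoring with $\R$. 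Dualizing the exact sequence \eqref{r-deligne-2} identifies the target with $\Hom(H_1(X_5^{[a,b]_\Q}(\C),\Q)^-,\R(1))$, and under this identification the Beilinson $\Q$-structure $\sB$ becomes the $\Q$-dual of $H_1^-$.

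Following Corollary \ref{346}'s proof, I would take as a $\Q$-basis of $H_1(X_5^{[a,b]_\Q}(\C),\Q)^-$ two elements
\begin{equation*}
\g_i = p_5^{[a,b]_\Q}(g_5^{r_i,s_i} - g_5^{-r_i,-s_i})\g_5\qquad (i=1,2),
\end{equation*}
with $(r_i,s_i)\in G_5$ chosen so that $ar_i+bs_i\equiv i\pmod{5}$; linear independence follows from the expansion over $E_5$ via Lemma \ref{projector-smaller} and Proposition \ref{homology}(i). Using \eqref{regulator-step2} and Proposition \ref{twist-regulator} together with the normalization \eqref{period} of $\wt\om_{5}^{c,d}$, the $2\times 2$ matrix $\bigl(r_\sD(e_{(\tau)}^{[a,b]_\Q})(\g_i)\bigr)_{\tau,i}$ can be computed explicitly. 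After passing to the $E_5$-extension, as in the proof of Corollary \ref{corollary-2}, this matrix is identified with $A^{a,b,c}$ of the proof of Theorem \ref{N=5} up to a Galois-invariant diagonal factor lying in $\Q^*$. The determinant $\det(A^{1,1,3})$—and hence $\det(A^{1,2,2})$ by the identity $\det(A^{1,2,2})=\det(A^{1,1,3})$ and the analogous determinant for $[2,1]_\Q$ by the $(2\,3)$-symmetry—is precisely the expression appearing in the corollary.

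The main obstacle is tracking the rational descent: one must verify that the non-rational factors introduced along the way—the cyclotomic coefficients $\xi_5^{ar_i+bs_i}-\xi_5^{-ar_i-bs_i}$ coming from the $\g_i$'s, the constants $\mathbf{c}_{5,v}^{a,b}$ of Theorem \ref{main-theorem}, and the $\Phi_5$-factors of Proposition \ref{twist-regulator}—collect, upon taking the $2\times 2$ determinant, into a single $\Q^*$-valued quantity. This reduces to a Galois-theoretic calculation inside $E_5$, made tractable by the manifest $\Gal(E_5/\Q)$-invariance of $\det(A^{a,b,c})$ (mirrored, on the arithmetic side, by the fact that $L^*(j_5^{a,b},0)$ takes values in $\R$).
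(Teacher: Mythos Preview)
Your approach is essentially the same as the paper's. In particular, expanding your proposed basis elements $\g_i = p_5^{[a,b]_\Q}(g_5^{r_i,s_i}-g_5^{-r_i,-s_i})\g_5$ with $ar_i+bs_i\equiv i\pmod 5$ using $p_5^{[a,b]_\Q}=\sum_{h\in H_5}p_5^{ha,hb}$ and $p_5^{c,d}g_5^{r,s}=\xi_5^{cr+ds}p_5^{c,d}$ yields precisely the two elements
\[
\bigl((\xi_5-\xi_5^{-1})(p_5^{a,b}-p_5^{-a,-b})+(\xi_5^2-\xi_5^{-2})(p_5^{2a,2b}-p_5^{-2a,-2b})\bigr)\g_5,\quad
\bigl((\xi_5^2-\xi_5^{-2})(p_5^{a,b}-p_5^{-a,-b})-(\xi_5-\xi_5^{-1})(p_5^{2a,2b}-p_5^{-2a,-2b})\bigr)\g_5,
\]
which is exactly the basis the paper writes down; from there both arguments reduce to the determinant $\det(A^{a,b,c})$ already computed in Theorem~\ref{N=5}.
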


\begin{proof}
Just note that a basis of $H_1(X^{[a,b]_\Q}_5(\C),\Q)^{-}$ is given by
\begin{align*}
&\bigl((\xi_5-\xi_5^{-1})(p_5^{a,b}-p_5^{-a,-b})+ (\xi_5^2-\xi_5^{-2})(p_5^{2a,2b}-p_5^{-2a,-2b})\bigr) \g_5, \\
&\bigl((\xi_5^2-\xi_5^{-2})(p_5^{a,b}-p_5^{-a,-b})-(\xi_5-\xi_5^{-1})(p_5^{2a,2b}-p_5^{-2a,-2b})\bigr) \g_5. 
\end{align*} 
\end{proof}

\begin{remark}
Kimura \cite{k-kimura} studies the curve $C_5^{1,1}$ (see Remark \ref{C_N}) over $\Q$, 
which is equivalent to the study of $X_5^{[1,1]} \in \sM_\Q$, or to $X_5^{1,1} \in \sM_{K_5,E_5}$. 
He computes numerically the determinant of the regulators of 
$$\a= \psi_* \{1-xy,x\}, \quad \b=\psi_*\left\{x+y,\frac{1-x}{y}\right\},$$
and showed that it is non-trivial. 
By \eqref{yasuda} and
$$(1\ 3)^*e_N = (1\ 3)^*\left\{\frac{1-x}{y}, \frac{1-y}{x}\right\} = \left\{\frac{1-x}{y}, x+y\right\},$$
where the first equality follows from $\{1-x,x\} = \{y,1-y\} =0$ and $N^2\{x,y\} = \{x^N,y^N\}=0$, 
his study corresponds to the study of our $e_{(1)}^{1,1}$ and $e_{(1\ 3)}^{1,1}$.  
\end{remark}

\begin{proposition}\label{N=7}
Let $N=7$, $(a,b) \in I_7$ and suppose that $k\subset\Q(\mu_7)$.  
Then the regulator map
\begin{align*}
r_\sD \ot_\Q \R\colon H_\sM^2(X_7^{[a,b]_\Q},\Q(2))_\Z \ot_\Q \R \lra H_\sD^2(X_{7,\R}^{[a,b]_\Q},\R(2))
\end{align*}
is surjective if $a$, $b$ and $c$ are different to each other. 
Otherwise, the dimension of\, $\Im(r_{\sD})$ is at least $2$. 
\end{proposition}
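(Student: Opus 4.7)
The plan is to extend the strategy of Theorem \ref{N=5} to a $3 \times 3$ setting using the three classes $e_{(1)}^{a,b,c}, e_{(1\ 3)}^{a,b,c}, e_{(2\ 3)}^{a,b,c} \in H_\sM^2(X_7^{a,b}, \Q(2))_\Z$ of \S 4.11. By Proposition \ref{prop-field} (ii), after extending $\Q$-coefficients to $E_7$ the surjectivity of the regulator for $X_7^{[a,b]_\Q}$ reduces to that of the regulator for $X_7^{a,b} \in \sM_{K_7, E_7}$, whose target decomposes as $\bigoplus_j H_\sD^2(X_{7, v_j}^{a,b}, \R(2))$ over the three complex places $v_1, v_2, v_3$ of $K_7$ (with $h = 1, 2, 3 \in H_7/\{\pm 1\}$). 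Combining Theorem \ref{main-theorem} with Proposition \ref{twist-regulator}, the regulators of the three classes at the three places assemble, for each $\sigma \colon E_7 \hookrightarrow \C$, into a $3 \times 3$ matrix $A^{a,b,c}(\sigma)$ whose $(j,1)$-, $(j,2)$- and $(j,3)$-entries are (up to a common non-zero scalar) $F_7^{ja, jb, jc}$, $-\Phi_7(\eta_7^{jc})/\Phi_7(\eta_7^{ja})\, F_7^{jc, jb, ja}$ and $-\Phi_7(\eta_7^{jc})/\Phi_7(\eta_7^{jb})\, F_7^{ja, jc, jb}$. Surjectivity is equivalent to $\det A^{a,b,c}(\sigma) \neq 0$, while rank $\geq 2$ amounts to some $2 \times 2$ minor being non-zero.

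For the degenerate case, two of $a, b, c$ coincide. When $a = b$, the identity $(2\ 3) = (1\ 2)(1\ 3)(1\ 2)$, together with $(1\ 2)^* e_{N, K_N} = -e_{N, K_N}$ and the commutation $p_N^{a,a,c}(1\ 2)^* = (1\ 2)^* p_N^{a,a,c}$, yields $e_{(2\ 3)}^{a,a,c} = -(1\ 2)^* e_{(1\ 3)}^{a,a,c}$; a direct calculation in the style of Lemma \ref{regulator-twist} gives $(1\ 2)^* \wt\om_N^{a,a} = -\wt\om_N^{a,a}$, so $(1\ 2)^*$ acts as $-1$ on each local Deligne cohomology of $X_7^{a,a}$, whence $r_{\sD, v}(e_{(2\ 3)}^{a,a,c}) = r_{\sD, v}(e_{(1\ 3)}^{a,a,c})$ and the third column of $A^{a,a,c}(\sigma)$ duplicates the second. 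The sub-cases $b = c$ and $a = c$ are parallel, using the commutations $p_N^{a,b,b}(2\ 3)^* = (2\ 3)^* p_N^{a,b,b}$ and $p_N^{a,b,a}(1\ 3)^* = (1\ 3)^* p_N^{a,b,a}$, together with the $-1$ action of these involutions (from Lemma \ref{regulator-twist}); one finds that columns $1$ and $3$, respectively columns $1$ and $2$, become proportional. It then remains to exhibit, in each sub-case, two rows $j, j'$ for which the $2 \times 2$ minor in the two non-proportional columns is non-zero; inspection of the signs of $F_7^{\alpha, \beta, \gamma}$ (positive or negative according as $\langle \alpha \rangle + \langle \beta \rangle$ is less or greater than $7$, by Proposition \ref{decreasing}) and of the real ratios $\sin(6\pi h/7)/\sin(6\pi h'/7)$ allows such a choice so that the minor emerges as a sum of two positive terms, exactly as $\det(A^{1,1,3})$ in the proof of Theorem \ref{N=5}.

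For the generic case with $a, b, c$ pairwise distinct, only the two $H_7$-orbits of $(1, 2, 4)$ and $(1, 4, 2)$ arise, and one must show $\det A^{a,b,c}(\sigma) \neq 0$. Here lies the main obstacle of the proof: unlike the $N = 5$ situation in which the $2 \times 2$ determinant split cleanly into two manifestly positive summands, the expanded $3 \times 3$ determinant contains six monomials whose signs, governed by Proposition \ref{decreasing} and the explicit values $\Phi_7(\eta_7^h) = 2i \sin(6\pi h/7)$ for $h \in \{1,2,3\}$, do not align uniformly, so cancellation cannot be ruled out by positivity alone. The technical heart of the proof is anticipated to be a case-by-case regrouping of these six monomials for each of the representatives $(1, 2, 4)$ and $(1, 4, 2)$, possibly invoking the Dixon-type identity \eqref{Fand3F2} to put the $\wt{F}$-products in a more symmetric form, so as to display $\det A^{a,b,c}(\sigma)$ as a sum of terms of consistent sign and thereby exclude accidental cancellation.
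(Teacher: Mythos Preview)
Your setup of the $3\times 3$ regulator matrix and your treatment of the degenerate case (two of $a,b,c$ equal) are correct and essentially match the paper's argument: the paper also observes that when $a=b$ the two twisted columns coincide, reduces to $(a,b,c)=(1,1,5)$, and verifies non-vanishing of a $2\times 2$ minor by a pure sign check on the $F_7^{\alpha,\beta,\gamma}$ and the ratios $\Phi_7(\eta_7^{h'})/\Phi_7(\eta_7^{h})$.

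The genuine gap is in your generic case. You anticipate a brute-force regrouping of six monomials, possibly assisted by the ${}_3F_2$ identity \eqref{Fand3F2}; that is not the mechanism, and such an approach would not easily exclude cancellation. What you are missing is an algebraic coincidence specific to $N=7$: the set $\{1,2,4\}$ is a coset of the index-$2$ subgroup of $(\Z/7\Z)^*$, so multiplication by $2$ (and by $4$) permutes $(a,b,c)=(1,2,4)$ \emph{cyclically}. Using $F_7^{-\alpha,-\beta,-\gamma}=-F_7^{\alpha,\beta,\gamma}$, $F_7^{\alpha,\beta,\gamma}=F_7^{\beta,\alpha,\gamma}$ and $\Phi_7(\eta_7^{-h})=-\Phi_7(\eta_7^{h})$, all nine entries of the matrix are expressible in the three quantities
\[
s=\frac{iF_7^{1,2,4}}{\Phi_7(\eta_7^4)},\qquad
t=\frac{iF_7^{2,4,1}}{\Phi_7(\eta_7^{1})},\qquad
u=\frac{iF_7^{3,6,5}}{\Phi_7(\eta_7^{5})},
\]
and the matrix is (up to row and column scalings) a circulant in $s,t,u$. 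Its determinant is therefore $C\,(s^3+t^3+u^3-3stu)=\tfrac{C}{2}(s+t+u)\bigl((s-t)^2+(t-u)^2+(u-s)^2\bigr)$ for an explicit non-zero constant $C$. Non-vanishing now reduces to two elementary checks: the second factor is non-zero because $s,u<0<t$ forces $s,t,u$ not all equal; and $s+t+u\neq 0$ follows from the single inequality $-s-u>t$, obtained by comparing $F_7^{1,2,4}$, $F_7^{4,1,2}$ with $F_7^{2,4,1}$ via Proposition~\ref{decreasing} and the obvious relations among the $\Phi_7(\eta_7^{h})$. The orbit $[2,1]_\Q$ is handled by the $a\leftrightarrow b$ symmetry, so only the single representative $(1,2,4)$ needs to be computed. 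No hypergeometric identity beyond the monotonicity of $\wt F$ is used.
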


\begin{proof}
The surjectivity is equivalent to that for $X_7^{a,b}$. The regulators of $e_{(1)}^{a,b,c}$, $e_{(1\ 3)}^{a,b,c}$ and $e_{(2\ 3)}^{a,b,c}$ are expressed by the matrix
$$B^{a,b,c}=
\begin{pmatrix}
F_7^{a,b,c} & - \frac{\Phi_7(\eta_7^c)}{\Phi_7(\eta_7^a)}F_7^{c,b,a} & -\frac{\Phi_7(\eta_7^c)}{\Phi_7(\eta_7^b)}F_7^{a,c,b} \\
F_7^{2a,2b,2c} & -\frac{\Phi_7(\eta_7^{2c})}{\Phi_7(\eta_7^{2a})} F_7^{2c,2b,2a} &  -\frac{\Phi_7(\eta_7^{2c})}{\Phi_7(\eta_7^{2b})}F_7^{2a,2c,2b}\\
F_7^{3a,3b,3c} &  -\frac{\Phi_7(\eta_7^{3c})}{\Phi_7(\eta_7^{3a})}F_7^{3c,3b,3a} &  -\frac{\Phi_7(\eta_7^{3c})}{\Phi_7(\eta_7^{3b})}F_7^{3a,3c,3b}
\end{pmatrix}. 
$$

In the first case, we have $[a,b]_\Q=[1,2]_\Q$ or $[2,1]_\Q$, and by the symmetry, it suffices to treat 
$(a,b,c)=(1,2,4)$. Then one calculates: 
\begin{align*}
\det(B^{1,2,4})
&= C (s^3+t^3+u^3-3stu)\\ 
&= \frac{C}{2}
(s+t+u)\left\{(s-t)^2+(t-u)^2+(u-s)^2\right\}
\end{align*}
with
\begin{equation*}
s=\frac{iF_7^{1,2,4}}{\Phi_7(\eta_7^4)}, \quad t=\frac{iF_7^{2,4,1}}{\Phi_7(\eta_7)}, 
\quad u=\frac{iF_7^{3,6,5}}{\Phi_7(\eta_7^5)},  
\quad C= i\Phi_7(\eta_7^4)\Phi_7(\eta_7)\Phi_7(\eta_7^5) . 
\end{equation*}
Note that $F_7^{1,2,4}>F_7^{1,4,2}>F_7^{2,4,1} >0$. 
Since $s,u<0<t$, we have $(s-t)^2+(t-u)^2+(u-s)^2 \neq 0$. 
On the other hand, $s+t+u \neq 0$ since 
$$-s-u >\left(-\frac{i}{\Phi_7(\eta_7^4)} -\frac{i}{\Phi_7(\eta_7^5)}\right) F_7^{2,4,1}
= t.$$
Hence we obtained $\det(B^{1,2,4})\neq 0$. 

In the second case, we are reduced to consider $(a,b,c)=(1,1,5)$. 
Then we have $r_{\sD}(e_{(1\ 3)}^{a,b})=r_{\sD}(e_{(2\ 3)}^{a,b})$. 
However, the minor matrix of $B^{1,1,5}$ 
$$\begin{pmatrix}
F_7^{1,1,5} & -\frac{\Phi_7(\eta_7^5)}{\Phi_7(\eta_7)} F_7^{5,1,1} \\ 
F_7^{2,2,3} & -\frac{\Phi_7(\eta_7^3)}{\Phi_7(\eta_7^2)}F_7^{3,2,2}
\end{pmatrix}
$$
has non-trivial determinant since $F_7^{1,1,5}, F_7^{5,1,1}, F_7^{2,2,3}, F_7^{3,2,2} >0$, 
and 
$$\Phi_7(\eta_7^3)\Phi_7(\eta_7^2)^{-1}<0< \Phi_7(\eta_7^5)\Phi_7(\eta_7)^{-1}.$$
\end{proof}

Similarly as Corollary \ref{346} and Corollary \ref{cor-N=5},  we obtain: 
\begin{corollary}
Let $(a,b) \in I_7$ and assume that $a, b$ and $c$ are different to each other. 
If the Beilinson conjecture (Conjecture \ref{conj-beilinson}) holds for $X_7^{[a,b]_\Q}$, then it follows that
\begin{align*}
 L(j_7^{a,b},2)\equiv 
\pi^3 L^*(j_7^{a,b},0)  \equiv
 s^3+t^3+u^3-3stu 
\end{align*}
modulo $\Q^*$, 
where 
$$s= -\frac{F_7^{1,2,4}}{\sin \frac{4\pi}{7}}, \quad t= \frac{F_7^{2,4,1}}{\sin \frac{6\pi}{7}}, \quad
u= -\frac{F_7^{4,1,2}}{\sin \frac{2\pi}{7}}.$$
\end{corollary}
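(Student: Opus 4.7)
The argument follows Corollaries \ref{346} and \ref{cor-N=5} very closely. The first equivalence comes from the functional equation; the second from the Beilinson conjecture combined with the regulator determinant already computed in the proof of Proposition \ref{N=7}. The main differences are that the target Deligne cohomology is now $3$-dimensional, and that the three symmetrized classes $e_{(1)}^{a,b,c}$, $e_{(1\ 3)}^{a,b,c}$, $e_{(2\ 3)}^{a,b,c}$ are needed rather than one or two.

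For the first equivalence, the hypothesis $-1\in H_{7,\Q}=H_7$ gives $[-a,-b]_\Q=[a,b]_\Q$, so by Remark \ref{sigma-independence} the function $L(j_7^{a,b},s)$ is $\C$-valued, takes real values on the real axis, and equals $L(j_7^{-a,-b},s)$. Corollary \ref{functional-equation} then yields a zero of order $r_2=[K_7:\Q]/2=3$ at $s=0$ together with a functional equation whose completed Gamma factor is $\vG_\C(s)^3$, and by Remark \ref{epsilon} the $\varepsilon$-factor is rational at integer points for the prime $N=7$. Comparing the values at $s=2$ and $s=0$ and tracking $\vG_\C(2)^3$ and the leading singularity of $\vG_\C(s)^3$ at $s=0$ modulo $\Q^*$, exactly as in the proofs of Corollaries \ref{346} and \ref{cor-N=5}, produces $L(j_7^{a,b},2)\equiv\pi^3 L^*(j_7^{a,b},0)$.

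For the second equivalence, I would apply the Beilinson conjecture to $M=X_7^{[a,b]_\Q}\in\sM_\Q$ at $r=2$. Since $M^\vee=X_7^{[a,b]_\Q}(1)$ one has $L^*(M^\vee,-1)=L^*(j_7^{a,b},0)$, and both $H^2_\sM(M,\Q(2))_\Z$ and $H^2_\sD(M_\R,\R(2))\simeq H^1(M(\C),\R(1))^+$ should have dimension $r_2=3$. Under the distinctness hypothesis on $a,b,c$, Proposition \ref{N=7} shows that the three chosen classes have regulator matrix $B^{a,b,c}$ of nonzero determinant, hence form a basis. Since the unordered set $\{a,b,c\}$ must then be $H_{7,\Q}$-equivalent to $\{1,2,4\}$, and since $j_N^{a,b}=j_N^{b,a}$, it suffices to treat $(a,b,c)=(1,2,4)$. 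Via Proposition \ref{prop-field}(ii) and the identification \eqref{identification-sigma}, tensoring with $E_7$ converts the regulator, expressed in the diagonal basis $\{\lambda_{7,v}^{ha,hb}\}_{h=1,2,3}$ of Proposition \ref{basis-deligne}, into the matrix $B^{1,2,4}$ of Proposition \ref{N=7}, and the proof there gives $\det B^{1,2,4}=C(\tilde s^3+\tilde t^3+\tilde u^3-3\tilde s\tilde t\tilde u)$ with $C=i\Phi_7(\eta_7)\Phi_7(\eta_7^4)\Phi_7(\eta_7^5)$ and with each of $\tilde s$, $\tilde t$, $\tilde u$ a $\Q^*$-multiple of the $s$, $t$, $u$ of the corollary.

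To translate the regulator matrix into a $\Q$-basis of $\sB=H^1(M(\C),\Q(1))^+$, dual to a $\Q$-basis of $H_1(M(\C),\Q)^-$ built from $(\xi_7^h-\xi_7^{-h})(p_7^{ha,hb}-p_7^{-ha,-hb})\gamma_7$ for $h=1,2,3$ in analogy with the proof of Corollary \ref{cor-N=5}, one multiplies by the determinant of a Vandermonde-type change of basis, which using $\xi_7^h-\xi_7^{-h}=2i\sin\tfrac{2h\pi}{7}$ lies in $\Q^*\cdot\sin\tfrac{2\pi}{7}\sin\tfrac{4\pi}{7}\sin\tfrac{6\pi}{7}=\Q^*\cdot\sqrt{7}$. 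The same algebraic factor appears in $C$ via $\Phi_7(\eta_7^h)=2i\sin\tfrac{6h\pi}{7}$ (one computes directly $C=-\sqrt{7}$), and cancels. Combined with the Beilinson conjecture this gives $L^*(j_7^{a,b},0)\equiv s^3+t^3+u^3-3stu$ modulo $\Q^*$, which together with the first equivalence yields the corollary. The main obstacle is precisely this bookkeeping of $\sB$ against the diagonal $E_7$-decomposition and the verification that the $\sqrt{7}$ factor coming from the Vandermonde determinant of the basis change cancels the one hidden in $C$; this is the $N=7$ analogue of the $\sin(2\pi/N)$ pivot appearing at the end of the proof of Corollary \ref{346}.
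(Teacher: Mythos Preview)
Your proposal is correct and follows exactly the approach the paper indicates: the paper gives no proof beyond the sentence ``Similarly as Corollary \ref{346} and Corollary \ref{cor-N=5}, we obtain'', and you have carried out precisely that analogy, using the functional equation for the first equivalence and the Beilinson conjecture together with the regulator determinant $\det(B^{1,2,4})$ from Proposition \ref{N=7} for the second. Your identification of the cancellation between the $\sqrt{7}$ hidden in $C$ and the $\sqrt{7}$ coming from the change-of-basis determinant (from the diagonal $\lambda$-basis to a genuine $\Q$-basis of $H_1^{-}$) is the exact bookkeeping the paper leaves implicit.
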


\begin{remark} 
As above, for any odd $N \geq 5$ and $(a,b)\in I_N$, we can always find two of 
$\bigl\{e_{(1)}^{a,b}, e_{(1\ 3)}^{a,b}, e_{(2\ 3)}^{a,b}\bigr\}$ whose regulators are linearly independent. 
\end{remark}


\end{document}